\newtheorem{prop}{Proposition}
\newcommand{\ls}{\leqslant}
\newcommand{\gs}{\geqslant}
\newcommand{\Id}{\mathrm{Id}}
\newcommand{\tr}{\mathrm{tr}\,}
\newcommand{\BVdo}{\mathrm{BV}_{\diamond,1}}
\newcommand{\TV}{\mathrm{TV}}
\newcommand{\BV}{\mathrm{BV}}
\newcommand{\abs}[1]{\left \lvert#1 \right\rvert}
\newcommand{\R}{\mathbb R}
\DeclareRobustCommand{\rchi}{{\mathpalette\irchi\relax}}
\newcommand{\irchi}[2]{\raisebox{\depth}{$#1\chi$}}
\newcommand{\dd}{\, \mathrm{d}}
\renewcommand{\d}{\mathrm{d}}
\newcommand{\eps}{\varepsilon}
\renewcommand{\div}{\operatorname{div}}
\newcommand{\per}{\operatorname{Per}}
\newcommand{\set}[1]{\left \{ #1 \right\} }
\newcommand\restr[2]{{
\left.\kern-\nulldelimiterspace #1 \vphantom{\big|} \right|_{#2}}}
\newtheorem{thm}{Theorem}
\newtheorem{lem}{Lemma}
\newtheorem*{claim}{Claim}
\newtheorem{definition}{Definition}
\newtheorem{problem}{Problem}
\theoremstyle{remark}
\newtheorem{rem}{Remark}
\date{}
\title{Critical yield numbers and limiting yield surfaces of particle arrays settling in a Bingham fluid}
\author[1]{Jos\'{e} A. Iglesias \thanks{jose.iglesias@ricam.oeaw.ac.at}}
\author[1]{Gwenael Mercier \thanks{gwenael.mercier@ricam.oeaw.ac.at}}
\author[1,2]{Otmar Scherzer \thanks{otmar.scherzer@univie.ac.at}}
\affil[1]{\small Johann Radon Institute for Computational and Applied Mathematics (RICAM) \protect\\ Austrian Academy of Sciences, Linz, Austria.}
\affil[2]{\small Computational Science Center, University of Vienna, Vienna, Austria.}
\begin{document}
\maketitle

\begin{abstract}
We consider the flow of multiple particles in a Bingham fluid in an anti-plane shear flow configuration. The limiting situation in which the internal and applied forces balance and the fluid and particles stop flowing, that is, when the flow settles, is formulated as finding the optimal ratio between the total variation functional and a linear functional. The minimal value for this quotient is referred to as the critical yield number or, in analogy to Rayleigh quotients, generalized eigenvalue. This minimum value can in general only be attained by discontinuous, hence not physical, velocities. However, we prove that these generalized eigenfunctions, whose jumps we refer to as limiting yield surfaces, appear as rescaled limits of the physical velocities. Then, we show the existence of geometrically simple minimizers. Furthermore, a numerical method for the minimization is then considered. It is based on a nonlinear finite difference discretization, whose consistency is proven, and a standard primal-dual descent scheme. Finally, numerical examples show a variety of geometric solutions exhibiting the properties discussed in the theoretical sections.
\end{abstract}
\section{Introduction}
In this article, we investigate the stationary flow of particles in a Bingham fluid. Such fluids are important examples of 
non-Newtonian fluids, describing for instance cement, toothpaste, and crude oil \cite{VinWacAga05}. 
They are characterized by two numerical quantities: a yield stress $\tau_Y$ that must be exceeded for strain to appear, 
and a fluid viscosity $\mu_f$ that describes its linear behaviour once it starts to flow (see figure \ref{fig:ystress}).

\begin{figure}[htbp]
\centering
 \includegraphics[width = 0.5\textwidth]{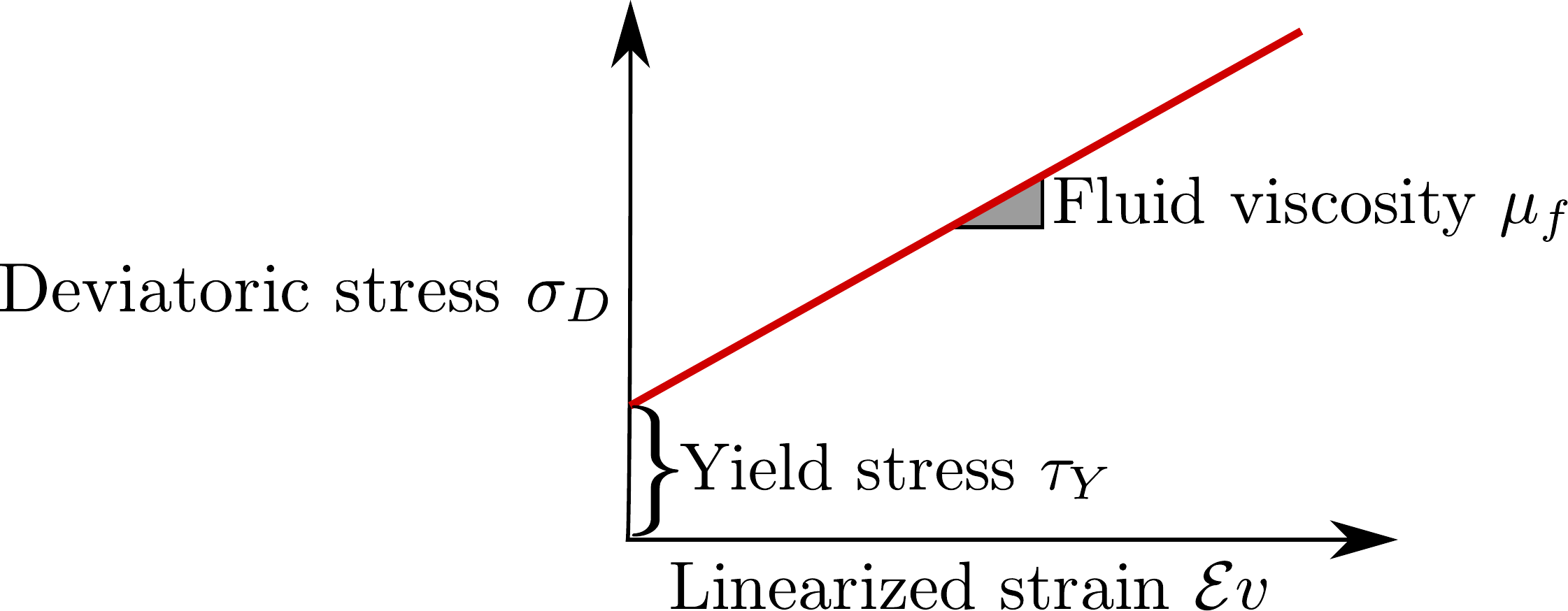}
 \caption{Relation between stress and strain in a Bingham fluid}
 \label{fig:ystress}
\end{figure}

An important property of Bingham fluid flows is the occurrence of plugs, which are regions where the fluid moves like a rigid body. Such rigid movements occur at positions where the stress does not exceed the yield stress. 

In this paper we consider anti-plane shear flow in an infinite cylinder, where an ensemble of inclusions move under their own weight inside a Bingham fluid of lower density, and in which the gravity and viscous forces are in equilibrium (cf \eqref{eq:balance}), therefore inducing a flow which is \emph{steady} or \emph{stationary}, that is, in which the velocity does not depend on time. For such a configuration, we are interested in determining the ratio between applied forces and the yield stress such that the Bingham fluid stops flowing completely. This ratio is called \emph{critical yield number}.

\paragraph*{Related work.}\mbox{}\\
To our knowledge, the first mathematical studies of critical yield numbers were conducted by
Mosolov \& Miasnikov \cite{MosMia65,MosMia66}, who also considered the anti-plane situation for flows inside a pipe. In particular, they discovered the geometrical nature of the problem and related the critical yield number to what in modern teminology is known as the Cheeger constant of the cross-section of the region containing the fluid. Very similar situations appear in the modelling of the onset of landslides \cite{Hild2002,Ionescu2005,Hassani2005}, where non-homogeneous coefficients and different boundary conditions arise. Two-fluid anti-plane shear flows that arise in oilfield cementing are studied in \cite{FriSch98,FriSch00}. Settling of particles under gravity, not necessarily in anti-plane configurations is also considered in \cite{JosMag01,PutFri10}. Finally, the previous work \cite{FriIglMerPoeSch17} also focuses in the anti-plane settling problem. There, the analysis is limited to the case in which all particles move with the same velocity and where the main interest is to extract the critical yield numbers from geometric quantities. In the current work we lift this restriction and focus on the calculations of the limiting velocities, also from a numerical point of view. Various applications of the critical yield stress of suspensions are pointed out in \cite[Section 4.3]{BalFriOva14}. On the numerical aspects, there are several methods available in the literature for the computation of limit loads \cite{CarComIonPey11} and Cheeger sets \cite{CarComPey09, CasFacMei09, BogBucFra18}, and both of these problems are closely related to ours, as we shall see below.

\paragraph*{Structure of the paper.}\mbox{}\\
We begin in Section \ref{sec:model} by recalling the mathematical models describing the stationary Bingham fluid flow in an anti-plane configuration, and an optimization formulation for determining the critical yield number. \\
Next, in Section \ref{sec:relax} we consider a relaxed formulation of this optimization problem, which is naturally set in spaces of functions of bounded variation, and show that the limiting velocity profile as the flow stops is a minimizer of this relaxed problem.\\
In Section \ref{sec:geomsols}, as in the case of a single particle \cite{FriIglMerPoeSch17}, we prove that there exists a minimizer that attains only two non zero velocity values.\\
Finally, in Section \ref{sec:numerics} we present a numerical approach to compute minimizers. This approach is based on the non-smooth convex optimization scheme of Chambolle-Pock \cite{ChaPoc11} and an upwind finite difference discretization \cite{ChaLevLuc11}. We prove the convergence of the discrete minimizers to continuous ones as the grid size decreases to zero. We then use this scheme to illustrate the theoretical results of Section \ref{sec:geomsols}.

\section{The model}\label{sec:model}
The constitutive law for an incompressible Bingham fluid in three dimensions is given by the von Mises criterion
\begin{equation}\label{eq:vonmises3D}
\left\{
\begin{aligned}
   \sigma_D &= \left(  \mu_f + \frac{ \tau_Y}{|\mathcal E v|} \right) \mathcal E v \quad &\text{if } & |\sigma_D| \gs \tau_Y, \\
  \mathcal E v &= 0 \quad &\text{if } &|\sigma_D| \ls \tau_Y,
\end{aligned}
\right.
\end{equation}
where $v$ is its velocity (for which incompressibility implies $\div v = 0$), and $\mathcal E v = (\nabla v + \nabla v^\top)/2$ is the linearized strain, $\nabla v \in \R^{3 \times 3}$ being the Jacobian matrix of the vector $v$. We denote by $\sigma_D$ the deviatoric part of the Cauchy stress tensor $\sigma(x,y,z) \in \R^{3 \times 3}_{\text{sym}}$, that is 
\begin{equation}\label{eq:devstr}\sigma = \sigma_D - p\,\Id, \end{equation} 
where $p$ is the pressure and $\tr \sigma_D = 0$. These equations state that as long as a certain stress is not reached, there is no response of the fluid (see Figure \ref{fig:ystress}).

The geometry we consider consists of a Bingham fluid filling a vertical cylindrical domain $\hat\Omega \times \R \subset \R^3$ and a solid inclusion $\hat\Omega_s  \times \R \subset  \hat\Omega \times \R$, where \[\hat \Omega_s = \bigcup_{i=1}^N \hat \Omega_s^i\] with $\hat\Omega_s^i \cap \hat\Omega_s^j = \emptyset$ and $\partial \hat\Omega \cap \partial \hat\Omega_s^i=\emptyset$, so that $\hat \Omega_s$ is composed of disconnected particles that do not touch the boundary of the domain. 
We denote by $\hat\Omega_f =  \hat\Omega \setminus  \hat\Omega_s$ the portion of the domain occupied by the fluid, and by $\rho_s, \rho_f$ the corresponding constant densities. We focus on a vertical stationary flow, meaning that the velocity is of the form $v=\hat\omega (0,0,1)^T$ and constant in time. Moreover, all quantities are invariant along the vertical direction, so we can directly consider a scalar velocity $\hat\omega:\hat\Omega \to \R$ ($\hat\omega$ is the velocity of the fluid on $\hat\Omega_f$ and of the solid in $\hat\Omega_s$), see Figure \ref{fig:antiplane}. For the rest of the article, the differential operators denoted by $\nabla$ and $\div$ are the two-dimensional ones.

Additionally to incompressibility, we consider the stronger condition of an \emph{exchange flow problem}, meaning that we require that the total flux across the horizontal slice is zero,
\begin{equation} \label{eq:exchange} 
\int_{\hat\Omega} \hat\omega = 0.
\end{equation}
 
A word on this condition is required. If the cylindrical domain was closed by a bottom fluid reservoir on which no-slip boundaries are assumed, one could use incompressibility, the divergence theorem and the boundary conditions to obtain \eqref{eq:exchange} in any horizontal plane. In our case, while not strictly consistent with an infinite cylinder, it is added as a modelling assumption, reflecting that the region of interest is far away from the bottom of the 3D domain. The same approximation has been used in previous works treating models of drilling and cementing of oil wells \cite{Fri98, FriSch98} and justified experimentally in \cite{HupHal07} with applications to magma in volcanic conduits.

In the anti-plane case, the Bingham constitutive law \eqref{eq:vonmises3D} can be written in terms of the vector of shear stresses $\hat \tau = (\sigma_{xz}, \sigma_{yz})$ to obtain
\begin{equation}\label{eq:vonmises2D}
\left\{
\begin{aligned}
   \hat\tau &= \left(  \mu_f + \frac{ \tau_Y}{|\nabla  \hat\omega|} \right) \nabla  \hat\omega \quad &\text{if } & \hat\tau \gs  \tau_Y, \\
  \nabla  \hat\omega &= 0 \quad &\text{if } &\hat\tau \ls  \tau_Y.
\end{aligned}
\right.
\end{equation}
Since the material occupying the region $\hat\Omega_s$ is perfectly rigid, the corresponding constitutive law is
\begin{equation}\label{eq:solid}
\nabla\hat\omega = 0 \quad  \text{on } \hat\Omega_s.
\end{equation}

Noting the decomposition of the stress tensor \eqref{eq:devstr}, the balance laws for the fluid and the solid particles then write
\begin{equation}
\label{eq:balance}
\left\{
\begin{aligned} 
 \div \hat\tau &= p_z - \rho_f  g \qquad \text{on } \hat\Omega_f, \\ 
 \int_{\partial \hat\Omega_s^i} \hat\tau \cdot n_f &+ \rho_s g \,|\hat\Omega_s^i|-b_i = 0,\\  
\end{aligned}
\right.
\end{equation}
with $p_z$ the pressure gradient along the vertical direction. The second equation in \eqref{eq:balance} expresses that for a steady fall motion, the gravity and buoyancy forces should be in equilibrium with the shear forces exterted by the fluid on each particle \cite{Wei72}. The buoyancy forces $b_i$ on each solid particle should be understood as resulting from Archimedes' principle and originating outside the region of interest, being exerted by the bottom reservoir of fluid. This interpretation implies that these forces are proportional to the volume of the solids and the vertical difference of pressure, a fact that we obtain as a consequence of the exchange flow condition in \eqref{eq:multipliers}. In this equation, $n_f$ is the exterior unit normal to $\partial \hat\Omega_f$, which at $\partial \hat \Omega _f \cap \partial \hat \Omega _s$ is the interior unit normal to $\partial \hat \Omega _s$.

These equations are complemented by the following boundary conditions: we assume that on the boundaries of $\hat\Omega$, we have a no-slip boundary condition
\begin{equation}\label{eq:bc1} \hat\omega = 0 \quad  \text{on } \partial  \hat\Omega, \end{equation}
and similarly we assume that $\hat \omega$ is continuous across the interface $\partial \hat\Omega_s$,
\begin{equation}\label{eq:transmission} [\omega]_{\partial\hat\Omega_s} = 0.\end{equation}

\begin{figure}[htbp]
    \begin{center}
		\includegraphics[width=.6\textwidth]{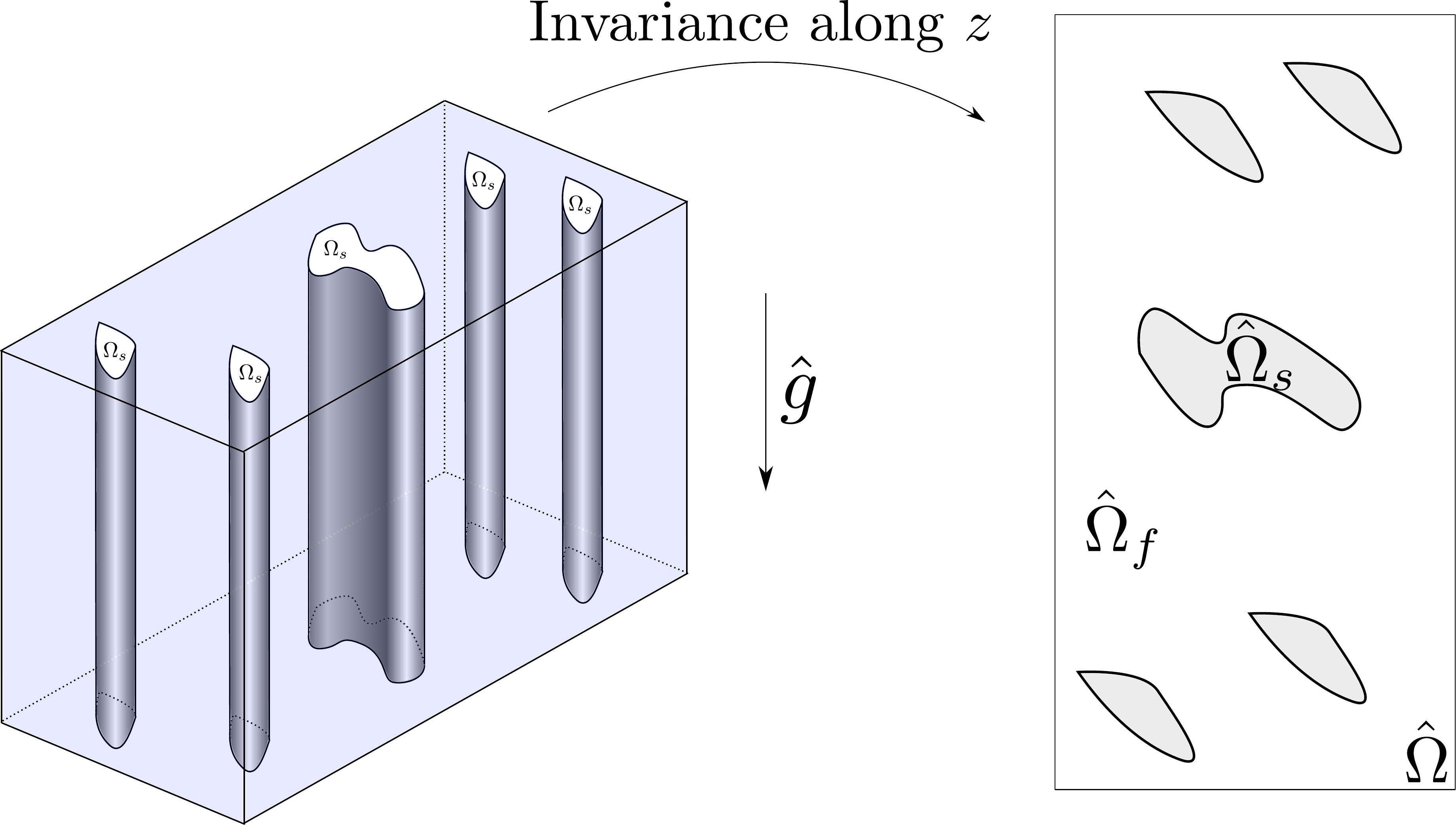}
    \end{center}
   \caption{Anti-plane situation: A falling cylinder, with gravity along its axis of symmetry.}
  \label{fig:antiplane}
\end{figure}

\subsection{Eigenvalue problems}
We assume that $\hat \Omega$ and $\hat \Omega_s$ are bounded and strongly Lipschitz, $\hat \Omega_s \subset \hat \Omega$, that $\partial \hat \Omega_s \cap \partial \hat \Omega = \emptyset$ and that $\hat \Omega_s$ has finitely many connected components. Following \cite{FriIglMerPoeSch17,PutFri10}, we introduce the functional 
\begin{equation}
\label{eq:Fhat}
\hat F(\hat\omega, m) := \left\{ \begin{aligned}
 &\frac{\mu_f}{2} \int_{\hat\Omega_f} |\nabla \hat\omega|^2 + \tau_Y \int_{\hat\Omega_f} |\nabla \hat\omega| - \rho_f \, g \int_{\hat \Omega_f} \hat\omega - \rho_s \, g \int_{\hat \Omega_s} \hat\omega + m \int_{\hat \Omega} \hat\omega &&\text{if } \hat\omega \in \hat H_\star\\
 &+\infty &&\text{else,}
\end{aligned}
\right.
\end{equation}
with the set of admissible velocities
\begin{equation}\label{eq:Hshat}
 \hat H_\star = \left\{ v \in H_0^1(\hat \Omega) \ \middle \vert \ \nabla v = 0 \text{ in } \hat\Omega_s \right\}.
\end{equation}
where the argument $m$ is a scalar multiplier for the exchange flow condition \eqref{eq:exchange}. Writing the Euler-Lagrange equations in the $\hat \omega$ argument at an optimal pair for the saddle point problem, we obtain a solution of our constitutive and balance equations \eqref{eq:vonmises2D} and \eqref{eq:balance}, with
\begin{equation}
\label{eq:multipliers}
p_z \equiv m \text{, and }b_i = p_z |\hat\Omega_s^i|. 
\end{equation}
Notice that since we work in $\hat H_\star$, the no-slip boundary condition \eqref{eq:bc1} and solid constitutive law \eqref{eq:solid} are automatically satisfied, and adequate testing directions are constant on connected components of $\hat \Omega_s$, which leads to the force balance condition in the second part of \eqref{eq:balance}. Condition \eqref{eq:transmission} is implied (in an appropriate weak form) by the fact that $\hat \omega \in H^1(\hat \Omega)$.

Since $\hat F$ is convex in its first argument and concave on the second, we can introduce the integral constraint in the space, and focus on the equivalent formulation of finding minimizers of
\begin{equation}
\label{eq:Ghat}
\hat G^\diamond(\hat\omega) := \left\{ \begin{aligned}
 &\frac{\mu_f}{2} \int_{\hat\Omega_f} |\nabla \hat\omega|^2 + \tau_Y \int_{\hat\Omega_f} |\nabla \hat\omega|- (\rho_s - \rho_f)\, g \int_{\hat \Omega_s} \hat\omega &&\text{if } \hat\omega \in \hat H_\diamond \\
 &+\infty &&\text{else,}
\end{aligned}
\right.
\end{equation}
over
\begin{equation}\label{eq:Hdhat}
 \hat H_\diamond = \left\{ v \in H_0^1(\hat \Omega) \ \middle \vert \ \int_{\hat\Omega} v =0, \ \nabla v = 0 \text{ in } \hat\Omega_s \right\}.
\end{equation}

We proceed to simplify the dimensions in the above functional, so that we can work with just one parameter. Assuming a given length scale $\hat L$, we define the buoyancy number $Y$ and a velocity scale $\hat \omega_0$ by
\begin{equation}\label{eq:scales}Y:=\frac{\tau_Y}{(\rho_s - \rho_f)g \hat L},\; \hat\omega_0:=\frac{(\rho_s - \rho_f)g\hat L^2}{\mu_f},\end{equation}
so that defining the rescaled velocity $\omega$ and corresponding domains by
\begin{equation}\label{eq:newvars}\omega(x):=\frac{\hat \omega(\hat L x)}{\hat \omega_0},\; \Omega := \frac{\hat \Omega}{\hat L},\; \Omega_f := \frac{\hat \Omega_f}{\hat L},\text{ and } \Omega_s := \frac{\hat \Omega_s}{\hat L},\end{equation}
we end up with the functional
\begin{equation}
\label{eq:GY}
G_Y^\diamond(\omega) := \left\{ \begin{aligned}
 &\frac{1}{2} \int_{\Omega_f} |\nabla \omega|^2 + Y \int_{\Omega_f} |\nabla \omega|-\int_{\Omega_s} \omega &&\text{if } \omega \in H_\diamond \\
 &+\infty &&\text{else,}
\end{aligned}
\right.
\end{equation}
to be minimized over 
\begin{equation}\label{eq:Hd}
H_\diamond = \left\{ v \in H_0^1( \Omega) \ \middle \vert \ \int_{\Omega} v =0, \ \nabla v = 0 \text{ in } \Omega_s \right\}.
\end{equation}
By the direct method it is easy to prove (see for instance \cite{FriIglMerPoeSch17}) that $G_Y^\diamond$ has a unique minimizer, which we denote by $\omega_Y$ and that corresponds to the weak solution of \eqref{eq:exchange}, \eqref{eq:vonmises2D}, \eqref{eq:solid}, \eqref{eq:balance}, \eqref{eq:bc1}, and \eqref{eq:transmission} in physical dimensions through the scaling in \eqref{eq:newvars}. Now, noticing that $u \mapsto Y\int_{\Omega_f} |\nabla u|-\int_{\Omega_s}u$ is convex, and that the G\^{a}teaux derivative of $u \mapsto \int_{\Omega_f} |\nabla u|^2$ at the point $\omega_Y$ in direction $h$ is $\int_{\Omega_f} \nabla \omega_Y \cdot \nabla h$, differentiating in the direction $v-\omega_Y$, as done in \cite[Section I.3.5.4]{DuvLio76} shows that for every $v \in H_\diamond$,
\begin{equation}
\int_{\Omega_f} \nabla \omega_Y \cdot  \nabla (v-\omega_Y) + Y \int_{\Omega_f} |\nabla v| - Y \int_{\Omega_f} |\nabla \omega_Y| \gs \int_{\Omega_s} (v - \omega_Y).
\label{eq:notEL}
\end{equation}

As in \cite{FriIglMerPoeSch17}, one can introduce 
\begin{equation}Y_c := \sup_{\omega \in H_\diamond} \frac{\int_{\Omega_s} \omega }{\int_{\Omega} | \nabla \omega|}\label{eq:eigenval} \end{equation} and test inequality \eqref{eq:notEL} with $v = 0$ and $v = 2\omega_Y$ to obtain
\begin{equation}\label{eq:zerograd1}
\int_\Omega |\nabla \omega_Y|^2 = \int_{\Omega_f} |\nabla \omega_Y|^2
= \int_{\Omega_s} \omega_Y - Y \int_{\Omega_f} |\nabla \omega_Y|.
\end{equation}
From this, and using the definition of $Y_c$ in \eqref{eq:eigenval} it follows that
\begin{equation}\label{eq:zerograd}
\int_\Omega |\nabla \omega_Y|^2 \ls   \int_{\Omega_f} |\nabla \omega_Y| \left[  \sup_{\omega \in H_\diamond}
\frac{\int_{\Omega_s} \omega }{\int_{\Omega} |\nabla \omega|} - Y \right]
= (Y_c-Y) \int_{\Omega_f} \abs{\nabla \omega_Y}.
\end{equation}
The last inequality implies, thanks to the homogeneous boundary conditions on $\omega$, that $\omega_Y = 0$ in $\Omega_f$ 
as soon as $Y \gs Y_c.$

\section{Relaxed problem and physical meaning}\label{sec:relax}
We determine the critical yield stress $Y_c$, defined in \eqref{eq:eigenval} and properties of the 
associated eigenfunction. The optimization problem \eqref{eq:eigenval} is equivalent to computing minimizers of the functional 
\begin{equation}
E(\omega):=\frac{\int_{\Omega} |\nabla \omega|}{\int_{\Omega_s} \omega} \text{ over } H_\diamond. 
\label{eq:buonum}
\end{equation}
Because $E$ might not attain a minimizer in $H_\diamond$, we consider a relaxed formulation on a subset 
of functions of bounded variation.

\subsection{Functions of bounded variations and their properties}
We recall the definition of the space of functions of bounded variation and some properties of such functions that we will use below. Proofs and further results can be found in \cite{AmbFusPal00}, for example.

\begin{definition}
Let $A \subset \R^2$ be open. A function $v \in L^1(A)$ is said to be of bounded variation
if its distributional gradient $\nabla v$ is a Radon measure with finite mass, which we denote by $\TV(v)$. In particular, if $\nabla v \in L^1(A)$, then $\TV(v)=\int_A \abs{\nabla v}$. Similarly, for a set $B$ with finite Lebesgue measure $|B| < +\infty$ we define its perimeter to be the total variation of its characteristic function $1_B$, that is, $\per(B)=\TV(1_{B})$.
\end{definition}

\begin{thm}
The space of functions of bounded variation on $A$, denoted $\BV(A)$, is a Banach space when associated with the norm
$$\Vert v \Vert_{\BV(A)} := \Vert v \Vert_{L^1(A)} + \TV(v)\;.$$
\end{thm}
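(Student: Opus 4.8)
The plan is to verify first that $\Vert \cdot \Vert_{\BV(A)}$ is a norm and then to establish completeness, both through the \emph{dual characterization} of the total variation. Recall that, equivalently to the definition via the mass of the distributional gradient, one has
\[
\TV(v) = \sup\set{ \int_A v \,\div \varphi \dd x \ : \ \varphi \in C_c^1(A;\R^2),\ \abs{\varphi} \ls 1 \text{ on } A },
\]
the equivalence being a consequence of the Riesz representation theorem. Since the admissible set of test fields $\varphi$ is symmetric under $\varphi \mapsto -\varphi$, this representation exhibits $\TV$ as a supremum of linear functionals of $v$, hence $\TV$ is absolutely homogeneous and subadditive, $\TV(v+w) \ls \TV(v)+\TV(w)$. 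Consequently $\Vert \cdot \Vert_{\BV(A)}$ satisfies the triangle inequality and $\Vert \lambda v \Vert_{\BV(A)} = \abs{\lambda}\,\Vert v \Vert_{\BV(A)}$, and since $\Vert v \Vert_{\BV(A)} \gs \Vert v \Vert_{L^1(A)}$ it vanishes only for $v = 0$; so it is a norm.

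For completeness, I would take a Cauchy sequence $(v_n)$ in $\BV(A)$. It is then Cauchy in $L^1(A)$, and by completeness of $L^1(A)$ there is $v \in L^1(A)$ with $v_n \to v$ in $L^1(A)$. For each fixed admissible $\varphi$, the functional $u \mapsto \int_A u\,\div\varphi \dd x$ is continuous with respect to $L^1$ convergence (because $\div\varphi \in C_c(A) \subset L^\infty(A)$), so
\[
\int_A v\,\div\varphi \dd x = \lim_{n\to\infty} \int_A v_n\,\div\varphi \dd x \ls \liminf_{n\to\infty}\TV(v_n),
\]
and taking the supremum over $\varphi$ yields the lower semicontinuity estimate $\TV(v) \ls \liminf_n \TV(v_n)$. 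As Cauchy sequences are bounded, $\sup_n \TV(v_n) < +\infty$, whence $\TV(v) < +\infty$ and $v \in \BV(A)$.

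It then remains to show $\Vert v_n - v\Vert_{\BV(A)} \to 0$. Fix $\eps > 0$ and choose $M$ with $\Vert v_n - v_m \Vert_{\BV(A)} < \eps$ for all $n,m \gs M$. For fixed $n \gs M$, since $v_m - v_n \to v - v_n$ in $L^1(A)$ as $m \to \infty$, the lower semicontinuity estimate applied to the sequence $(v_m - v_n)_m$ gives $\TV(v - v_n) \ls \liminf_{m} \TV(v_m - v_n) \ls \eps$, while $\Vert v - v_n \Vert_{L^1(A)} = \lim_m \Vert v_m - v_n \Vert_{L^1(A)} \ls \eps$. Adding these bounds, $\Vert v - v_n \Vert_{\BV(A)} \ls 2\eps$ for all $n \gs M$, which is the claimed convergence $v_n \to v$ in $\BV(A)$.

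The only genuinely non-routine ingredient is the lower semicontinuity of $\TV$ with respect to $L^1$ convergence; once it is in hand via the dual formula, the rest is the standard ``$L^1$-Cauchy plus uniform control of the seminorm'' pattern. If one insisted on arguing directly from the measure-theoretic definition, the hard part would be precisely re-deriving this semicontinuity (essentially weak-$*$ compactness of bounded sets of Radon measures together with weak-$*$ lower semicontinuity of the mass), so invoking the dual characterization at the outset is the cleanest route.
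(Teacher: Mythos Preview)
Your proof is correct and follows the standard route: the dual (sup-of-linear-functionals) characterization of $\TV$ yields both the seminorm properties and lower semicontinuity under $L^1$ convergence, and completeness then follows by the usual ``Cauchy in $L^1$ plus lower semicontinuity on differences'' argument.

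Note, however, that the paper does not actually prove this theorem. It is stated as a background fact in the subsection recalling properties of $\BV$, with the surrounding text pointing to \cite{AmbFusPal00} for proofs. So there is no ``paper's own proof'' to compare against; your argument is simply a clean self-contained verification of a result the paper takes as known.
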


The space of functions of bounded variation satisfies the following compactness property \cite[Theorem 3.44]{AmbFusPal00}:
\begin{thm}[Compactness and lower semi-continuity in $\BV$]\label{th:compact}
\label{thm:compact}
Let $v_n \in \BV(A)$ be a sequence of functions such that $\Vert v_n \Vert_{\BV(A)}$ is bounded. 
Then there exists $v \in \BV(A)$ for which, possibly upon taking a subsequence, we have 
$$v_n \xrightarrow{L^1} v.$$
In addition, for any sequence $(w_n)$ that converges to some $w$ in $L^1$,
$$\TV(w) \ls \liminf \TV(w_n).$$
\end{thm}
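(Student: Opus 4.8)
\emph{Remark: what follows is a proof plan.} This is the classical compactness and lower semicontinuity theorem for $\BV$; the plan is to reproduce its standard proof (see \cite[Theorem 3.44]{AmbFusPal00}), and for the compactness part I would assume, as is the case in all our applications, that $A$ is a bounded Lipschitz domain. I would start with the lower semicontinuity assertion, since it is the cleaner of the two and is reused to identify the limit. The tool is the dual formulation of the total variation,
\[
\TV(w) = \sup\set{ \int_A w \, \div \phi \;:\; \phi \in C^1_c(A;\R^2), \ \abs{\phi(x)} \ls 1 \text{ for all } x \in A }.
\]
Fixing an admissible $\phi$, since $w_n \to w$ in $L^1(A)$ and $\div \phi \in L^\infty(A)$ one gets $\int_A w \, \div \phi = \lim_n \int_A w_n \, \div \phi \ls \liminf_n \TV(w_n)$, where the inequality is just the definition of $\TV(w_n)$; taking the supremum over $\phi$ gives $\TV(w) \ls \liminf_n \TV(w_n)$. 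Applying this to the sequence from the first part then shows $\TV(v) \ls \liminf_n \TV(v_n) < \infty$, so the $L^1$-limit $v$ indeed belongs to $\BV(A)$, as claimed.

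For the compactness part, I would first record the elementary translation estimate $\Vert u(\cdot + h) - u \Vert_{L^1(\R^2)} \ls \abs{h}\,\TV(u)$ for $u \in \BV(\R^2)$, proven first for smooth $u$ by the fundamental theorem of calculus along the segments $[x,x+h]$ and Fubini, and then for general $u$ by mollification. Using a bounded linear extension operator $\BV(A) \to \BV(\R^2)$ (available because $A$ is Lipschitz), I would extend each $v_n$ to $\tilde v_n \in \BV(\R^2)$ supported in a fixed ball, with $\Vert \tilde v_n \Vert_{\BV(\R^2)} \ls C \Vert v_n \Vert_{\BV(A)}$. The family $\{\tilde v_n\}$ is then bounded in $L^1(\R^2)$, uniformly compactly supported, and by the translation estimate equicontinuous in the mean uniformly in $n$; the Fréchet--Kolmogorov compactness criterion yields a subsequence converging in $L^1(\R^2)$ to some $v$, and restriction to $A$ gives $v_n \to v$ in $L^1(A)$. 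Together with the first part this also gives $v \in \BV(A)$, completing the argument.

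The main obstacle is the behaviour near $\partial A$: for a general open set one can only expect $L^1_{loc}$ convergence, and the passage to genuine $L^1(A)$ convergence is exactly where one needs $A$ to be a $\BV$-extension domain — the bounded extension operator for Lipschitz domains is the real technical input, the translation estimate being elementary. An alternative that avoids invoking extension, and which I would mention, is to mollify directly: for each fixed $\eps$ the functions $v_n * \rho_\eps$ are equibounded and equi-Lipschitz on compact subsets, hence precompact in $L^1_{loc}$ by Arzelà--Ascoli, while $\Vert v_n * \rho_\eps - v_n \Vert_{L^1} \ls C\eps\,\TV(v_n)$ uniformly in $n$, so a diagonal argument over $\eps = 1/k$ extracts an $L^1$-Cauchy subsequence; the quantitative control near the boundary, however, still has to be paid for somewhere.
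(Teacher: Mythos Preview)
The paper does not actually prove this theorem: it is stated as a background result with a reference to \cite[Theorem 3.44]{AmbFusPal00}, so there is no in-paper argument to compare against. Your plan reproduces the standard proof from that reference and is correct: the lower semicontinuity via the dual formulation of $\TV$ is exactly right, and the compactness via the translation estimate, a bounded $\BV$-extension, and Fr\'echet--Kolmogorov is the canonical route.

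You are also right to flag the hypothesis issue. As written in the paper, $A$ is merely open, which in general only yields $L^1_{loc}$ subsequential convergence; genuine $L^1(A)$ convergence requires $A$ to be a bounded extension domain (bounded Lipschitz suffices). That is not a gap in your argument but an imprecision in the statement as quoted, and in all uses of the theorem in the paper $A$ is indeed a bounded Lipschitz domain, so your added assumption is harmless and appropriate.
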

We frequently use the \emph{coarea} and \emph{layer cake formulas}:
\begin{lem} 
\label{lem:coarealayercake}
Let $u \in \BV(\R^2)$ with compact support, then the coarea formula \cite[Theorem 3.40]{AmbFusPal00}
\begin{equation}\label{eq:coarea}
\TV(u) = \int_{-\infty}^{\infty} \per(u > t) \dd t = \int_{-\infty}^\infty \per( u < t) \dd t
\end{equation}
holds. If $u \in L^1(\R^2)$ is non-negative, then we also have the layer cake formula \cite[Theorem 1.13]{LieLos01}
\begin{equation}  \label{eq:layercake} 
\int_{\R^2} u = \int_{0}^\infty |\{u > t \}| \dd t.
\end{equation}
\end{lem}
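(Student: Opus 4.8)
The plan is to handle the two identities separately, since the layer cake formula \eqref{eq:layercake} is elementary while the coarea formula \eqref{eq:coarea} is the classical deep statement that in the paper is essentially invoked from \cite[Theorem 3.40]{AmbFusPal00}. For \eqref{eq:layercake}: since $u\gs 0$, for almost every $x$ one has $u(x)=\int_0^\infty 1_{\{u>t\}}(x)\dd t$, because the integrand equals $1$ exactly when $t<u(x)$. The map $(x,t)\mapsto 1_{\{u>t\}}(x)$ is non-negative and jointly measurable, so Tonelli's theorem permits exchanging the order of integration,
\[
\int_{\R^2}u=\int_{\R^2}\int_0^\infty 1_{\{u>t\}}(x)\dd t\dd x=\int_0^\infty\abs{\{u>t\}}\dd t;
\]
only $u\in L^1$ is used, and $\int_{\R^2}u<\infty$ forces $\abs{\{u>t\}}<\infty$ for a.e.\ $t$.

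For the inequality $\int_{\R}\per(u>t)\dd t\ls\TV(u)$ I would mollify to obtain $u_n\in C_c^\infty(\R^2)$ with $u_n\to u$ in $L^1$ and $\int_{\R^2}\abs{\nabla u_n}\to\TV(u)$; passing to a subsequence, also $u_n\to u$ pointwise a.e. For smooth compactly supported $u_n$ the classical smooth coarea formula (Sard's theorem: for a.e.\ $t$ the level set $\{u_n=t\}$ is a finite union of smooth curves, whose length is $\per(u_n>t)$) gives $\int_{\R^2}\abs{\nabla u_n}=\int_{\R}\per(u_n>t)\dd t$. For a.e.\ $t$ the set $\{u=t\}$ is Lebesgue-null and $1_{\{u_n>t\}}\to 1_{\{u>t\}}$ a.e., hence in $L^1_{\mathrm{loc}}$, so lower semicontinuity of the perimeter (the second part of Theorem \ref{thm:compact}, applied locally) yields $\per(u>t)\ls\liminf_n\per(u_n>t)$ for a.e.\ $t$. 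Fatou's lemma then gives
\[
\int_{\R}\per(u>t)\dd t\ls\liminf_n\int_{\R}\per(u_n>t)\dd t=\liminf_n\int_{\R^2}\abs{\nabla u_n}=\TV(u),
\]
and in particular $\per(u>t)<\infty$ for a.e.\ $t$.

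For the reverse inequality I would use the $L^1$ identity $u=\int_0^\infty 1_{\{u>t\}}\dd t-\int_{-\infty}^0\bigl(1-1_{\{u>t\}}\bigr)\dd t$ and approximate by simple functions $s_k\to u$ in $L^1$ built as Riemann-type sums over the superlevel sets; by subadditivity of the total variation, $\TV(s_k)$ is bounded by a sum of perimeters of superlevel sets whose value converges to $\int_{\R}\per(u>t)\dd t$, so lower semicontinuity of $\TV$ gives $\TV(u)\ls\liminf_k\TV(s_k)\ls\int_{\R}\per(u>t)\dd t$. Finally, for a.e.\ $t$ the level set $\{u=t\}$ is null (the $\{u=t\}$ are pairwise disjoint and $\R^2$ has finite measure on the support), so $\{u<t\}$ agrees up to a null set with the complement of $\{u>t\}$, whence $\per(u<t)=\per(u>t)$ for a.e.\ $t$, which is the second equality in \eqref{eq:coarea}.

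The only genuinely delicate point is this reverse coarea inequality: constructing the approximating simple functions from the superlevel sets, controlling their total variations by sums of perimeters, and verifying the measurability of $t\mapsto\per(u>t)$. Since this is precisely \cite[Theorem 3.40]{AmbFusPal00} (and \eqref{eq:layercake} is \cite[Theorem 1.13]{LieLos01}), in the paper it suffices to cite those references; the sketch above merely records the structure of the classical argument.
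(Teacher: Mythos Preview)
The paper does not give a proof of this lemma; it simply states the result with citations to \cite[Theorem~3.40]{AmbFusPal00} and \cite[Theorem~1.13]{LieLos01}, which is exactly what you acknowledge in your final paragraph. Your sketch of the classical arguments (Tonelli for the layer cake, mollification plus Fatou for one coarea inequality, simple-function approximation for the other, and the a.e.\ identification of $\{u<t\}$ with the complement of $\{u>t\}$) is a correct outline, so nothing further is required here.
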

An important role in characterizing constrained minimizers of the $\TV$ functional is played by Cheeger sets, which we now define.
\begin{definition}
\label{de:cheeger}
 A set is called Cheeger set of $A \subseteq \R^2$ if it minimizes the ratio $\per(\cdot) / |\cdot|$ among the subsets of $A$.
\end{definition}
The following result is well known and has been stated for instance in \cite[Proposition 3.5, iii]{LeoPra16} and \cite[Proposition 3.1]{Par11}: 
\begin{thm}
\label{thm:cheeger}
For every non-empty measurable set $A \subseteq \R^2$ open, there exists at least one Cheeger set, and its characteristic function minimizes the quotient $u \mapsto \TV(u)/\|u\|_{L^1(A)}$ in $L^1(A)\setminus \{0\}$. Moreover, almost every level set of every minimizer of this quotient is a Cheeger set.
\end{thm}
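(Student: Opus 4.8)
The plan is to treat the three assertions separately: existence of a Cheeger set by the direct method, the identification of the Cheeger constant with the infimum of the relaxed quotient via the coarea and layer cake formulas, and the structure of the level sets by the same slicing identities. Throughout I would write $h(A):=\inf\set{\per(E)/\abs E \ : \ E\subseteq A,\ \abs E>0}$ for the Cheeger constant of $A$; it is finite because the open set $A$ contains a small ball, which is an admissible competitor (and $\abs A<\infty$).

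For existence, I would pick a minimizing sequence $(E_n)$ with $E_n\subseteq A$ and $\per(E_n)/\abs{E_n}\to h(A)$. Since $\abs{E_n}\ls\abs A$, the perimeters $\per(E_n)$ are bounded for large $n$, so the characteristic functions $1_{E_n}$ (viewed as $\BV$ functions supported in a fixed ball containing $\overline A$, so that the full perimeter is controlled) form a bounded sequence in $\BV$; by Theorem \ref{thm:compact} a subsequence converges in $L^1$, hence a.e.\ along a further subsequence, to $1_E$ for some measurable $E\subseteq A$. Then $\abs E=\lim\abs{E_n}$ and, by lower semicontinuity of $\TV$, $\per(E)\ls\liminf\per(E_n)$. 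The one step needing care is that $\abs E>0$: this is exactly where the planar isoperimetric inequality $\per(E_n)\gs c\,\abs{E_n}^{1/2}$ enters, since $\abs{E_n}\to 0$ along a subsequence would force $\per(E_n)/\abs{E_n}$ to blow up, contradicting its convergence to the finite number $h(A)$. With $\abs E>0$ established, $\per(E)/\abs E\ls\liminf\per(E_n)/\abs{E_n}=h(A)$, so $E$ is a Cheeger set.

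For the second assertion, given $u\in L^1(A)\setminus\set 0$ with $\TV(u)<\infty$, I would first reduce to the case $u\gs 0$ by passing to $\abs u$, which leaves $\| u\|_{L^1(A)}$ unchanged and does not increase $\TV$ (as $\abs{\cdot}$ is $1$-Lipschitz), and then, extending $u$ by zero to a compactly supported function on $\R^2$, combine the coarea and layer cake formulas of Lemma \ref{lem:coarealayercake} with the pointwise bound $\per(\set{u>t})\gs h(A)\abs{\set{u>t}}$ (valid for a.e.\ $t$, and trivial when $\abs{\set{u>t}}=0$):
\[
\TV(u)=\int_0^\infty\per(\set{u>t})\dd t\;\gs\;h(A)\int_0^\infty\abs{\set{u>t}}\dd t=h(A)\,\| u\|_{L^1(A)}.
\]
Since $\TV(1_E)/\| 1_E\|_{L^1(A)}=\per(E)/\abs E=h(A)$ for a Cheeger set $E$, this shows $1_E$ attains the infimum of $u\mapsto\TV(u)/\| u\|_{L^1(A)}$, which moreover equals $h(A)$.

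For the last assertion, if $u$ is a minimizer then so is $\abs u$, and, using the known $\BV$ identities $\TV(u)=\TV(u^+)+\TV(u^-)$ and $\| u\|_{L^1}=\| u^+\|_{L^1}+\| u^-\|_{L^1}$, both $u^+$ and $u^-$ are minimizers whenever they are nonzero; applying the displayed chain to these nonnegative functions forces equality there, so $\per(\set{u>t})=h(A)\abs{\set{u>t}}$ for a.e.\ $t>0$ (the difference of the two integrands is nonnegative with vanishing integral), and symmetrically for $-u$. For a.e.\ $t$ with $0<\abs{\set{u>t}}$ this says precisely that $\set{u>t}$ is a Cheeger set. I expect the only genuinely delicate point to be the non-vanishing of the limiting measure in the existence step, which the isoperimetric inequality rules out; the identification of the $L^1$-limit as a characteristic function and the whole level-set analysis are then routine applications of Theorem \ref{thm:compact} and the slicing formulas in Lemma \ref{lem:coarealayercake}.
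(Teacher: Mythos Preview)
The paper does not supply its own proof of this statement: it is quoted as a known fact with references to \cite{LeoPra16} and \cite{Par11}, so there is nothing in the paper to compare your argument against. Your proof is the standard one and is essentially correct; each of the three steps (direct method with the isoperimetric inequality ruling out collapse, coarea plus layer cake to identify $h(A)$ with the infimum of the quotient, and the equality case to pin down the level sets) is the expected route.

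Two small technical points are worth tightening. First, you implicitly assume $A$ is bounded (a ball containing $\overline A$, and $\abs A<\infty$); the statement as written does not say this, but some such hypothesis is needed for $h(A)>0$ and for the compactness argument, and it holds in every instance where the paper invokes the result. Second, when you extend $u$ by zero to apply Lemma~\ref{lem:coarealayercake}, make explicit that the relevant total variation is then that of the extension in $\R^2$, so that jumps across $\partial A$ are counted; this matches the paper's convention $\per(B)=\TV(1_B)$ in the plane and is precisely how the theorem is used later in the proof of Lemma~\ref{lem:5val}. The identity $\TV(u)=\TV(u^+)+\TV(u^-)$ that you invoke for the last step is correct and follows from the coarea formula for $u$ together with $\per(\{u>t\})=\per(\{u\ls t\})$ for $t<0$, but since it is the hinge of your sign-changing argument you might state why it holds rather than labeling it ``known''.
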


\begin{rem}
Some sets may have more than one Cheeger set, which introduces nonuniqueness in the minimizers of the quotient $\TV(\cdot)/\|\cdot\|_{L^1(A)}$. One example is the set $\Omega$ of Figure \ref{fig:dumbbell} below.
\end{rem}

\subsection{Generalized minimizers of \texorpdfstring{$E$}{E}}
Using the compactness Theorem \ref{thm:compact}, it follows that the relaxed quotient 
\begin{equation*}
E(\omega):=\frac{\TV(\omega)}{\int_{\Omega_s} \omega}
\end{equation*}
of \eqref{eq:buonum} attains a minimizer in the space
$$ \mathcal{B}:=\left \{ v \in \BV(\R^2) \ \middle\vert \int_\Omega v = 0, \ \nabla v = 0 \text{ on } \Omega_s, \ v = 0 \text{ on } \R^2 \setminus \Omega \right\}.$$
Note that the quotient $E$ is invariant with respect to scalar multiplication, and we can therefore add the constraint 
\begin{equation}
\fint_{\Omega_s} v := \frac{1}{|\Omega_s|}\int_{\Omega_s} v= 1\label{eq:normalize}
\end{equation} 
to $\mathcal{B}$ without changing the minimal value of the functional $E$. Thus, the problem of minimizing $E$ over $\mathcal{B}$ is equivalent to the following problem:
\begin{problem}
\label{pr:1}
Find a minimizer of $\TV$ over the set
$$ \BV_\diamond := \left \{ v \in \BV(\R^2) \ \middle\vert \int_\Omega v = 0, \ \fint_{\Omega_s} v = 1, \ \nabla v = 0 \text{ on } \Omega_s, \ v = 0 \text{ on } \R^2 \setminus \Omega \right\}.$$
\end{problem}
By using standard compactness and lower semicontinuity results in $\BV(\R^2)$, it is easy to see \cite{FriIglMerPoeSch17} that there is at least one solution to Problem 1. In particular, we emphasize that all the constraints above are closed with respect to the $L^1$ topology.
\begin{rem}
Notice that $\BV_\diamond$ is larger than the optimization space \eqref{eq:bvdo} used in \cite{FriIglMerPoeSch17} , where it has been assumed that $v=\text{const.}$ in $\Omega_s$. See also Section \ref{sec:BV}.
\end{rem}

\subsection{The critical yield limit}
We investigate the limit of $\omega_Y$ (the minimizer of $G_Y^\diamond$, defined in \eqref{eq:GY}) 
when $Y \to Y_c$. For this purpose we first prove
\begin{prop}\label{prop:bounded}
 The quantity $\int_{\Omega_f} |\nabla \omega_Y|$ is nonincreasing with respect to $0 \ls Y \ls Y_C$. In particular, it is bounded.
 \end{prop}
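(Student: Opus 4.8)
The plan is the standard comparison-of-minimizers argument for a convex functional depending linearly on a parameter over a parameter-independent constraint set. Split $G_Y^\diamond = D + Y\,T$ on $H_\diamond$, where
\[
T(\omega) := \int_{\Omega_f} \abs{\nabla \omega}, \qquad D(\omega) := \frac{1}{2}\int_{\Omega_f}\abs{\nabla\omega}^2 - \int_{\Omega_s}\omega
\]
collects the part independent of $Y$. The crucial observation is that $H_\diamond$ does not depend on $Y$, so for any two values $0 \ls Y_1 < Y_2 \ls Y_c$ the functions $\omega_{Y_1}$ and $\omega_{Y_2}$ are both admissible competitors for either $G_{Y_1}^\diamond$ or $G_{Y_2}^\diamond$.

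First I would write down the two minimality inequalities, testing minimality of $\omega_{Y_1}$ against $\omega_{Y_2}$ and vice versa:
\[
\begin{aligned}
D(\omega_{Y_1}) + Y_1\,T(\omega_{Y_1}) &\ls D(\omega_{Y_2}) + Y_1\,T(\omega_{Y_2}),\\
D(\omega_{Y_2}) + Y_2\,T(\omega_{Y_2}) &\ls D(\omega_{Y_1}) + Y_2\,T(\omega_{Y_1}).
\end{aligned}
\]
Adding these cancels the $D$-terms and yields $(Y_2 - Y_1)\bigl(T(\omega_{Y_1}) - T(\omega_{Y_2})\bigr) \gs 0$; since $Y_2 > Y_1$, this forces $T(\omega_{Y_1}) \gs T(\omega_{Y_2})$, i.e. $Y \mapsto \int_{\Omega_f}\abs{\nabla\omega_Y}$ is nonincreasing.

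For the boundedness statement it then suffices to bound the value at $Y=0$: since $\omega_0 \in H_\diamond \subset H_0^1(\Omega)$ we have $\nabla\omega_0 \in L^2(\Omega_f)$, and as $\Omega_f$ is bounded, Cauchy--Schwarz gives $T(\omega_0) \ls \abs{\Omega_f}^{1/2}\Vert\nabla\omega_0\Vert_{L^2(\Omega_f)} < +\infty$; hence $0 \ls \int_{\Omega_f}\abs{\nabla\omega_Y} \ls T(\omega_0) < +\infty$ for all $Y \in [0,Y_c]$.

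I do not expect a serious obstacle here. The only point worth a word of care is making explicit that the cross-testing is legitimate precisely because the constraint set $H_\diamond$ is independent of $Y$ and the $Y$-dependent term $Y\,T$ is linear in $Y$; strict convexity and uniqueness of $\omega_Y$ are used only to make $\omega_Y$ well defined and play no role in the monotonicity itself.
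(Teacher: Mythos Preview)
Your argument is correct and is essentially identical to the paper's own proof: both test the minimality of $\omega_{Y_1}$ and $\omega_{Y_2}$ against each other, add the two inequalities so that the $Y$-independent part cancels, and read off the monotonicity of $\int_{\Omega_f}|\nabla\omega_Y|$ from the sign of $Y_2-Y_1$. The only cosmetic difference is that the paper writes $G^\diamond_{Y_2}(\omega_{Y_1}) = G^\diamond_{Y_1}(\omega_{Y_1}) + (Y_2-Y_1)\int_{\Omega_f}|\nabla\omega_{Y_1}|$ directly rather than introducing the notation $D+Y\,T$, and it leaves the boundedness as an immediate consequence of monotonicity without spelling out the Cauchy--Schwarz bound at $Y=0$ as you do.
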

\begin{proof}
 Let $Y_c \gs Y_1 > Y_2 \gs 0$. Then, from the definition \eqref{eq:GY} of $\omega_Y$ being a minimizer of $G_Y^\diamond$ it follows that
 $$\mathcal G^\diamond_{Y_2}(\omega_{Y_2}) \ls \mathcal G^\diamond_{Y_2}(\omega_{Y_1}) = \mathcal G^\diamond_{Y_1}(\omega_{Y_1}) + (Y_2-Y_1) \int_{\Omega_f} |\nabla \omega_{Y_1}|,$$
  $$\mathcal G^\diamond_{Y_1}(\omega_{Y_1}) \ls \mathcal G^\diamond_{Y_1}(\omega_{Y_2}) = \mathcal G^\diamond_{Y_2}(\omega_{Y_2}) + (Y_1-Y_2) \int_{\Omega_f} |\nabla \omega_{Y_2}|,$$
  and summing, we get
$$(Y_1 - Y_2) \left(\int_{\Omega_f} |\nabla \omega_{Y_2}| - \int_{\Omega_f} |\nabla \omega_{Y_1}|\right) \gs 0,$$
which implies the assertion.
\end{proof}

We are now ready to investigate the convergence of $\omega_Y$ and its rate.
\begin{thm}
 For $Y \nearrow Y_c$, we have 
 \begin{equation}\int_{\Omega} |\nabla \omega_Y|^2 \ls |\Omega_f| (Y_c - Y)^{2}.\label{eq:almostquadest} \end{equation}
 Moreover, the sequence of rescaled profiles 
\begin{equation}\label{eq:vy}
v_Y := \frac{\omega_Y}{\int_\Omega |\nabla  \omega_Y|}
\end{equation}
converges in the sense of Theorem \ref{thm:compact}, up to possibly taking a sequence, to a solution of Problem \ref{pr:1}.
\end{thm}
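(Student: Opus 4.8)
The plan is to prove the two assertions in turn; the first is a one-line consequence of \eqref{eq:zerograd}, while the second is a $\BV$-compactness argument whose only delicate point is to rule out a degenerate limit. For the quadratic estimate \eqref{eq:almostquadest}, I would combine \eqref{eq:zerograd}, that is $\int_\Omega|\nabla\omega_Y|^2\ls(Y_c-Y)\int_{\Omega_f}|\nabla\omega_Y|$, with the Cauchy--Schwarz inequality $\int_{\Omega_f}|\nabla\omega_Y|\ls|\Omega_f|^{1/2}\big(\int_{\Omega_f}|\nabla\omega_Y|^2\big)^{1/2}$. Since $\nabla\omega_Y=0$ on $\Omega_s$, the $\Omega_f$- and $\Omega$-integrals of $|\nabla\omega_Y|^2$ coincide, so this gives $\int_\Omega|\nabla\omega_Y|^2\ls(Y_c-Y)|\Omega_f|^{1/2}\big(\int_\Omega|\nabla\omega_Y|^2\big)^{1/2}$; dividing by $\big(\int_\Omega|\nabla\omega_Y|^2\big)^{1/2}$ when this is positive (the estimate being trivial otherwise) and squaring yields \eqref{eq:almostquadest}.

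For the convergence statement, I would first check that $v_Y$ is well defined for $Y<Y_c$, i.e.\ that $\int_\Omega|\nabla\omega_Y|>0$ (equivalently, by Poincar\'e's inequality, that $\omega_Y\not\equiv0$): otherwise $G_Y^\diamond$ attains its minimum at $0$, which is impossible since \eqref{eq:eigenval} provides $\omega\in H_\diamond$ with $Y\int_\Omega|\nabla\omega|<\int_{\Omega_s}\omega$, and then $G_Y^\diamond(t\omega)<0$ for small $t>0$. Because $\omega_Y\in H_0^1(\Omega)$ extends by zero to an element of $W^{1,1}(\R^2)$, we have $\TV(v_Y)=1$; combined with the Poincar\'e inequality for $\BV$ functions supported in the bounded set $\Omega$, this makes $(v_Y)$ bounded in $\BV(\R^2)$. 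By Theorem \ref{thm:compact}, along some sequence $Y_k\nearrow Y_c$ we get $v_{Y_k}\to v^\ast$ in $L^1$ with $v^\ast\in\BV(\R^2)$, and since all the constraints defining $\mathcal B$ are closed under $L^1$-convergence, $v^\ast\in\mathcal B$.

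The heart of the matter is to identify $\int_{\Omega_s}v^\ast$, thereby excluding a trivial limit. From \eqref{eq:zerograd1} (and $\int_{\Omega_f}|\nabla\omega_Y|=\int_\Omega|\nabla\omega_Y|$) one gets
\begin{equation*}
\int_{\Omega_s}v_Y=\frac{\int_\Omega|\nabla\omega_Y|^2}{\int_\Omega|\nabla\omega_Y|}+Y,
\end{equation*}
and by \eqref{eq:zerograd} the first term lies in $[0,\,Y_c-Y]$, hence $Y\ls\int_{\Omega_s}v_Y\ls Y_c$. Letting $Y_k\to Y_c$ and using $1_{\Omega_s}\in L^\infty$ then forces $\int_{\Omega_s}v^\ast=Y_c$. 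By lower semicontinuity of $\TV$, $\TV(v^\ast)\ls\liminf_k\TV(v_{Y_k})=1$, so $E(v^\ast)=\TV(v^\ast)/\int_{\Omega_s}v^\ast\ls 1/Y_c$. Since $1/Y_c$ is the minimum of $E$ over $\mathcal B$ --- equivalently, the $\BV$-relaxation is exact, the minimal value in Problem \ref{pr:1} being $|\Omega_s|/Y_c$ --- and $v^\ast\in\mathcal B$, we conclude $E(v^\ast)=1/Y_c$, whence $\TV(v^\ast)=1$; thus $v^\ast$ minimizes $E$ over $\mathcal B$, and its rescaling $v^\ast/\fint_{\Omega_s}v^\ast=\tfrac{|\Omega_s|}{Y_c}v^\ast$ lies in $\BV_\diamond$ and minimizes $\TV$, i.e.\ solves Problem \ref{pr:1}.

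I expect the main obstacle to be exactly this identification step: $\BV$-compactness by itself does not prevent $v^\ast$ from vanishing (or from having $\int_{\Omega_s}v^\ast=0$), and it is the quantitative relations \eqref{eq:zerograd1} and \eqref{eq:zerograd} --- not any soft argument --- that pin down $\int_{\Omega_s}v^\ast=Y_c>0$ and so yield a genuine minimizer. A subsidiary point that should be recorded is the exactness of the relaxation, $\inf_{\mathcal B}E=\inf_{H_\diamond}E=1/Y_c$: the inequality ``$\ls$'' is clear since $H_\diamond\subset\mathcal B$, and the reverse follows by approximating any $w\in\mathcal B$ by elements of $H_\diamond$ in the strict topology of $\BV$ --- mollifying $w$ inside $\Omega_f$ and interpolating across thin collars of $\partial\Omega_s$ and $\partial\Omega$ so that the added total variation converges to the jump part of $\TV(w)$ --- which is standard but worth spelling out.
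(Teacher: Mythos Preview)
Your proposal is correct and follows essentially the same route as the paper: Cauchy--Schwarz applied to the inequality $\int_\Omega|\nabla\omega_Y|^2\ls(Y_c-Y)\int_{\Omega_f}|\nabla\omega_Y|$ for \eqref{eq:almostquadest}, then $\BV$-compactness together with the identification $\int_{\Omega_s}v^\ast=Y_c$ via \eqref{eq:zerograd1}--\eqref{eq:zerograd} and lower semicontinuity of $\TV$. The only differences are cosmetic: the paper re-derives the key inequality as \eqref{eq:rate12} from the variational inequality \eqref{eq:notEL} (rather than quoting \eqref{eq:zerograd}), and phrases the identification step through the ratio limit \eqref{eq:ratiolimit} rather than your sandwich $Y\ls\int_{\Omega_s}v_Y\ls Y_c$; your extra checks (well-definedness of $v_Y$ for $Y<Y_c$, and the exactness of the $\BV$-relaxation $\inf_{\mathcal B}E=1/Y_c$) are points the paper leaves implicit.
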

\begin{proof}
The first part of the proof is already presented in \cite[Section VI 8.3, Equation (8.20)]{DuvLio76} but we reproduce it here for convenience.
 As before, let $Y_c \gs Y_1 > Y_2 \gs 0$. We use \eqref{eq:notEL} for $Y_1$ and $v=\omega_{Y_2}$ as well as the same inequality for $Y_2$ and $v = \omega_{Y_1}$ and sum the inequalities obtained to get
 $$\int_{\Omega_f} |\nabla \omega_{Y_1} - \nabla \omega_{Y_2}|^2 \ls (Y_1 - Y_2) \left( \int_{\Omega_f} |\nabla \omega_{Y_2}| - |\nabla \omega_{Y_1}| \right).$$
 With $Y_1 = Y_c$ and $Y_2$ a generic $Y$, and since $\omega_{Y_c}=0$, the above implies
 \begin{equation} \int_{\Omega_f} |\nabla \omega_Y|^2 \ls (Y_c-Y) \int_{\Omega_f} |\nabla \omega_Y|. \label{eq:rate12} \end{equation}
 On the other hand, the Cauchy-Schwarz inequality gives
$$ \int_{\Omega_f} |\nabla \omega_Y| \ls |\Omega_f|^{1/2} \left( \int_{\Omega_f} |\nabla \omega_Y|^2 \right)^{1/2}.$$
 Putting these two inequalities together, we obtain
 $$  \int_{\Omega_f} |\nabla \omega_Y|^2 \ls |\Omega_f|^{1/2} (Y_c-Y)  \left( \int_{\Omega_f} |\nabla \omega_Y|^2 \right)^{1/2}$$
 which leads to \eqref{eq:almostquadest}.

Now, the associated functions $v_Y$, defined in \eqref{eq:vy}, have total variation $1$ and zero mean. From Theorem \ref{thm:compact} it follows that $v_Y$ converges in $L^1$ to some $v_c$. Now, it follows directly from \eqref{eq:rate12} and \eqref{eq:zerograd1} that 
\begin{equation}\lim_{Y \to Y_c} \frac{Y \int_{\Omega} |\nabla  \omega_Y|}{\int_{\Omega_s} \omega_Y} = 1,\label{eq:ratiolimit}\end{equation} 
and therefore, using the $L^1$ convergence of $v_Y$, its definition \eqref{eq:vy} and that $\int_{\Omega} |\nabla \omega_y|=\int_{\Omega_f} |\nabla \omega_y|$, \eqref{eq:ratiolimit} implies 
$$ \int_{\Omega_s} v_c = \lim_{Y \to Y_c} \int_{\Omega_s} v_Y = \lim_{Y \to Y_c}  \frac{\int_{\Omega_s} \omega_Y}{\int_{\Omega_f} |\nabla \omega_y|} = Y_c.$$
Recalling that $\TV( v_Y) = 1$, the semi-continuity of the total variation with respect to $L^1$ convergence implies $\TV( v_c) \ls 1$, which yields
$$ Y_c \int_\Omega |\nabla  v_c| - \int_{\Omega_s} v_c  \ls 0,$$ 
which can be rewritten as
$$Y_c \ls \frac{\int_{\Omega_s} v_c}{\int_\Omega |\nabla  v_c|}$$
so $v_c$ is a maximizer of $v \mapsto \frac{\int_{\Omega_s} v}{\int_\Omega |\nabla v|} $.
\end{proof}
From the above result, we see that a minimizer of the quotient $\frac{\int_\Omega |\nabla  v|}{\int_{\Omega_s} v}$ can be obtained as a limit of rescaled physical velocities, and therefore carries information about their geometry. For this reason, we will focus on these minimizers in the following.

\section{Piecewise constant minimizers}\label{sec:geomsols}
We prove the existence of solutions of Problem \ref{pr:1} with particular properties. In our previous work \cite{FriIglMerPoeSch17} this problem was considered under the assumption that the velocity is constant in the whole $\Omega_s$. In the situation considered here, the physical velocity $\omega$ is constant only on every connected component of $\Omega_s$, and the velocity of \emph{each} solid particle is an unknown. Therefore, the candidates of limiting profiles $v$ over which we optimize (belonging to $\BV_\diamond$) also satisfy $\nabla v = 0$ on $\Omega_s$.

\subsection{A minimizer with three values}\label{sec:3level}
\begin{thm}\label{thm:3val}There is a solution of Problem \ref{pr:1} that attains only two non-zero values.
\end{thm}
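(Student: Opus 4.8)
The plan is to start from an arbitrary minimizer $v \in \BV_\diamond$ of $\TV$ (which exists by the discussion following Problem~\ref{pr:1}) and reduce it to one taking only two non-zero values by working with its level sets and invoking the coarea formula \eqref{eq:coarea}. The key structural observation is that the constraints defining $\BV_\diamond$ — namely $\int_\Omega v = 0$, $\fint_{\Omega_s} v = 1$, $\nabla v = 0$ on $\Omega_s$, and $v = 0$ outside $\Omega$ — all behave well under truncation and under passing to superlevel/sublevel sets. Concretely, for $t > 0$ the set $\{v > t\}$ either contains $\Omega_s$ (up to null sets) or is disjoint from it when $t \geq 1$, because $v$ is a.e.\ constant on each connected component of $\Omega_s$; since $\fint_{\Omega_s} v = 1$, at least one component has value $\geq 1$. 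I would first argue that we may assume $v$ takes the value $1$ on all of $\Omega_s$: replacing $v$ by its truncation does not increase $\TV$ (by coarea) while it can only help the constraints, and an averaging/symmetrization across components can be used so that the single positive value $a \geq 1$ is attained on $\Omega_s$.

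Next I would split $v$ into its positive and negative parts and treat the distribution of values on each side via coarea. Writing $\TV(v) = \int_{-\infty}^{\infty} \per(\{v > t\}) \, dt$, and noting the normalization/mass constraints are linear in $v$, the idea is that among all $v \in \BV_\diamond$ with prescribed positive value set $P$ and negative value set, the total variation is governed by perimeters of level sets, and one can "stack" all the positive level sets into a single one and all the negative level sets into a single one without increasing the cost, as long as the linear constraints $\int_\Omega v = 0$ and $\fint_{\Omega_s} v = 1$ can still be met. This is a standard "bathtub"-type rearrangement: the function $t \mapsto \per(\{v>t\})$ being integrated against $dt$, one shows the infimum over the relevant class is attained at a two-value (plus zero) configuration $v = a\,1_{P} - b\,1_{N}$ with $\Omega_s \subseteq P \subseteq \Omega$, $N \subseteq \Omega \setminus \Omega_s$ (or more precisely $N \subseteq \Omega$ with $N \cap \Omega_s$ of measure zero, forced by $\nabla v = 0$ on $\Omega_s$ together with the value on $\Omega_s$ being the positive one), and $a|P| = a|\Omega_s| \cdot(\text{something}) $ balanced against $b|N|$ so that $\int_\Omega v = 0$. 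The two free scalars $a,b$ are then pinned down (up to the scaling invariance already used) by $\fint_{\Omega_s}v = 1$ and the zero-mean condition, giving a genuine element of $\BV_\diamond$ with exactly two non-zero values.

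The technical heart, and the step I expect to be the main obstacle, is making the "stacking of level sets" rigorous while simultaneously respecting \emph{both} the zero-mean constraint and the constraint $\nabla v = 0$ on $\Omega_s$. Truncation and coarea give $\TV$ monotonicity for free, but truncation changes $\int_\Omega v$, so one cannot naively truncate; instead one must argue that the optimal profile is constant on the "plateau" $P\setminus \Omega_s$ is actually fine, but that the negative side can be collapsed to one value, and that doing so one can re-solve for the scalar heights to restore $\int_\Omega v = 0$ — and that this re-solving does not increase $\TV$. I would handle this by a convexity/extreme-point argument: fix the sublevel and superlevel \emph{sets} $\{v>t\}$ only through their perimeters, observe that the constraint set, once we parametrize by the non-increasing rearrangement of values, is a convex polytope in a suitable sense, and that $\TV$ is linear in the "layer-cake" variables, so a minimizer at an extreme point has at most two jumps. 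An alternative, perhaps cleaner route mirroring \cite{FriIglMerPoeSch17}: show directly that if $v$ takes three or more positive (or negative) values on sets of positive measure, one can perturb $v \to v + \epsilon(1_{A} - 1_{B})$ along two level sets keeping all constraints satisfied and not increasing $\TV$, iterating until only two non-zero values remain; the delicate point there is controlling the constraint $\nabla v=0$ on $\Omega_s$, which forces all perturbation to happen away from $\Omega_s$ except for a single overall scaling, and checking that the zero-mean constraint leaves exactly enough freedom. Either way, the obstruction is bookkeeping the interaction of the affine constraints with the coarea decomposition; once that is set up, the conclusion that two non-zero values suffice is immediate.
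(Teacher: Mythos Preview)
There is a genuine gap, and it begins early. Your reduction ``we may assume $v$ takes the value $1$ on all of $\Omega_s$'' is exactly the simplification that this paper is set up to \emph{remove}: the admissible class $\BV_\diamond$ only forces $v$ to be constant on each connected component $\Omega_s^i$, with the average over $\Omega_s$ equal to $1$. Different particles can (and in the examples do) carry different constants $\gamma_i$, so for generic $t>0$ the superlevel set $\{v>t\}$ will contain some components of $\Omega_s$ and miss others; it is simply not true that it ``either contains $\Omega_s$ or is disjoint from it''. No truncation or ``averaging/symmetrization across components'' will force a common value without risking an increase of $\TV$, and you give no mechanism for this. Once this reduction fails, your subsequent bathtub/stacking argument, which implicitly treats $\Omega_s$ as a single block inside the positive level set, no longer respects the constraint $\nabla v=0$ on $\Omega_s$.

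The second gap is the passage from a general minimizer to one with finitely many values. Your extreme-point heuristic (``$\TV$ is linear in the layer-cake variables, so a minimizer at an extreme point has at most two jumps'') is the right idea \emph{once the range is finite}, and the paper carries it out precisely as a linear program in $\R^p$ (two equality constraints, sign constraints, linear objective, hence a vertex solution with at most two nonzero coordinates). But in infinite dimensions this extreme-point claim is not available for free: the level-set family $\{E_s\}_{s}$ is uncountable and there is no polytope. The paper spends real effort here: it first slices $u$ at the finitely many particle values $\gamma_1\le\cdots\le\gamma_N$, observes that each slice $u_i$ is itself a constrained $\TV$-minimizer with $0/1$ boundary data and a prescribed integral, and then proves a separate lemma (their Lemma~\ref{lem:5val}, resting on Lemma~\ref{lem:minTV}) that any such minimizer can be replaced by one with at most five values. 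Only after assembling these finitely many five-valued pieces does the linear-programming step apply. Your proposal contains neither the particle-value slicing nor any substitute for Lemma~\ref{lem:5val}, so the finite-range reduction is missing.
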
 

The same result has been proved in \cite{FriIglMerPoeSch17} in the simpler situation when the velocities were considered uniformly 
constant on the whole $\Omega_s$. For the proof of Theorem \ref{thm:3val}, we proceed in two steps:
\begin{enumerate}
\item We prove the existence of a minimizer for Problem \ref{pr:1} which attains only finitely many values. This is accomplished by convexity arguments reminiscent of slicing by the coarea \eqref{eq:coarea} and layer cake \eqref{eq:layercake} formulas, but more involved.
\item When considered over functions with finitely many values, the minimization of the total variation with integral constraints is a simple finite-dimensional optimization problem, and standard linear programming arguments provide the result.
\end{enumerate}

The core of the proof of Theorem \ref{thm:3val} is the following lemma, that states that a simplified version of the minimization problem can be solved with finitely many values.
\begin{lem}
Let $\Omega_1 \subset \Omega_0$ be two bounded measurable sets, $\nu \in \R$. Then, there exists a minimizer of $\TV$ on the set 
\begin{equation}
\mathcal{A}_\nu(\Omega_0, \Omega_1):= \set{ v \in \BV(\R^2) \ \middle | \ \restr{v}{\R^2 \setminus \Omega_0} \equiv 0, \ \restr{v}{\Omega_1} \equiv 1 , \ \int_{\R^2} v = \nu}, 
\label{eq:adm3} 
\end{equation}
where the range consists of at most five values, one of them being zero.
\label{lem:5val}
\end{lem}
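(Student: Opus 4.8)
The plan is to first secure existence of a minimizer in $\mathcal A_\nu(\Omega_0,\Omega_1)$ by the direct method: any minimizing sequence is bounded in $L^1$ (the functions vanish outside the bounded set $\Omega_0$ and equal $1$ on $\Omega_1$, and we may truncate to control the $L^\infty$ norm, see below), hence bounded in $\BV$, and Theorem \ref{thm:compact} gives an $L^1$-convergent subsequence with a limit that still satisfies the (closed) constraints and is lower semicontinuous for $\TV$. The real content is the \emph{five values} claim, and the strategy is a two-sided slicing/truncation argument exploiting the fact that $\TV$ decomposes over superlevel sets via the coarea formula \eqref{eq:coarea}, combined with a convexity (exchange) argument to collapse the continuum of levels to finitely many.

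First I would reduce the range. Let $v$ be a minimizer. I claim one may assume $0 \le v \le 1$ on $\{v \ge 0\}$ ``above'' and that the negative part takes a single value. Concretely: replace $v$ by $\min(v,1)$ on the region where this does not violate the mean constraint — but since changing $v$ changes $\int v$, the correct device is to argue via the coarea formula that the positive part of $v$, which must interpolate between the value $1$ on $\Omega_1$ and $0$ on $\R^2\setminus\Omega_0$, can be taken to attain only finitely many values in $[0,1]$, and the negative part (which is ``free'' except through the integral) can be taken to be a single constant $-a$ with $a \ge 0$ on a single set. Here is the mechanism for the positive part: for $t \in (0,1)$ the superlevel sets $E_t := \{v > t\}$ are nested, all contain $\Omega_1$ and are contained in $\Omega_0$, and by \eqref{eq:coarea}, $\TV(v^+) = \int_0^\infty \per(E_t)\dd t$ (splitting at $t=1$ for the part above $1$, which we dispose of separately). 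Among all nested families $(E_t)$ with these endpoint constraints and with prescribed ``area integral'' $\int_0^1 |E_t|\dd t$ fixed to match the contribution of $v^+$ to the mean, the functional $\int_0^1 \per(E_t)\dd t$ is minimized by a family that is piecewise constant in $t$ with at most two jumps in $(0,1)$ — this is exactly the kind of convexity/linear-programming argument announced in step 2 of the sketch, applied to the one-parameter slicing: moving ``mass'' in $t$ between levels to concentrate on the two extreme feasible sets can only decrease $\sum \per$. That yields at most two distinct positive values strictly between $0$ and $1$, plus possibly a set where $v \equiv 1$ (it already must equal $1$ on $\Omega_1$, and an analogous slicing argument for $t>1$ shows we can take $v \le 1$, i.e. no extra value above $1$). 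Symmetrically, for $t < 0$ the sublevel sets $\{v < t\}$ are nested and contained in $\Omega_0 \setminus \Omega_1$, and the same slicing collapses the negative part to a single value $-a$. Counting: $\{0,\ 1,\ $ two intermediate positives$,\ -a\}$ gives at most five values, one of them zero, as claimed; if $a=0$ or some values coincide we simply have fewer.

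The step I expect to be the main obstacle is making the ``collapse the continuum of levels'' argument rigorous in the presence of \emph{two} linked monotone families (positive superlevels growing, negative sublevels growing as $t$ decreases) and a \emph{single} scalar coupling constraint $\int_{\R^2} v = \nu$. The subtlety is that adjusting the positive part to reduce $\sum \per$ generically changes $\int v^+$, which must then be compensated by the negative part, whose own perimeter contribution then changes; so one cannot optimize the two sides independently. The clean way around this is to set up a single auxiliary finite-dimensional problem: fix the finite list of \emph{candidate} level sets (the endpoints $\Omega_1 \subset \Omega_0$ and finitely many nested sets interpolating them, obtained from the level sets of the given minimizer at a well-chosen finite set of thresholds), and minimize $\TV$ over functions that are constant on the rings between consecutive candidate sets — this is a linear program in the ``heights'' with the mean as a single linear equality constraint and the nesting giving the $\TV$ as a linear function of height-differences; a basic optimal solution then has at most as many nonzero height-increments as there are active constraints, which one counts to be at most the asserted number. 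The remaining care is to check that the candidate sets can indeed be chosen so that this LP's optimum does not exceed $\TV(v)$ — this follows because $v$ itself (suitably discretized in value) is feasible for the LP and the coarea formula makes its objective value converge to $\TV(v)$ as the discretization refines, together with lower semicontinuity to pass to the true minimizer. I would also remark that the bound ``five'' is likely not optimal and that in the subsequent Theorem \ref{thm:3val} it is further reduced, via the global linear-programming argument over \emph{all} particles, to the three values $\{0,\alpha,\beta\}$.
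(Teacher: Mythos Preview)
Your decomposition into the three ranges $\{v<0\}$, $\{0\le v\le 1\}$, $\{v>1\}$ is exactly the one the paper uses, and your treatment of the negative part via Cheeger sets is correct and matches the paper. However, there are two genuine gaps.

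First, the assertion that ``an analogous slicing argument for $t>1$ shows we can take $v\le 1$'' is false in general: if $\nu>|\Omega_0|$ then no function in $\mathcal A_\nu(\Omega_0,\Omega_1)$ is bounded by $1$, so a minimizer must take a value above $1$. The paper does not attempt to remove this range; instead it applies the same Cheeger argument as for the negative part to replace $w^{1+}-1$ by $\zeta\,1_{\mathcal C_1}$, contributing \emph{one} value $1+\zeta$ above $1$. With that correction, your count (two intermediate values in $(0,1)$, plus $0$, $1$, $-a$, $1+\zeta$) would give six values, not five.

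Second, and this is the heart of the lemma, your claim that the nested family $(E_t)_{t\in(0,1)}$ can be collapsed to a piecewise constant family ``with at most two jumps'' is exactly the difficult step, and the LP/discretization route you sketch does not close it. The issue is that for any finite set of thresholds the LP optimum over functions constant on the resulting rings is only an \emph{approximation} to $\TV(v)$; you then need to pass a sequence of finitely-valued approximate minimizers to an $L^1$ limit and argue that the limit is still five-valued, which requires uniform control on the values and level sets of the LP vertices that you do not provide. The paper avoids this entirely by proving a separate Lemma (Lemma~\ref{lem:minTV}): for almost every $s\in(0,1)$, the level set $E_s=\{u>s\}$ of the middle minimizer is \emph{itself} a $\TV$ minimizer in $\mathcal A_{|E_s|}(\Omega_0,\Omega_1)$. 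With this in hand one takes $E^+=\bigcup_{s>s_\mu}E_s$ and $E^-=\bigcap_{s<s_\mu}E_s$ at the threshold $s_\mu$ where $|E_s|$ crosses $\mu=\int w^{(0,1)}$, shows both are minimizers for their own masses, and then a direct convexity comparison (not an LP limit) gives that $\lambda\,1_{E^-}+(1-\lambda)\,1_{E^+}$ with $\lambda=(\mu-|E^+|)/(|E^-|-|E^+|)$ has $\TV$ no larger than $\TV(w^{(0,1)})$. This yields \emph{three} values $\{0,\lambda,1\}$ for the middle part, and together with one negative value and one value above $1$ the total is five.
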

In turn our proof of Lemma \ref{lem:5val} is based on the following minimizing property of level sets, which we believe could be of interest in itself.
\begin{lem}
Let $\Omega_0, \Omega_1, \nu$ and $\mathcal{A}_\nu(\Omega_0, \Omega_1)$ be as in Lemma \ref{lem:5val}, and $u$ a minimizer of $\TV$ in $\mathcal{A}_\nu(\Omega_0, \Omega_1)$. Assume further that $u$ has values only in $[0,1]$, and denote $E_s := \{u > s\}$. Let $s_0$ be a Lebesgue point of $ s \mapsto \per(E_s)$ and $s \mapsto |E_s|$ (these two functions are measurable, so almost every $s \in [0,1]$ is a Lebesgue point for them). Then $1_{E_{s_0}}$ minimizes $\TV$ in $\mathcal{A}_{|E_{s_0}|}(\Omega_0, \Omega_1)$.
 \label{lem:minTV}
\end{lem}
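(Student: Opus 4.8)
\textbf{Proof proposal for Lemma \ref{lem:minTV}.}

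The plan is to argue by contradiction: suppose $1_{E_{s_0}}$ is \emph{not} a minimizer of $\TV$ in $\mathcal{A}_{|E_{s_0}|}(\Omega_0,\Omega_1)$, so there is some competitor $w \in \mathcal{A}_{|E_{s_0}|}(\Omega_0,\Omega_1)$ with $\TV(w) < \per(E_{s_0})$. The idea is then to splice $w$ into $u$ at the level $s_0$ to build a competitor $\tilde u \in \mathcal{A}_\nu(\Omega_0,\Omega_1)$ strictly better than $u$, contradicting minimality of $u$. The natural surgery is to replace the portion of $u$ below level $s_0$ by something whose super-level set at height $0$ agrees with $w$. Concretely, one sets $\tilde u := s_0\, w + (u - s_0)^+$, or equivalently, keeping the part of $u$ above $s_0$ untouched and using $s_0 w$ on the complement; since $0 \le w \le 1$ (one may truncate $w$ to $[0,1]$ without increasing $\TV$, and this preserves both the constraint $w\equiv1$ on $\Omega_1$ and $w\equiv0$ outside $\Omega_0$ as well as the integral, because $|E_{s_0}|$ is between $|\Omega_1|$ and $|\Omega_0|$ — actually one should check the integral is preserved under truncation, which holds once $w$ already has the right integral and we argue level set by level set), the function $\tilde u$ still has range in $[0,1]$, equals $1$ on $\Omega_1$, vanishes outside $\Omega_0$, and $\int \tilde u = s_0|E_{s_0}| + \int (u-s_0)^+ $.

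The key computation is the coarea-type additivity of the total variation across the level $s_0$. For a function of the form $g = s_0 w + (u-s_0)^+$ with $0 \le w \le 1$ and $0 \le u \le 1$, the super-level sets are $\{g > t\} = \{w > t/s_0\}$ for $t < s_0$ and $\{g > t\} = \{u > t\}$ for $t \ge s_0$, so the coarea formula \eqref{eq:coarea} gives
\begin{equation}
\TV(g) = \int_0^{s_0} \per(w > t/s_0)\dd t + \int_{s_0}^{1}\per(u>t)\dd t = s_0 \TV(w) + \int_{s_0}^1 \per(E_t)\dd t.
\end{equation}
Applying this identity both to $\tilde u$ (with the competitor $w$) and to $u$ itself (which corresponds to $w = 1_{E_{s_0}}$, so the first term is $s_0\per(E_{s_0})$) and subtracting, we get $\TV(\tilde u) - \TV(u) = s_0\big(\TV(w) - \per(E_{s_0})\big) < 0$. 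It remains to fix up the integral constraint: $\tilde u$ has the wrong average unless $\int_{\R^2} w = |E_{s_0}|$, which is exactly the constraint defining $\mathcal{A}_{|E_{s_0}|}$, so in fact $\int \tilde u = \int u = \nu$ and $\tilde u \in \mathcal{A}_\nu(\Omega_0,\Omega_1)$, giving the contradiction.

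The main obstacle is the role of the Lebesgue-point hypothesis on $s_0$. The clean coarea splitting above secretly uses that $t \mapsto \per(E_t)$ and $t\mapsto|E_t|$ behave well at $s_0$; without it, the "gluing" of the two pieces of the graph of $u$ at height $s_0$ could create or destroy perimeter on the set $\{u = s_0\}$, and the naive identity $\TV(u) = s_0\per(E_{s_0}) + \int_{s_0}^1\per(E_t)\dd t$ need not hold (it is really $\TV(u) = \int_0^1 \per(E_t)\dd t$, and one must pass from this to the split form). So the careful part is to justify, using that $s_0$ is a Lebesgue point, that replacing $u$ by $\min(u,s_0) = s_0 1_{E_{s_0}} + \text{(lower part)}$ does not lose track of any perimeter — i.e. that $\TV(\min(u,s_0)) = \int_0^{s_0}\per(E_t)\dd t$ with the \emph{right} constant, and symmetrically for the upper part; alternatively, one shows the function $\phi(s):=\TV(\min(u,s)) + \TV((u-s)^+) - \TV(u)$ vanishes at Lebesgue points of $s\mapsto\per(E_s)$ by an approximation/averaging argument over a small interval around $s_0$. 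I would handle this by first proving the decomposition $\TV(u) = \TV(\min(u,s_0)) + \TV((u - s_0)^+)$ at every Lebesgue point $s_0$ (an elementary consequence of coarea plus the fact that almost-every level contributes a ``measure-zero slice''), and only then run the contradiction argument above, which becomes purely formal once this decomposition and the truncation reduction are in hand.
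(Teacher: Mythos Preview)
Your splicing construction $\tilde u = s_0 w + (u-s_0)^+$ breaks down at two points. First, the level-set identity $\{g>t\}=\{w>t/s_0\}$ for $t<s_0$ (and $\{g>t\}=\{u>t\}$ for $t\gs s_0$) is false unless $w\equiv 1$ on $E_{s_0}$; but the competitor $w\in\mathcal A_{|E_{s_0}|}(\Omega_0,\Omega_1)$ is only forced to equal $1$ on $\Omega_1\subset E_{s_0}$, so $(u-s_0)^+$ can be supported where $w<1$ and the two pieces no longer stack. Second, and more basically, your $\tilde u$ discards the values of $u$ on $\{0<u\ls s_0\}$: one has $\int u=\int_{\{u\ls s_0\}}u+s_0|E_{s_0}|+\int(u-s_0)^+$, so $\int\tilde u=\nu-\int_{\{u\ls s_0\}}u\neq\nu$ in general, and $\tilde u\notin\mathcal A_\nu$. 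For the same reason $u$ itself is \emph{not} recovered by the choice $w=1_{E_{s_0}}$ (that gives $u\,1_{E_{s_0}}$, not $u$), so the subtraction $\TV(\tilde u)-\TV(u)=s_0(\TV(w)-\per(E_{s_0}))$ has no basis. The decomposition $\TV(u)=\TV(\min(u,s_0))+\TV((u-s_0)^+)$ that you worry about in the last paragraph is actually true for \emph{every} $s_0$ by coarea; it is not where the Lebesgue point enters, and proving it does not repair the argument.

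The paper avoids splicing at a single level altogether. It first shows that for any $s_1<s_2$ the normalized truncation $(u_{[s_1,s_2]}-s_1)/(s_2-s_1)$ is itself a $\TV$-minimizer in $\mathcal A_{\mu}(\Omega_0,\Omega_1)$ for the appropriate $\mu$; this is where the ``inner'' minimality is cleanly inherited, with no stacking issue. Then one takes $[s_1,s_2]=[s_0-h,s_0+h]$: by coarea the normalized truncation has total variation $\frac{1}{2h}\int_{s_0-h}^{s_0+h}\per(E_s)\,\d s$ and mass $\nu_h=\frac{1}{2h}\int_{s_0-h}^{s_0+h}|E_s|\,\d s$, and the Lebesgue-point hypothesis is used exactly here, to make these two averages close to $\per(E_{s_0})$ and $|E_{s_0}|$. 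A hypothetical competitor $u_0\in\mathcal A_{|E_{s_0}|}$ with $\TV(u_0)\ls\per(E_{s_0})-\eps$ is then adjusted by adding $\alpha 1_B$ on a tiny ball to match the mass $\nu_h$, at a perimeter cost $\ls\eps/2$, contradicting minimality of the slab restriction. If you want to rescue the single-level idea, you would have to force $w\equiv 1$ on $E_{s_0}$ and simultaneously correct the integral, which essentially rebuilds the slab argument.
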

The proofs of these two lemmas are located after the proof of Theorem \ref{thm:3val}.
\begin{proof}[Proof of Theorem \ref{thm:3val}]
\textbf{Step 1. A minimizer with finite range.}

To begin the proof, we assume that we are given a minimizer $u$ of the total variation in $\BV_\diamond$, that is, a solution of Problem \ref{pr:1}. We represent $\Omega_s$ by its connected components $\Omega^i_s$, $i=1,\ldots,N$,
\begin{equation}\Omega_s = \bigcup_{i=1}^N \Omega^i_s.\label{eq:domdec}\end{equation}
Since $u$ belongs to $\BV_\diamond$, $u$ is constant on every $\Omega^i_s$, and we introduce the constants $\gamma_i$ such that
\begin{equation}
\restr{u}{\Omega^i_s}=\gamma_i.\label{eq:vi}\end{equation}
We can assume that $\gamma_i \ls \gamma_{i+1}$. Note that the constraint \eqref{eq:normalize} reads
\begin{equation}
\frac{1}{\sum_{i=1}^n |\Omega^i_s|} \sum_{i=1}^N \gamma_i |\Omega^i_s| = 1.
\label{eq:discconstr}
\end{equation}

Defining
$$u_i := u \cdot 1_{\{\gamma_i < u < \gamma_{i+1}\}} + \gamma_i 1_{\{u \ls \gamma_i\}} + \gamma_{i+1} 1_{\{u \gs \gamma_{i+1}\}},$$
we have
$$u = \sum_{i=1}^N \big( u_i - \gamma_i \big).$$
Notice that each $u_i$ minimizes the total variation among functions with fixed integral $\int_\Omega u_i$, and satisfying the boundary conditions $u = \gamma_i$ on $\{u \ls \gamma_i\}$ and $u=\gamma_{i+1}$ on $\{u \gs \gamma_{i+1}\}$. 

As a result, the function $v_i := \frac{u_i - \gamma_i}{\gamma_{i+1} - \gamma_i} $ minimizes the total variation with constraints $\restr{v_i}{\R^2 \setminus \{u > \gamma_i\}} \equiv 0$,  $\restr{v_i}{\{u \gs \gamma_{i+1}\}} \equiv 1$ and prescribed integral. Lemma \ref{lem:5val} (applied with $\Omega_0 =  \{u > \gamma_i\}$ and $\Omega_1 = \{u \gs \gamma_{i+1} \}$)
shows that $v_i$ can be replaced by a five level-set function $\tilde v_i$ which has total variation smaller or equal to $\TV(v_i)$. Hence $u_i$ can be replaced by the five level-set function $\tilde u_i := \gamma_i + \tilde v_i (\gamma_{i+1} - \gamma_i)$ without increasing the total variation.

Therefore, the finitely-valued function
$$ \tilde u :=  \sum_{i=1}^N \big( \tilde u_i - \gamma_i \big)$$ is again a solution of Problem \ref{pr:1} (the functions $u$ and $\tilde u$ coincide on $\Omega_s$, so the constraint $\fint_{\Omega_s} \tilde u =1$ is satisfied).

\textbf{Step 2. Construction of a three-valued minimizer.}

Step 1 provides a solution $\tilde u$ of Problem \ref{pr:1} that reaches a finite number (denoted as $p+1$) of values. We denote its range (listed in increasing order) by 
$$\{ \gamma_{p^-} , \cdots, \gamma_{-1}, 0, \gamma_1, \cdots,\gamma_{p^+} \}$$
where $p^- \ls 0 \ls p^+$, $p^+ - p^- = p$  and $\gamma_i < 0$ for $i< 0$ and $\gamma_i > 0$ for $i >0.$

Let us now define, for $i <0$, $E_i := \{\tilde u \ls \gamma_i\}$ and $\alpha_i := \gamma_i - \gamma_{i+1}$ and for $i > 0$, $E_i := \{ \tilde u \gs \gamma_i\}$ and $\alpha_i := \gamma_i - \gamma_{i-1}.$
The function $\tilde u$ then writes
\begin{equation} \tilde u = \sum_{\substack{ i = p^- \\ \mkern-15mu i \neq 0}}^{p^+} \alpha_i 1_{E_i}\label{eq:pwansatz}\end{equation} where $E_i \subset E_j$ whenever $i<j<0$ or $i>j>0.$

We also have
\begin{align}
\TV(\tilde u) &= \sum_{\substack{ i = p^- \\ \mkern-15mu i \neq 0}}^{p^+} |\alpha_i| \per(E_i),\label{eq:tvdisc} \\ 
\int_{\Omega} \tilde u &=\sum_{\substack{ i = p^- \\ \mkern-15mu i \neq 0}}^{p^+} \alpha_i |E_i|, \label{eq:intdisc} \\ 
\int_{\Omega_s} \tilde u &= \sum_{\substack{ i = p^- \\ \mkern-15mu i \neq 0}}^{p^+} \alpha_i |E_i^s| \label{eq:intsdisc}
\end{align}

where $E_i^s = E_i \cap \Omega_s$.

Since $\tilde u$ is a solution to Problem \ref{pr:1}, the collection $(\alpha_i)$ minimizes $\sum_i |\alpha_i| \per(E_i) $ with constraints
$$\sum_{\substack{ i = p^- \\ \mkern-15mu i \neq 0}}^{p^+} \alpha_i |E_i| = 0 \quad \text{and} \quad \sum_{\substack{ i = p^- \\ \mkern-15mu i \neq 0}}^{p^+} \alpha_i |E_i^s| = |\Omega_s|$$
as well as $\alpha_i < 0$ for $i< 0$ and $\alpha_i >0$ for $i>0.$ The constraint on the sign of the $\alpha_i$ is made such that the formula \eqref{eq:tvdisc} holds. Indeed, if the $\alpha_i$ change signs, the right hand side of \eqref{eq:tvdisc} is only an upper bound for $\TV(\tilde u)$.

Introducing the vectors
\begin{equation*}
 \begin{aligned}
  a &= (\per(E_{p^-}), \cdots, \per(E_{p^+})),\\
  b &= (|E_{p^-}^s|,\cdots,|E_{p^+}^s|),\\
  c &= (|E_{p^-}|,\cdots,|E_{p^+}|),\\
  x &= (\alpha_{p^-}, \cdots,\alpha_{p^+})\,,
 \end{aligned}
\end{equation*}
minimizing \eqref{eq:tvdisc} for $\tilde u$ of the form \eqref{eq:pwansatz} and with the constrained mentioned above is reformulated into finding a minimizer of 
\begin{equation*}
 \begin{aligned}
  &(a,x) \to \left \vert a^T |x| \right \vert_{\ell^1}\,,\\
  x &\text{ s.t. }b^T x =|\Omega_s| \text{ and } c^T x = 0\;.
 \end{aligned}
\end{equation*}
Denoting by $\sigma \in \{-1,1\}^{p} \subseteq \R^{p}$ indexed by $i \in \{p^-, \cdots,p^+\}$ with $\sigma_i = -1$ for $i< 0 $ and $\sigma_i = 1$ for $i>0$,  this minimization problem can be rewritten as
\begin{equation} \label{eq:linprog} \min_{ x \in \R^{p+1}} 
\left \{ a^T (\sigma:x) = (\sigma : a)^T x \ \middle \vert \ b^T x = |\Omega_s|,\ c^T x = 0,\ \sigma:x \gs 0 \right\},\end{equation}
where $\sigma:x := (x_1 \sigma_1, \cdots, x_p \sigma_p, x_{p+1} \sigma_{p+1})$. The space of constraints is then a (possibly empty) polyhedron given by the intersection 
of the quadrant $\sigma:x \gs 0$ with the two hyperplanes $c^T x = 0$ and $b^T x =|\Omega_s|$. Now for a point of a polyhedron in $\R^{p}$ to be a vertex, we must have that at least $p$ constraints are active at it. Therefore, at least $p-2$ of these constraints should be of those defining the quadrant $\sigma : x \gs 0$, meaning that at a vertex, at least $p-2$ coefficients of $x$ are zero.

This polyhedron could be unbounded, but since $a \gs 0$ and 
$\sigma:x \gs 0$ componentwise, the minimization of $a^T (\sigma:x)$ must have at least one solution in it. 
Moreover, since it is contained in a quadrant ($\sigma : x \gs 0$), it clearly does not contain any line, so it must have at least one vertex (\cite[Theorem 2.6]{BerTsi97}). 
Since the function to minimize is linear in $x$, it has a minimum at one such vertex 
(\cite[Theorem 2.7]{BerTsi97}). That proves the existence of a 
minimizer of \eqref{eq:linprog}  with at least $p-2$ of the $(\alpha_i)$ being zero. This corresponds to a minimizer for Problem \ref{pr:1} which has only two level-sets with nonzero values, finishing the proof of Theorem \ref{thm:3val}.\end{proof}

\subsubsection{Proof of Lemma \ref{lem:5val}}
\begin{proof}[Proof of Lemma \ref{lem:5val}]
For conciseness, we denote the set $\mathcal{A}_\nu(\Omega_0, \Omega_1)$ by $\mathcal{A}$. Let $w$ be an arbitrary minimizer of $\TV$ in $\mathcal{A}$. Splitting $w$ at $0$ and $1$ we can write 
\begin{equation}\label{eq:wdecomp}w = (w^{1+}-1) + w^{(0,1)} - w^-\end{equation}
with $w^{1+} := w \cdot 1_{w \gs 1} + 1_{w < 1}$, $w^{(0,1)} = w \cdot 1_{0 \ls w \ls 1} + 1_{w > 1}$, and $w^-$ the usual negative part. We see from the coarea formula that 
\begin{align*}\TV(w) &= \int_{s \ls 0} \per(w \gs s) + \int_{0<s< 1} \per(w \gs s) + \int_{s \gs 1} \per(w \gs s) \\ 
&= \TV(w^-) + \TV(w^{(0,1)}) + \TV(w^{1+}).\end{align*}
With this splitting, $w^-$ can be seen to be a minimizer of $\TV$ over 
\begin{equation*}
 \mathcal{A}^- := 
 \set{v \in \BV(\R^2) \ \middle \vert \ v=0 \text{ on } \set{w > 0} \cup \R^2 \backslash \Omega_0, \ \int_\Omega v = \int_\Omega w^-}.
\end{equation*}

By Theorem \ref{thm:cheeger}, almost every level set of $w^-$ is a Cheeger set of 
$\Omega_0 \setminus \{w >0\}$, the complement of $\set{w > 0} \cup \R^2 \backslash \Omega_0$. In particular, if we replace $w^-$ by $\frac{\int_\Omega w^-}{|\mathcal C_0|}1_{\mathcal C_0}$, where $\mathcal C_0$ is one such Cheeger set, the total variation doesn't increase. Therefore, there exists a minimizer $\tilde w^-$ of $\TV$ on $\mathcal{A}^-$ that reaches only one non-zero value.

With an analogous argumentation we see that, because $w^{1+}$ minimizes $\TV$ on the set
\begin{equation*}
 \mathcal{A}^{1+} := \set{v \in \BV(\R^2) \ \middle \vert \ v = 1 \text{ on } \{w < 1\}, \ \int_{\Omega} v = \int_{\Omega} w^{1+}},
\end{equation*}
there exists a minimizer $\tilde w^{1+}$ that writes
$$\tilde w^{1+} = 1 + \zeta 1_{\mathcal C_1}$$
where $\mathcal C_1$ is a Cheeger set of $\{w \gs 1\}$ and $\zeta \gs 0$ is a constant.

Moreover, defining $$\mu := \int_\Omega w^{(0,1)},$$ $w^{(0,1)}$ minimizes $\TV$ on the set 
\begin{equation*}
 \mathcal{A}^{(0,1)}_\mu := \set{ v \in \BV(\R^2) \ \middle \vert \ v = 1 \text{ on } \{w \gs 1\}, v = 0 \text{ on } \{w \ls 0\} \text{ and } \int_\Omega v = \mu}.\label{eq:A01}
\end{equation*}
The remainder of the proof consists in showing that there exists a minimizer of $\TV$ in $\mathcal{A}^{(0,1)}_\mu$ that attains only three values. 
Since $w^{(0,1)}$ is one of them, there exists some minimizer of $\TV$ in $\mathcal{A}^{(0,1)}_\mu$ with values in $[0,1]$. We denote by $u$ a generic one.
In what follows, we denote by $E_s := \{u > s\}$ the level-sets of $u$.

Noticing that $\mathcal{A}^{(0,1)}_\mu=\mathcal{A}_\mu(\{w \ls 0\},\{w \gs 1\})$, we can use Lemma \ref{lem:minTV} to obtain that for almost every $s$, $1_{E_{s}}$ minimizes $\TV$ in $\mathcal{A}^{(0,1)}_{|E_{s}|}$. That implies in particular that for a.e. $s$, $E_s$ minimizes perimeter with fixed mass. We introduce $E_s^{(1)}$ the set of points of density $1$ for $E_s$ and $E_s^{(0)}$ the set of points of density 0 for $E_s$, that is
$$ E_s^{(1)} := \set{x \in \Omega \, \middle \vert \, \lim_{r \to 0} \frac{ |E_s \cap B_r(x)|}{|B_r(x)|} = 1} \qquad \text{and} \qquad E_s^{(0)} := \set{x \in \Omega \, \middle \vert \, \lim_{r \to 0} \frac{ |E_s \cap B_r(x)|}{|B_r(x)|} = 0}.$$
Lebesgue differentiation theorem implies that $E_s^{(1)} = E_s$ and $E_s^{(0)} = \Omega \setminus E_s$ a.e. 

Now, since the level-sets are nested, the function $s \mapsto |E_s|$ is nonincreasing. Therefore, there exists $s_\mu$ such that $$\text{for }s > s_\mu, \,|E_s| \ls \mu \text{, and for }s < s_\mu,\, |E_s| \gs \mu.$$
Let us now define
$$E^+ := \bigcup_{s > s_\mu} E^{(1)}_s \qquad \text{and} \qquad E^- := \bigcap_{s < s_\mu} \Omega \setminus E^{(0)}_s.$$
We then have the following fact, to be proved below:
\begin{claim}
If $E^\pm$ is not empty, $1_{E^\pm}$ minimizes total variation in $\mathcal{A}^{(0,1)}_{|E^\pm|}$, with $|E^+| \ls \mu \ls |E^-|.$
\end{claim}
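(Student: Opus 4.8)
The plan is to treat $E^+$ and $E^-$ symmetrically and focus on $E^+$ (the argument for $E^-$ is entirely analogous, working with the nested family $\Omega\setminus E^{(0)}_s$ for $s<s_\mu$ instead of $E^{(1)}_s$ for $s>s_\mu$). First I would verify the measure bounds: since $E^+=\bigcup_{s>s_\mu}E^{(1)}_s$ is an increasing union (in $s\searrow s_\mu$) of sets with $|E_s|\ls\mu$, monotone convergence gives $|E^+|=\lim_{s\searrow s_\mu}|E_s|\ls\mu$, and symmetrically $|E^-|=\lim_{s\nearrow s_\mu}|E_s|\gs\mu$. Next, because $E_s\subset\{w\gs1\}^{(0)}$-complement and $E_s\supset\{w\gs 1\}$-ish via the constraints in $\mathcal A^{(0,1)}_\mu$ — more precisely, since each $1_{E_s}$ lies in $\mathcal A^{(0,1)}_{|E_s|}=\mathcal A_{|E_s|}(\{w\ls0\},\{w\gs1\})$ — we have $\{w\gs1\}\subset E_s\subset\Omega\setminus\{w\ls0\}$ up to null sets for a.e. $s\in(s_\mu,1)$; passing to the union preserves these inclusions, so $1_{E^+}\in\mathcal A^{(0,1)}_{|E^+|}$ once we know $E^+$ is a set of finite perimeter, which will follow from the total variation bound established next.

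The heart of the matter is the total variation estimate $\per(E^+)\ls\liminf_{s\searrow s_\mu}\per(E_s)$, together with the fact that the right-hand side does not exceed the optimal value over $\mathcal A^{(0,1)}_{|E^+|}$. For the first inequality I would use that $1_{E_s}\to 1_{E^+}$ in $L^1$ as $s\searrow s_\mu$ (by the measure convergence just established and nestedness) and invoke lower semicontinuity of the total variation from Theorem~\ref{thm:compact}. For the second, the key input is Lemma~\ref{lem:minTV}: for a.e. $s\in(s_\mu,1)$, $1_{E_s}$ minimizes $\TV$ in $\mathcal A^{(0,1)}_{|E_s|}$, so $\per(E_s)$ equals the optimal value $m(|E_s|):=\min\{\TV(v)\mid v\in\mathcal A^{(0,1)}_{|E_s|}\}$. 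One then needs continuity (or at least upper semicontinuity from the right) of the value function $m(\cdot)$ at the point $|E^+|$; this can be obtained by an explicit competitor construction — given any admissible $v$ for $\mathcal A^{(0,1)}_{|E^+|}$, modify it slightly to adjust the integral from $|E^+|$ to $|E_s|$ while controlling the extra perimeter, for instance by adding or removing a thin collar near a point of positive density, so that $m(|E_s|)\ls m(|E^+|)+o(1)$ as $s\searrow s_\mu$. Combining, $\per(E^+)\ls\liminf_s\per(E_s)=\liminf_s m(|E_s|)\ls m(|E^+|)$, and since $1_{E^+}$ is itself admissible for $\mathcal A^{(0,1)}_{|E^+|}$ the reverse inequality is automatic, giving that $1_{E^+}$ is a minimizer.

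The main obstacle I anticipate is the continuity of the value function $m(\cdot)$, equivalently the competitor surgery needed to change the prescribed mass by an infinitesimal amount at bounded perimeter cost: one must ensure the modification respects the Dirichlet-type constraints $v=1$ on $\{w\gs1\}$ and $v=0$ on $\{w\ls0\}$, which requires performing the surgery in the interior region $\Omega\setminus(\{w\gs1\}\cup\{w\ls0\})$, and one must check this region has positive measure near the relevant density points (if it were null the problem would be degenerate but then $E^+$ would be forced and the claim trivial). A cleaner alternative that sidesteps fine continuity estimates is to argue directly: take $u$ a minimizer in $\mathcal A^{(0,1)}_\mu$ with values in $[0,1]$ and note $u\gs 1_{E^+}$ pointwise a.e. by construction of $E^+$; then use the layer-cake decomposition $u=\int_0^1 1_{E_s}\,ds$ and the fact, from Lemma~\ref{lem:minTV}, that each slice is perimeter-minimizing for its own mass, to transfer optimality of the "average" to the specific level $E^+$ — this is the same convexity-of-slices mechanism used in Step~1 of the proof of Theorem~\ref{thm:3val}, and is likely how the authors proceed. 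Either route, the remaining bookkeeping ($|E^+|\ls\mu\ls|E^-|$, admissibility) is routine.
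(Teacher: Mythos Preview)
Your main approach is correct and essentially matches the paper's proof: $L^1$ convergence of $1_{E_{s_n}}$ to $1_{E^+}$ along a sequence $s_n\searrow s_\mu$, lower semicontinuity of perimeter, Lemma~\ref{lem:minTV} for a.e.\ $s$, and a small-surgery contradiction argument to handle continuity of the value function (the paper removes a tiny ball $B_n$ from a hypothetical better competitor $v$ so that $\int v\cdot 1_{\Omega\setminus B_n}=|E_{s_n}|$, exactly the kind of mass-adjustment you describe, and also inserts a short separate step bounding $\per(E_{s_n})$ via a convex-combination competitor). Your guess that the authors instead use the layer-cake ``cleaner alternative'' is off --- they take precisely your first route.
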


To finish the proof of Lemma \ref{lem:5val}, we distinguish two alternatives. Either $E^+$ or $E^-$ has mass $\mu$, in which case the claim above implies Lemma \ref{lem:5val}, or $E^\pm$ are both nonempty and
$$ |E^+| < \mu \qquad \text{and} \qquad |E^-| > \mu.$$
In the second case, let $s< s_\mu$. Then, $|E^-| \in (|E^+|,|E_s|)$ and there exists $t = \frac{|E^-| - |E^+|}{|E_s|-|E^+|}$ such that $|E^-| = t |E_s| + (1-t) |E^+|.$ The function $t 1_{E_s} + (1-t) 1_{E^+}$ therefore belongs to $\mathcal{A}^{(0,1)}_{|E^-|}$. Since $1_{E^-}$ is a minimizer of $\TV$ in this set, one must have
$$ \per(E^-) \ls \TV(t 1_{E_s} + (1-t) 1_{E^+}) \ls  \frac{|E^-| - |E^+|}{|E_s|-|E^+|} \per(E_s) + \frac{|E_s| - |E^-|}{|E_s|-|E^+|} \per(E^+).$$
This equation rewrites
\begin{equation} \per(E_s) \gs \frac{|E_s| - |E^+|}{|E^-| - |E^+|} \per(E^-) + \frac{|E^-| - |E_s|}{|E^-| - |E^+|} \per(E^+). \label{eq:perEs}\end{equation}
Similarly, if $s > s_\mu$, one has $|E_s| < |E^+|$ and $|E^+|$ is a convex combination of $\{|E^-|, |E_s|\}.$ The same steps lead to the same \eqref{eq:perEs}.
Finally, one just write (we use \eqref{eq:perEs}, the coarea and the layer-cake formulas)
\begin{align*}
\TV(u) &= \int_0^1 \per(E_s) \gs \int_0^1 \frac{\left( |E_s| - |E^+| \right)\per(E^-) + \left(|E^-| - |E_s| \right)  \per(E^+) }{|E^-| - |E^+|} \\
& \gs \int_0^1 \frac{\per(E^-) - \per(E^+)}{|E^-| - |E^+|} |E_s| +  \frac{|E^-|  \per(E^+) - |E^+|\per(E^-) }{|E^-| - |E^+|} \\
& = \frac{\per(E^-) - \per(E^+)}{|E^-| - |E^+|} \mu +  \frac{|E^-|  \per(E^+) - |E^+|\per(E^-) }{|E^-| - |E^+|} \\
& = \TV\left(\lambda 1_{E^-} + (1-\lambda) 1_{E^+} \right)
\end{align*}
with $\lambda = \frac{\mu - |E^+|}{|E^-| - |E^+|}$. 

As a result, one can replace $w^{(0,1)}$ in the decomposition \eqref{eq:wdecomp} by a three valued minimizer $\tilde w^{(0,1)}$ of $\TV$ in $\mathcal{A}^{(0,1)}_\mu$. Therefore, combining the three modified parts we see that there exists a minimizer in $\mathcal{A}$
$$\tilde w := (\tilde w^{1+}-1) + \tilde w^{(0,1)} - \tilde w^- $$
which attains at most five values.
\end{proof}

\begin{proof}[Proof of claim]
By Lemma \ref{lem:minTV}, $1_{E_s^{(1)}}$ minimizes total variation in $\mathcal{A}^{(0,1)}_{|E_s|}$ for almost every $s$. Then, let us select a decreasing sequence $s_n \searrow s_\mu$ such that for each $n$, $1_{E_{s_n}^{(1)}}$ minimizes total variation in $\mathcal{A}^{(0,1)}_{|E_{s_n}|}$. Since $E_{s_n}^{(1)} \to E^+$ in $L^1$, one has $|E^+| = \lim |E_{s_n}^{(1)}| = \lim |E_{s_n}|$ and the semicontinuity for the perimeter gives
$$\per(E^+) \ls \liminf \per(E_{s_n}^{(1)}).$$
In fact, the sequence $\per(E_{s_n}^{(1)})$ is bounded. To see this, we fix a value $\hat s < s_\mu$ and since $E_{s_1} \subset E_{s_n}^{(1)} \subset E_{\hat s}$ we can write for some $t_n \in (0,1)$
$$|E_{s_n}^{(1)}|=t_n |E_{\hat s}| + (1-t_n) |E_{s_1}|.$$
Therefore, applying Lemma \ref{lem:minTV} again we obtain
$$\per(E_{s_n}^{(1)}) \ls \TV\left(t_n 1_{E_{\hat s}} + (1-t_n) 1_{E_{s_1}}\right) \ls \per(E_{\hat s})+\per(E_{s_1}).$$

Now, let us assume that there exists $v \in \BV(\Omega)$ with $\int v = |E^+|$ and $\TV(v) < \per(E^+) - \varepsilon$. By the above, for every $\delta >0$ we can find $n$ such that $ |E^+| \gs |E_{s_n}| \gs |E^+| - \delta$  and
$$\per(E^+) \ls \per(E_{s_n}^{(1)}) + \delta.$$
Now, if $\delta < \varepsilon/10$ is small enough, we can find a ball $B_n \subset \Omega$ such that $\int_\Omega v \cdot 1_{\Omega \setminus B_n} = |E_{s_n}|$ and $\Vert v \Vert_\infty \per(B_n) \ls \varepsilon/10$, so we get 
\begin{equation}
\begin{aligned}\TV(v \cdot 1_{\Omega \setminus B_n}) &\ls \TV(v) + \Vert v \Vert_\infty \per(B_n) \ls \per(E^+) -\varepsilon + \Vert v \Vert_\infty \per(B_n) \\
&\ls \per(E_{s_n}^{(1)}) + \Vert v \Vert_\infty \per(B_n) + \delta - \varepsilon \ls \per(E_{s_n}^{(1)}) - \frac{\varepsilon}{2},\end{aligned}\end{equation}
and therefore we get a contradiction with the $\TV$-minimality of $E_{s_n}^{(1)}$.

Selecting an increasing sequence $\tilde s_n \nearrow s_\mu$ and such that $\Omega \setminus E_{s_n}^{(0)}$ minimizes $\TV$ in $\mathcal{A}^{(0,1)}_{|E_{s_n}|}$, we obtain similarly that $1_{E^-}$ minimizes $\TV$ in $\mathcal A^{(0,1)}_{|E^-|}.$
\end{proof}
\subsubsection{Proof of Lemma \ref{lem:minTV}}
\begin{proof}[Proof of Lemma \ref{lem:minTV}]
Since the arguments $\Omega_0, \Omega_1$ are fixed for the course of this proof, we will denote the sets $\mathcal{A}_\tau(\Omega_0, \Omega_1)$ by $\mathcal{A}_\tau$ for each $\tau>0$. First, note that for every $s_1 < s_2$, the function
$$ u_{[s_1,s_2]} := s_2 1_{E_{s_2}} + u \cdot 1_{[s_1,s_2]} + s_1 1_{u<s_1}$$ 
is such that $ v:= \frac{u_{[s_1,s_2]} - s_1}{s_2 - s_1}$ minimizes the total variation in $\mathcal{A}_{\int v}$. Indeed, if $\hat v \in \mathcal{A}_{\int v}$ with $\TV(\hat v) < \TV(v)$, then $\TV(\hat v(s_2-s_1)+s_1) < \TV(u_{[s_1,s_2]})$. Since $u = (u \cdot 1_{u< s_1} -s_1) + u_{[s_1,s_2]} + (u \cdot  1_{u> s_2} - s_2)$, then we would have 
\begin{align*} \TV(u) &= \TV((u \cdot 1_{u< s_1} -s_1)) + \TV(u_{[s_1,s_2]}) + \TV((u \cdot  1_{u> s_2} - s_2)) \\
 & >  \TV((u \cdot 1_{u< s_1} -s_1)) + \TV(\hat v(s_2-s_1)+s_1) + \TV((u \cdot  1_{u> s_2} - s_2)) \\
 & \gs \TV\big((u \cdot 1_{u< s_1} -s_1) + (\hat v(s_2-s_1)+s_1) + (u \cdot  1_{u> s_2} - s_2)\big),
\end{align*}
where $(u \cdot 1_{u< s_1} -s_1) + (\hat v(s_2-s_1)+s_1) + (u \cdot  1_{u> s_2} - s_2) \in \mathcal{A}_{\nu}$, which is a contradiction with the minimality of $u$.

Letting $s_0$ as in the assumptions, we have just seen that for every $h >0$, $\frac{u_{[s_0 - h, s_0 + h]} - (s_0-h)}{2h}$ minimizes the total variation in $\mathcal{A}_{\nu_h}$  with
\begin{align*} 
\nu_h := \frac{  \int_{s_0-h  \ls u \ls s_0 + h} (u - (s_0-h))}{2h} + |E_{s_0 + h}| &= \frac{1}{2h} \int_{s_0 - h}^{s_0 + h} | \{ u > t\} \cap \{u \ls s_0 + h\} | \dd t +|E_{s_0 + h}| \\
&= \frac{1}{2h} \int_{s_0 - h}^{s_0 + h} | \{ u > t\} | \dd t = \frac{1}{2h} \int_{s_0 - h}^{s_0 + h} | E_s | \dd s.
\end{align*}
On the other hand, the total variation of $\frac{u_{[s_0 - h, s_0 + h]} - (s_0-h)}{2h}$ writes, using the coarea formula,
$$ \frac{1}{2h} \int_{s_0 - h}^{s_0 + h} \per(E_s).$$

Finally, let us assume that $1_{E_{s_0}}$ does not minimize total variation in $\mathcal{A}_{|E_{s_0}|}$. Then, there would exist $\eps > 0$ and $u_0 \in \mathcal{A}_{|E_{s_0}|}$ such that
$$ \per(E_{s_0}) \gs \TV(u_0) + \eps.$$
Since $s_0$ is a Lebesgue point, one can find $\delta >0$ such that for every $h \ls \delta$, 
$$ \left \vert \frac{1}{2h} \int_{s_0 - h}^{s_0 + h} \per(E_s) \dd s - \per(E_{s_0})  \right \vert \ls \frac{\eps}{10} \qquad \text{and} \qquad \left \vert \frac{1}{2h} \int_{s_0 - h}^{s_0 + h} | E_s | \dd s - |E_{s_0}| \right \vert \ls \frac{\eps}{10} .$$
Let $h\ls \delta$ and $B$ be a ball such that $\per(B) \ls \frac{\eps}{4 \Vert u_0 \Vert_\infty}$. There exists $\alpha$ such that the function $u_0 + \alpha 1_B$ satisfies
\begin{equation} \int u_0 + \alpha 1_B = \frac{1}{2h} \int_{s_0 - h}^{s_0 + h} | E_s | \dd s. \label{eq:LebMas} \end{equation}
Reducing $h$ if needed, one can enforce that $|\alpha| \ls 2 \Vert u_0 \Vert_\infty$.

Then,
$$ \TV(u_0 + \alpha 1_B) \ls \TV(u_0) + \alpha \per(B) \ls \per(E_{s_0}) - \eps + \alpha \per(B) \ls \frac{1}{2h} \int_{s_0 - h}^{s_0 + h} \per(E_s) \dd s - \frac{4 \eps}{10} ,$$
which contradicts the minimality of $u_{[s_0 - h, s_0+h]}$ and proves the claim.
\end{proof}

\subsection{Minimizers with connected level-sets}
In this subsection, we refine our analysis slightly, and show the existence of three-valued minimizers for Problem \ref{pr:1} with additional properties. We start with the following definition:
\begin{definition}\label{def:indecomp}A set of finite perimeter $A$ is called indecomposable, if there are no two disjoint finite perimeter sets $B, C$ such that $|B|>0$, $|C|>0$, $A = B \cup C$ and $\per(A)=\per(B)+\per(C)$.
\end{definition}
This notion is in fact a natural measure-theoretic sense of connectedness for sets for finite perimeter, for more information about it see \cite{AmbCasMasMor01}.
\begin{rem}By computing the Fenchel dual of Problem \ref{pr:1}, it can be seen that the non-zero level-sets of any solution are minimizers of the functional
$$E \mapsto \per(E) - \int_{\Omega \setminus \Omega_s} k, \text{ with } k \in L^2(\Omega \setminus \Omega_s).$$
This optimality property in turn implies lower bounds only depending on $k$ for the perimeter and mass of $E$, and in case it can be decomposed in the sense of Definition \ref{def:indecomp}, the same lower bounds also hold for each set in such a decomposition. In consequence, $E$ can only be decomposed in at most a finite number of sets. The proof of these statements relies heavily on the results of \cite{AmbCasMasMor01}, and is presented in \cite{ChaDuvPeyPoo17} for the unconstrained case,  and \cite{IglMerSch17} for the case with Dirichlet constraints, as used here.
\end{rem}
Assuming these results, one can simplify the level sets of solutions further:
\begin{thm}
There exists a minimizer for Problem \ref{pr:1} attaining exactly three values for which all non-zero level-sets are indecomposable.
\end{thm}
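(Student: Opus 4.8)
The plan is to start from the three-valued minimizer $u = \alpha_+ 1_{E_+} + \alpha_- 1_{E_-}$ produced by Theorem \ref{thm:3val} (where $\alpha_+>0>\alpha_-$ and $E_- \cap E_+$ may overlap, but more conveniently one keeps the nested presentation with $E_+ \subset \Omega \setminus \Omega_s$ on one side and $E_-$ on the other, as in \eqref{eq:pwansatz}), and to show that each of the two non-zero level sets can be replaced by an indecomposable one without changing the value of $\TV$ or violating any constraint. The key input is the remark preceding the theorem: the non-zero level sets of any solution minimize $E \mapsto \per(E) - \int_{\Omega \setminus \Omega_s} k$ for a fixed $k \in L^2(\Omega \setminus \Omega_s)$ coming from Fenchel duality, and consequently such a level set decomposes into at most finitely many indecomposable components, each of which is itself a minimizer of the same functional.

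First I would make the duality statement precise enough to use: writing down the Fenchel dual of Problem \ref{pr:1}, identifying the dual variable, and recording that if $E$ is a non-zero level set of a primal solution then $\per(E) = \int_E k$ (the cell problem optimality), while for any competitor set $F$ one has $\per(F) \gs \int_F k$. Then, given the decomposition $E_+ = \bigcup_{j=1}^{m} E_+^j$ into indecomposable pieces guaranteed by \cite{AmbCasMasMor01,IglMerSch17}, additivity of the perimeter gives $\per(E_+) = \sum_j \per(E_+^j)$ and additivity of the integral gives $\int_{E_+} k = \sum_j \int_{E_+^j} k$; combined with $\per(E_+^j) \gs \int_{E_+^j} k$ and $\per(E_+) = \int_{E_+} k$, each inequality must be an equality, so every component $E_+^j$ is itself an optimal cell, i.e. a $\TV$-minimizer for its own mass constraint. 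The same applies to $E_-$.

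Next comes the recombination step, which I expect to be the only genuinely delicate point. Replacing $E_+$ by a single one of its components $E_+^j$ changes both $\int_\Omega u$ and $\int_{\Omega_s} u$, so one cannot naively keep a single component; instead I would argue as in Step 2 of the proof of Theorem \ref{thm:3val}: among the pieces $E_+^1,\dots,E_+^m$ and $E_-^1,\dots,E_-^{m'}$, with coefficients to be chosen, we again have a finite-dimensional linear program — minimize $\sum |\alpha_i|\per(E_\pm^i)$ subject to the two linear constraints $\int_\Omega u = 0$ and $\fint_{\Omega_s} u = 1$ and the sign constraints — whose feasible set is a pointed polyhedron, hence has a vertex at which at most two of the coefficients are non-zero. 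Since all the pieces entering with non-zero coefficient are optimal cells, the resulting function is still a solution of Problem \ref{pr:1}; and by construction it is three-valued with both non-zero level sets ($E_+^{j}$ and $E_-^{k}$ for the two surviving indices) indecomposable. The one subtlety to check is that the two surviving non-zero coefficients genuinely come from \emph{different} signs, so that we still get exactly three values and not fewer — this is where one invokes $\fint_{\Omega_s} u = 1 \neq 0$ together with $\int_\Omega u = 0$: a purely one-signed combination cannot satisfy both, exactly as in the previous proof. (If the positive part $E_+$ happens to already be disjoint from $\Omega_s$, one must be slightly careful that $\int_{\Omega_s} u$ picks up contributions only from $E_-$, but this does not change the linear-algebra count.)

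The main obstacle, as flagged, is making the "decomposes into finitely many pieces, each optimal" assertion rigorous; the excerpt explicitly offloads this onto \cite{AmbCasMasMor01,ChaDuvPeyPoo17,IglMerSch17}, so in the write-up I would state precisely which lemma from those references is used (finiteness of the indecomposable decomposition plus stability of the cell-problem optimality under decomposition) and then spend the bulk of the argument on the clean linear-programming recombination, which is self-contained given Theorem \ref{thm:3val}'s machinery. Everything else — additivity of perimeter on indecomposable components, measurability, the vertex argument via \cite[Theorems 2.6, 2.7]{BerTsi97} — is routine and can be cited rather than reproved.
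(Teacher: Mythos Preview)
Your approach is correct but takes a different route from the paper. The paper works by direct perturbation: given the three-valued minimizer and a decomposition of (say) the positive level set as $\Omega_1 \cup \Omega_2$, it considers the one-parameter family $u_h = (\alpha+h)1_{\Omega_1} + (\alpha+k(h))1_{\Omega_2} - (\beta+l(h))1_{\Omega_-}$ with $k,l$ determined linearly by the two integral constraints, observes that $\TV(u_h)$ is affine in $h$ (by additivity of perimeter on the decomposition), and concludes from minimality at the interior point $h=0$ that $\TV(u_h)$ is constant; pushing $h$ to an endpoint kills one component. Iterating (using the finiteness from the preceding remark) yields indecomposable level sets. No duality is used.

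Your global linear program over all indecomposable components is a valid alternative and arguably tidier than iterating, but the Fenchel-dual ``optimal cell'' discussion is superfluous: the LP argument stands on its own. The original coefficients $(\alpha_+,\ldots,\alpha_+,\alpha_-,\ldots,\alpha_-)$ are feasible with objective value exactly $\TV(u)$ (by additivity of perimeter on the indecomposable pieces together with the sign constraints), so any vertex has objective $\ls \TV(u)$; the corresponding function lies in $\BV_\diamond$ and has $\TV$ equal to that objective, hence is a minimizer. Your sentence ``since all the pieces entering with non-zero coefficient are optimal cells, the resulting function is still a solution'' is not the right justification --- cell-optimality of the individual pieces is neither needed nor, by itself, sufficient here. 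What you do genuinely need from the remark and \cite{AmbCasMasMor01} is only (i) finiteness of the decomposition, and (ii) that each connected $\Omega_s^i$ lies entirely in a single indecomposable component, so that the constraint $\nabla v = 0$ on $\Omega_s$ survives the recombination.
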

\begin{proof}
First, we consider the positive level-set and assume that it is decomposable in two sets $\Omega_1, \Omega_2$ as in Definition \ref{def:indecomp}. Then the corresponding minimizer $u$ can be written as 
 $$ u = \alpha (1_{\Omega_1} + 1_{\Omega_2}) -  \beta 1_{\Omega_-},$$
where $\alpha, \beta >0$. Consider a perturbation of $u$ of the form
$$u_h = (\alpha +h) 1_{\Omega_1} + (\alpha + k) 1_{\Omega_2} - (\beta + l) 1_{\Omega_-},$$
with $|h| \ls \alpha, |k| \ls \alpha$, and $|l| \ls \beta$. Then, since $\Omega_1 \cap \Omega_2 = \emptyset$, $u_h \in \BV_\diamond$ if and only if
$$ h|\Omega_1| + k |\Omega_2| - l |\Omega_-| = 0 \quad \text{and} \quad h|\Omega_1^s| + k |\Omega_2^s| - l |\Omega_-^s| = 0,$$
where $\Omega_i^s := \Omega_i \cap \Omega_s.$
These two equations lead to
$$ l = h \frac{|\Omega_1^s||\Omega_2| - |\Omega_1||\Omega_2^s|}{|\Omega_2||\Omega_-^s| - |\Omega_-||\Omega_2^s|} \quad \text{and} \quad k = h \frac{|\Omega_1^s||\Omega_-| - |\Omega_1||\Omega_-^s|}{|\Omega_2||\Omega_-^s| - |\Omega_-||\Omega_2^s|}.$$
Under our assumptions on $h, k, l, \Omega_1$ and $\Omega_2$, and since $1_{\Omega_1}+1_{\Omega_2}=1_{\Omega_1 \cup \Omega_2}$, the total variation of the perturbed function $u_h$ can be written as
\begin{align*}\TV(u_h) &= (\alpha+\min(h,k)) \per(\Omega_1 \cup \Omega_2) \\
&\quad +(h-k)^+ \per(\Omega_1) + (k-h)^+ \per(\Omega_2)+ (\beta + l) \per(\Omega_-)\\
&= (\alpha + h) \per(\Omega_1) + (\alpha + k) \per(\Omega_2) + (\beta + l) \per(\Omega_-).\end{align*}
Then, because $u$ is a minimizer of $\TV$, it follows that
$$ h \per(\Omega_1) + k \per(\Omega_2) + l \per(\Omega_-) \gs 0.$$ 
Since the left hand side and $k,l$ are linear in $h$, one can replace $h$ by $-h$ and obtain 
$$ h \per(\Omega_1) + k \per(\Omega_2) + l \per(\Omega_-) = 0$$ 
which shows that $u_h$ is also a minimizer. Now since we have 
$$\beta = \alpha \,\frac{|\Omega_1|+|\Omega_2|}{|\Omega_-|} \text{ and } l = \frac{h|\Omega_1|+k|\Omega_2|}{|\Omega_-|},$$
one can choose $h$ such that $h=-\alpha$ or $k = -\alpha$ without violating $|l|\leq \beta$, and therefore produce a minimizer whose positive part is either 
$\Omega_2$ or $\Omega_1$, respectively. We proceed similarly for the negative part and therefore obtain an indecomposable negative level-set.
\end{proof}
\begin{rem}
In the above proof, through an adequate choice of components for deletion, one can even obtain simply connected level sets. The measure-theoretic notion corresponding to simple connectedness is defined in \cite{AmbCasMasMor01} to be boundedness of the connected components of the complement of the set, these connected components having been defined through indecomposability. For example, assuming that $\Omega_2$ is fully enclosed in $\Omega_-$ (that is if $\partial \Omega_2 \cap \partial \Omega_- = \partial \Omega_2$), 
 then the variation of $u_h$ can also be written
 $$ \TV(u_h) = (\alpha + h) \per(\Omega_1) + (\alpha + k + \beta + l) \per(\Omega_2) + (\beta + l) (\per(\Omega_-) - \per(\Omega_2),$$
 which is linear in $h$ as long as $k \gs -\alpha - \beta - l$. The equality case in this last constraint corresponds to joining $\Omega_2$ to $\Omega_-$, and avoiding creating a ``hole'' in $\Omega_-$ by the procedure mentioned above (which replaces $ \alpha 1_{\Omega_2}$ by zero). Clearly, this procedure can also be performed for the positive level set, and in fact the ``holes'' to be deleted could also be connected components of the zero level set. Therefore, a solution in which both the positive and negative level set are simply connected can be obtained.
\end{rem}
\begin{rem}
The intuition behind these last results is that, like in the proof of Theorem \ref{thm:3val}, 
the constraints of the problem are linear with respect to the values, and the total variation is also linear 
as long as the signs of the differences of values at the interfaces do not change. In particular, the points at 
which the topology of the level sets changes are situations in which these signs change (that is, the values of 
two adjacent level sets are equal).
\end{rem}

\section{Numerical scheme and results}\label{sec:numerics}
We now turn our attention to the numerical computation of solutions to the eigenvalue for Problem \ref{pr:1}. At first, for simplicity, we limit ourselves to the case (considered in \cite{FriIglMerPoeSch17}) in which the velocities are assumed constant on the whole $\Omega_s$. That is, the problem considered is minimization of the total variation in the space 
\begin{equation}\label{eq:bvdo}\BVdo:=\left\{u \in \BV(\R^2) \ \middle \vert \ \int_\Omega u = 0\,, \; u \equiv 1 \text{ in } \Omega_s, u \equiv 0 \text{ in } \R^2 \setminus \Omega\right\}\,,\end{equation}
where the constraint $u \equiv 1 \text{ in } \Omega_s$ corresponds to \eqref{eq:normalize} under this simplification.

This restriction corresponds to the case in which either $\Omega_s$ is connected, so that there is only one solid particle, or all the particles are constrained to move with the same velocity. In Section \ref{sec:multiNum} we point out the required modifications for the multi-particle case and present a variety of computed examples.

To compute a minimizer of $\TV$ in $\BV_{\diamond,1}$, we use a standard primal dual algorithm \cite{ChaPoc11}. The constraint $\int_{\Omega} v = 0$ is enforced through a scalar Lagrange multiplier $q$, whereas the conditions $v = 0$ on $\partial \Omega$ and $v = 1$ on $\Omega_s$ are encoded as indicator functions. Our discretization of choice is finite differences on a rectangular grid $\{1,\ldots, m\}\times\{1,\ldots n\}$, where in this whole section, for simplicity, we assume that $n=m$ and $\Omega \Subset (0,1)^2$. This leads to a saddle point problem of the form
\begin{equation}\label{eq:discmin}
\min_{v \in X} \max_{\substack{p \in X^4 \\ \mkern-10mu q \in \mathbb R}} \rchi_{C^n}(v) + \sum_{i,j} \left[\, (\nabla v)^{ij} \cdot p^{ij} - \rchi_{\{|\cdot|_{\infty} \leq 1\}}(p^{ij})-q v^{ij}\,\right].
\end{equation}
Here, $X=\R^{n^2}$ denotes the space of real-valued discrete functions on the square grid $G^n=\{1,\ldots, n\}\times\{1,\ldots n\}$. Since we use Dirichlet boundary conditions, the grid encloses the physical domain. The corresponding constraint set is then
\begin{equation}\label{eq:cnsingle}C^n := \left\{v \in X \mid v=0 \text{ on } G^n \setminus \Omega^n, v=1 \text{ on }\Omega^n_s\right\},\end{equation}
where $\Omega^n$ and $\Omega^n_s$ denote the parts of the grid corresponding to $\Omega$ and $\Omega_s$ respectively (note that to correctly account for perimeter at the boundary we must have $\Omega^n \subset \{2,\ldots, n-1\}\times\{2,\ldots n-1\}$). The indicator function (in the convex analysis sense) of a set $A$ is denoted by $\rchi_A$, so that $\rchi_A(x)=0$ if $x \in A$, and $+\infty$ otherwise. $\nabla$ stands for a suitable discrete gradient, whose choice we now discuss.

\subsection{Discretization}
We discretize the problem using the ``upwind'' scheme of \cite{ChaLevLuc11} which has the advantage of carrying a high 
degree of isotropy. The discrete velocity is denoted by $v^{ij}$, and we use the signed gradient $(\nabla v)^{ij}$ introduced in \cite{ChaLevLuc11}, containing separate components for forward and backward differences with opposite signs:
\begin{equation}\label{eq:grad}
\begin{aligned}(\nabla v)^{ij} & := \left(v^{i+1,j}-v^{i,j}, \  v^{i-1,j} - v^{i,j},\ v^{i,j+1} - v^{i,j},\ v^{i,j-1} - v^{i,j}\right) \\
&=: \left( (\nabla v)^{ij}_{1,+}, \ (\nabla v)^{ij}_{1,-}, \ (\nabla v)^{ij}_{2,+}, \ (\nabla v)^{ij}_{2,-} \right)
\end{aligned}
\end{equation}
therefore, at each grid point $(i,j) \in G^n$ the signed gradient and its corresponding multiplier variables $\nabla v^{ij}, p^{ij} \in (\R^2)^2$. We note that to compute the gradient when any of the indices is $1$ or $n$ one needs to extends the functions outside the grid, but for the problem at hand any choice will do, since $\Omega^n$ never touches the boundary of the grid.

For us it is important to use a discretization that takes into account derivatives in all coordinate directions equally, since we aim to resolve sharp geometric interfaces that are not induced by a regularization data term. Figure \ref{fig:badcross} contains a comparison with the results obtained when using forward differences. In that case, the geometry of the interfaces is distorted according to their orientations, a phenomenon which is minimized in the upwind scheme. Using centered differences is also not adequate, since the centered difference operator has a nontrivial kernel and our solutions are constant in large parts of the domain.

\begin{figure}[htbp]
    \begin{center}
       \subfigure{
           \label{fig:cross-bc}
           \includegraphics[width=0.29\textwidth]{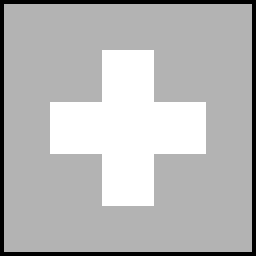}
       }      
       \subfigure{
           \label{fig:cross-fd}
           \includegraphics[width=0.29\textwidth]{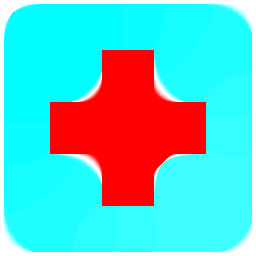}
       }
       \smallskip     
       \subfigure{
           \label{fig:cross-good}
           \includegraphics[width=0.29\textwidth]{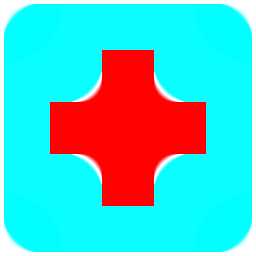}
       }
       \subfigure{
           \label{fig:cross-good-int}
           \includegraphics[width=0.45\textwidth]{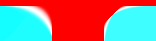}
       }
       \subfigure{
           \label{fig:cross-fd-int}
           \includegraphics[width=0.45\textwidth]{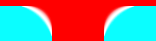}
       }
   \end{center}
   \caption{Results (contrast-enhanced images) with different discretizations. Top row: Boundary conditions, result with only forward differences and with the chosen upwind scheme. Bottom row: Detail of the interfaces in both cases. For the upwind scheme, the resulting interfaces depend less strongly on their orientation, and there are two flip symmetries.}
  \label{fig:badcross}
\end{figure}

\subsection{Convergence of the discretization}
It is well-known that the standard finite difference discretizations of the total variation converge, in the sense of 
$\Gamma$-convergence with respect to the $L^1$ topology \cite{ChaLevLuc11},
where the discrete functionals are appropriately defined for piecewise constant functions. We now aim to demonstrate that the chosen discretization and penalization scheme still converges and correctly accounts for the boundary conditions in the limit. We introduce, for each $(i,j) \in \set{1,\ldots,n-1}^2$,
$$R_{ij}^n := \frac 1n \left( i-\frac 12, i+ \frac 12 \right) \times \left( j-\frac 12, j+ \frac 12 \right).$$
First, we need to decide which constraint to use in the discrete setting. We denote by 
$$E - B\left(\frac 1n \right) := \left\{x \in E \; \middle 
\vert\; d(x,\partial E) > \frac 1n \right\},$$ 
Our choice is to take
$$\Omega_s^n := \bigcup_{R_{ij}^n \subset \Omega_s - B(\frac 1n)} R_{ij}^n$$
whereas
$$\Omega^n := [0,1]^2 \setminus \left( \bigcup_{R_{ij}^n \subset ([0,1]^2 \setminus \Omega) - B(\frac 1n)} R_{ij}^n \right),$$
such that the discrete constraints are less restrictive than the continuous ones (see Figure \ref{fig:discretedomain}) and
\begin{equation}\label{eq:discompact}\overline{\Omega_s^n} \Subset \Omega_s, \quad  \overline{[0,1]^2 \setminus \Omega^n} \Subset [0,1]^2 \setminus \Omega.\end{equation}
We define $\TV^n$ as in \cite{ChaLevLuc11}, when the function is piecewise constant on the $R_{ij}^n$ and $+\infty$ otherwise.
$$\TV^n := \frac{1}{n^2} \sum_{i,j} |\nabla v^{ij} \vee 0|$$
with $\nabla v^{ij} \vee 0$ denotes the positive components of $\nabla v^{ij}$, which was defined in \eqref{eq:grad}, therefore picking only the `upwind' variations. The norm is computed using the inner product in $\mathbb R^{2 \times 2}$.

\begin{figure}[htbp]
    \begin{center}
		\includegraphics[width=.6\textwidth]{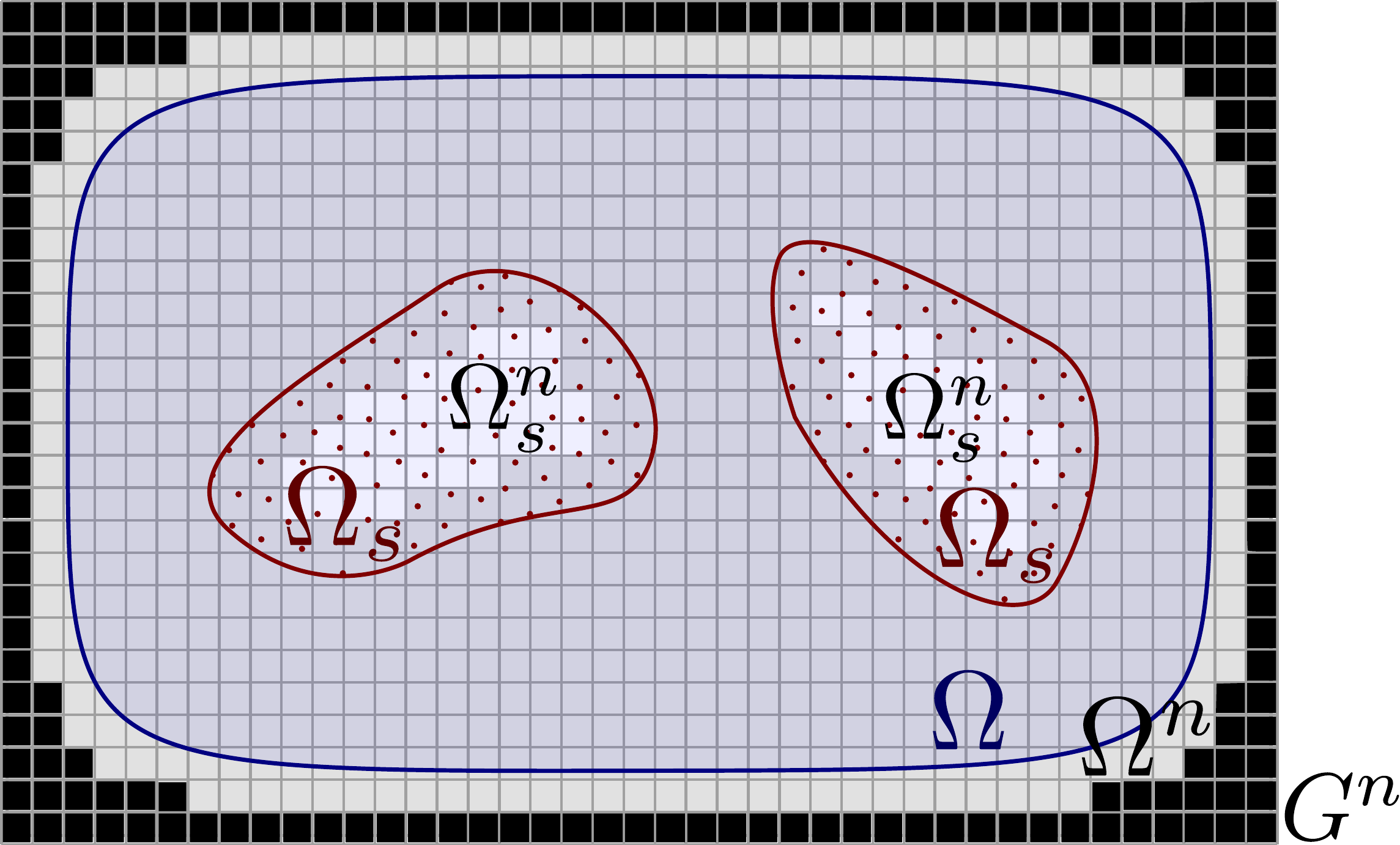}
    \end{center}
   \caption{Discretization of the domain and constraints: The discrete grid encloses $\Omega$, and discrete regions are only constrained if they are compactly contained in the corresponding continuous ones. Here, grey squares have free values while the black and white ones are fixed.}
  \label{fig:discretedomain}
\end{figure}

We first prove the following lemma, which states that the continuous total variation may be computed with multipliers with positive components, mimicking the discrete definition.
\begin{lem}\label{lem:TVpos}
Let $v \in \BV(\R^d)$ and $\Omega \subset \R^d$ open. Then, $\TV(v,\Omega) = \TV^+(v,\Omega)$, where
\begin{equation}\TV^+(v,\Omega):=\sup \left\{\int_\Omega v \cdot \div p - v \cdot \div q \ \middle \vert \  p,q \in \mathcal C_0^1(\Omega, \R^d),\, |p|^ 2+|q|^2\ls 1,\, p,q \gs 0 \right \}. \label{eq:TVpos} \end{equation}
\end{lem}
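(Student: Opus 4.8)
The plan is to show both inequalities $\TV^+(v,\Omega) \ls \TV(v,\Omega)$ and $\TV(v,\Omega) \ls \TV^+(v,\Omega)$. The first is immediate: if $p,q \in \mathcal C_0^1(\Omega,\R^d)$ with $|p|^2 + |q|^2 \ls 1$ and $p,q \gs 0$, then $\psi := p - q$ is an admissible competitor in the classical definition of $\TV(v,\Omega)$ since $|\psi| \ls |p| + |q| \ls \sqrt{2}\sqrt{|p|^2+|q|^2}$\,... — wait, more carefully, we need $|\psi| \ls 1$. The clean fix is to observe that since $p,q \gs 0$ componentwise, the components $p_k$ and $q_k$ cannot both be large; in fact writing $\psi_k = p_k - q_k$ one has $\psi_k^2 \ls \max(p_k^2, q_k^2) \ls p_k^2 + q_k^2$, hence $|\psi|^2 = \sum_k \psi_k^2 \ls \sum_k (p_k^2 + q_k^2) = |p|^2 + |q|^2 \ls 1$. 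Therefore $\int_\Omega v\,\div\psi = \int_\Omega v\,\div p - v\,\div q \ls \TV(v,\Omega)$, and taking the supremum gives $\TV^+(v,\Omega) \ls \TV(v,\Omega)$.

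For the reverse inequality, I would start from an arbitrary admissible $\psi \in \mathcal C_0^1(\Omega,\R^d)$ with $|\psi| \ls 1$ and produce $p,q \gs 0$ with $|p|^2 + |q|^2 \ls 1$ and $p - q = \psi$. The natural choice is the positive and negative parts taken componentwise: $p_k := \psi_k^+ = \max(\psi_k,0)$ and $q_k := \psi_k^- = \max(-\psi_k,0)$. Then $p,q \gs 0$, $p - q = \psi$, and for each $k$ at most one of $p_k, q_k$ is nonzero, so $p_k^2 + q_k^2 = \psi_k^2$ pointwise; summing over $k$ gives $|p|^2 + |q|^2 = |\psi|^2 \ls 1$ exactly. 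The only defect is regularity: $\psi^+$ and $\psi^-$ are merely Lipschitz, not $\mathcal C^1$, so they are not literally admissible in \eqref{eq:TVpos}. I would repair this by a standard mollification argument: approximate $\psi$ (or rather $p$ and $q$) by smooth functions, being careful to preserve the constraints $p,q \gs 0$ (mollification preserves nonnegativity) and $|p|^2 + |q|^2 \ls 1$ (which may be slightly violated, so rescale by a factor tending to $1$), and compact support in $\Omega$ (shrink the support slightly before mollifying). Since $\int_\Omega v\,\div p_\eps - v\,\div q_\eps \to \int_\Omega v\,\div\psi$ as $\eps \to 0$ — this follows because $\div p_\eps \to \div p$ strongly enough against the measure $v\,\mathrm dx$, using $v \in L^1$ and the uniform bound on the divergences, or more robustly because $\int v\,\div p_\eps = -\int p_\eps \cdot d(\nabla v)$ as measures and $p_\eps \to p$ uniformly — we conclude $\int_\Omega v\,\div\psi \ls \TV^+(v,\Omega)$, and taking the supremum over $\psi$ yields $\TV(v,\Omega) \ls \TV^+(v,\Omega)$.

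The main obstacle is the smoothing step: one wants to mollify $p = \psi^+$ and $q = \psi^-$ while simultaneously keeping nonnegativity, the quadratic bound $|p|^2+|q|^2 \ls 1$, and $\mathcal C^1_0(\Omega)$ regularity, and then still have convergence of the pairing $\int v(\div p - \div q)$ to the original value. Nonnegativity is automatic under convolution with a nonnegative kernel; compact support is handled by first replacing $\psi$ by $\psi(\cdot/(1-\delta))$ or multiplying by a cutoff that is $1$ on the support translated inward; the quadratic constraint is the subtle one, since convolution is not compatible with pointwise nonlinear bounds, but Jensen's inequality gives $|p_\eps|^2 + |q_\eps|^2 \ls (\,(|p|^2+|q|^2)*\rho_\eps\,) \ls 1$ pointwise if one first notes $|p_\eps(x)|^2 = |\int p(x-y)\rho_\eps(y)\,dy|^2 \ls \int |p(x-y)|^2 \rho_\eps(y)\,dy$ componentwise and sums — so in fact no rescaling is even needed. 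With that observation the argument is routine; everything else is bookkeeping with the coarea-type identity $\int_\Omega v\,\div p = -\int_\Omega p\cdot d\nabla v$ valid for $v \in \BV$ and $p \in \mathcal C^1_0(\Omega,\R^d)$.
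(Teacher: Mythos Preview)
Your proof is correct, and in fact the hard direction is considerably simpler than the paper's argument. The easy inequality $\TV^+ \ls \TV$ is handled identically in both (your observation $\psi_k^2 \ls p_k^2 + q_k^2$ is just the componentwise form of the paper's $|p-q|^2 = |p|^2 + |q|^2 - 2\,p\cdot q \ls 1$).

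For the reverse inequality the two arguments diverge. You take an arbitrary admissible $\psi$, split it componentwise as $\psi = \psi^+ - \psi^-$, and then mollify the two Lipschitz pieces; the key point, which you identify correctly, is that Jensen's inequality applied coordinatewise gives $|p_\eps|^2 + |q_\eps|^2 \ls (|p|^2 + |q|^2)\ast \rho_\eps \ls 1$, so the quadratic constraint survives mollification without any rescaling. Convergence of the pairing is then immediate since $p_\eps - q_\eps = \psi \ast \rho_\eps \to \psi$ in $C^1$. (Incidentally, the cutoff/dilation step you mention is unnecessary: $\psi^\pm$ already have compact support in $\Omega$, so for small $\eps$ so do their mollifications.)

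The paper instead starts from a near-maximizer $p_\eps$ of the classical $\TV$, shows via the estimate $|\mu - \nu|^2 \ls 2 - 2\mu\cdot\nu$ that $p_\eps$ is $L^2(|\nabla v|)$-close to the Radon--Nikodym density $\d(\nabla v)/\d|\nabla v|$, and then composes each component of $p_\eps$ with a smooth odd function $\psi_\alpha$ that vanishes on a neighbourhood of zero. This ``fattens'' the zero sets so that the positive and negative parts of $\tilde p_\eps$ are themselves $C^1$, at the cost of several auxiliary $L^2(|\nabla v|)$ estimates controlling the error introduced by $\psi_\alpha$. Your route avoids all of this machinery; the Jensen trick is the more economical idea here.
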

\begin{proof}
We recall that
\begin{equation}\TV(v,\Omega) = \sup\left \{ \int_\Omega v \cdot \div p \ \middle \vert \ p \in \mathcal C_0^1(\Omega,\R^d),\, |p| \ls 1 \right \}.\label{eq:TVnormal} \end{equation}
Let $p,q$ be admissible in the right hand side of \eqref{eq:TVpos}. Then we notice that $p-q$ is also admissible in \eqref{eq:TVnormal}, because $p,q$ being componentwise positive implies
$$|p-q|^2=|p|^2+|q|^2-2\, p \cdot q\ls 1-2\, p \cdot q \ls 1,$$
and since $\div (p-q)=\div p - \div q$ we have
$$\TV^+(v,\Omega) \ls \TV(v,\Omega).$$

To prove the reverse inequality, let $\eps > 0$ be arbitrary and $p_\eps \in \mathcal C_0^1(\Omega,\R^d)$ with $|p_\eps| \ls 1$ such that 
$$\TV(v,\Omega) - \int_{\Omega} \sum_{j=1}^d v_j \div (p_\eps)_j < \eps,$$
which we can write (renaming $p_\eps$ to its additive inverse, for convenience) as
\begin{equation}\label{eq:prodineq}
\int_{\Omega} \left(1 - p_\eps \cdot \frac{\d (\nabla v)}{\d|\nabla v|} \right) \d|\nabla v| < \eps.
\end{equation}
Noting that $|p_\eps| \ls 1$ and $\frac{\d \nabla v}{\d|\nabla v|} \ls 1$, the last inequality implies 
(since for $|\mu|,|\nu| \ls 1$, $|\mu - \nu|^2 \ls 2 - 2 \mu \cdot \nu$) as
\begin{equation}\label{eq:distineq}\int_{\Omega} \frac 12 \left \vert  p_\eps- \frac{\d \nabla v}{\d|\nabla v|}  \right \vert^2 \dd|\nabla v| < \eps.\end{equation}
Notice that we may write this integral, since the function $\frac{\d \nabla v}{\d|\nabla v|}$ is a Radon-Nikodym derivative, in principle only in $L^1(\Omega, |\nabla v|)$, but its modulus is $1$ for $|\nabla v|$-almost every point \cite[Corollary 1.29]{AmbFusPal00}, so it is also in $L^2(\Omega, |\nabla v|)$. Now, by \eqref{eq:prodineq} and the Cauchy-Schwarz inequality we have
\begin{equation}\label{eq:closetoone}
\begin{aligned}\int_{\Omega}1-|p_\eps|^2 \dd|\nabla v|&=\int_{\Omega}\big(1-|p_\eps|\big)\big(1+|p_\eps|\big)\dd|\nabla v| \leq 2\int_{\Omega}1-|p_\eps| \dd|\nabla v|\\
&\leq 2 \int_{\Omega} \left( 1 - p_\eps \cdot \frac{\d \nabla v}{\d|\nabla v|} \right) \d|\nabla v| <2\eps,
\end{aligned}
\end{equation}

Now we replace the components $(p_\eps)_{ij}$ by $(\tilde p_\eps)_{ij}$ which are smooth, coincide with 
$(p_\eps)_{ij}$ out of $\{|(p_\eps)_{ij}| < \sqrt{\eps} \}$, that satisfy
$$|(\tilde p_\eps)_{ij}| \ls |(p_\eps)_{ij}|$$
and such that $\{(\tilde p_\eps)_{ij} = 0\}$ is the closure of an open set:
One can for example choose 
$$0 < \alpha < \sqrt{\eps}$$
and define a smooth nondecreasing function $\psi_\alpha : \R \to \R$ such that $\psi_\alpha(t) = t$ for $|t| \gs \alpha$, $|\psi_\alpha(t)| \leq |t|$ and $\psi_\alpha (-\alpha/2, \alpha/2)=\{0\}$
to define
$$(\tilde p_\eps)_{ij} := \psi_\alpha \circ (p_\eps)_{ij}.$$
Thus we have $|\tilde p_\eps| \ls 1$ and $|(\tilde p_\eps)_{ij} - (p_\eps)_{ij} | \ls \sqrt{\eps}$, and taking into account \eqref{eq:prodineq} we obtain
\begin{equation}\label{eq:tildedistineq}\begin{aligned}\Biggl( \int_{\Omega} \left \vert  \tilde p_\eps- \frac{\d \nabla v}{\d|\nabla v|}  \right \vert^2 \dd|\nabla v| \Biggr)^{\frac 12} &\ls \left( \int_{\Omega} \left \vert p_\eps - \frac{\d \nabla v}{\d|\nabla v|}  \right \vert^2 \dd|\nabla v| \right)^{\frac 12} + \left( \int_{\Omega} |\tilde p_\eps - p_\eps |^2 \dd|\nabla v| \right)^{\frac 12} \\
&\ls C \sqrt{\eps} \left( 1 + |\nabla v|(\Omega)\,\right).
\end{aligned}\end{equation}

Furthermore, using \eqref{eq:closetoone} and the definition of $\tilde p_\eps$ we obtain the estimate
\begin{equation*}\label{eq:tildeclosetoone}
\int_{\Omega}1-|\tilde p_\eps|^2 \dd|\nabla v| = \int_{\Omega}1-|p_\eps|^2 \dd|\nabla v| + \int_{\Omega} | p_\eps |^2 - |\tilde p_\eps |^2 \dd|\nabla v| \leq 2\eps + 4\eps|\nabla v| < C\eps(1+|\nabla v|),
\end{equation*}
which ensures, writing $1-\mu:\nu = \frac 12 (1-|\mu|^2 + 1- |\nu|^2 + |\mu - \nu|^2)$ and by \eqref{eq:tildedistineq} that
\begin{equation}\label{eq:tildealmostaligned}\left \vert \int_{\Omega} \left(\tilde p_\eps : \frac{\d \nabla v}{\d|\nabla v|} -1 \right) \dd|\nabla v| \right \vert \ls C \eps\left( 1 + \big(\ 1 + |\nabla v|(\Omega)\, \big)^2\right).\end{equation}
Now, we notice that having fattened the level-set $\{(\tilde p_\eps)_{ij}=0\}$, we can write 
$$ (\tilde p_\eps)_{ij} = \left[(\tilde p_\eps)_{ij}\right]^+ - \left[(\tilde p_\eps)_{ij}\right]^-$$ where both quantities are smooth.
Writing similarly
$$\tilde p_\eps = \tilde p_\eps^+ - \tilde p_\eps^-$$
with $\tilde p_\eps^\pm$ are smooth and have only positive components, we note that $(\tilde p_\eps^+,\tilde p_\eps^-)$ are admissible in the right hand side of \eqref{eq:TVpos}, 
so that \eqref{eq:tildealmostaligned} implies
$$\TV^+(v,\Omega) \gs \TV(v,\Omega) -C(v)\eps.$$
Letting $\eps \to 0$, we conclude.
\end{proof}
We can now prove Gamma-convergence of the discrete problems, implying convergence of the corresponding minimizers.
\begin{thm}\label{thm:conv}
$$ \TV^n + \rchi_{C^n} \xrightarrow{\Gamma-L^1} \TV + \rchi_C$$
where $$C^n := \{v=1 \text{ on } \Omega_s^n, 0 \text{ on } [0,1]^2 \setminus \Omega^n)$$ and $$C := \{v=1 \text{ on } \Omega_s, 0 \text{ on } [0,1]^2 \setminus \Omega)\}.$$
\end{thm}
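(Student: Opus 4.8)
The plan is to verify the two defining conditions of $\Gamma$-convergence in $L^1([0,1]^2)$: the $\liminf$ inequality (lower bound) and the existence of a recovery sequence (upper bound). For the \textbf{lower bound}, suppose $v^n \to v$ in $L^1$ with $\TV^n(v^n) + \rchi_{C^n}(v^n)$ bounded. First, the known result of \cite{ChaLevLuc11} gives that the upwind discrete total variation $\Gamma$-converges from below to $\TV$, so $\TV(v) \le \liminf_n \TV^n(v^n)$; the only new point is that the constraints pass to the limit. Since each $v^n$ equals $1$ on $\Omega_s^n$ and $0$ on $[0,1]^2 \setminus \Omega^n$, and these discrete sets exhaust $\Omega_s$ and $[0,1]^2 \setminus \Omega$ respectively from inside (by construction of $\Omega_s^n$, $\Omega^n$ via the $B(1/n)$ erosions, cf. \eqref{eq:discompact}), the $L^1$ limit $v$ must equal $1$ a.e.\ on $\Omega_s$ and $0$ a.e.\ on $[0,1]^2\setminus\Omega$: indeed for any compact $K \Subset \Omega_s$ we have $K \subset \Omega_s^n$ for $n$ large, so $v^n = 1$ on $K$, and $L^1$ convergence forces $v = 1$ a.e.\ on $K$; exhausting $\Omega_s$ by such $K$ gives the claim, and similarly for the exterior. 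Hence $v \in C$ and $\rchi_C(v) = 0 \le \liminf_n \rchi_{C^n}(v^n)$, giving the lower bound.

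For the \textbf{upper bound}, fix $v \in C$ with $\TV(v) < \infty$ (otherwise nothing to prove). Here is where Lemma \ref{lem:TVpos} enters: rather than mollifying $v$ directly (which would conflict with the hard constraints near $\partial\Omega_s$ and $\partial\Omega$), I would use the dual characterization. Given $\eps > 0$, pick $p, q \in \mathcal{C}_0^1(\Omega', \R^2)$ with $p,q \ge 0$ componentwise, $|p|^2 + |q|^2 \le 1$, and $\int (v\,\div p - v\,\div q) > \TV(v) - \eps$, where $\Omega'$ is a slightly enlarged open set on which $v$ is still defined. Define $v^n$ to be the $L^1$ projection of $v$ onto piecewise-constant functions on the cells $R_{ij}^n$ that additionally respects the discrete constraints: concretely, average $v$ over each cell $R_{ij}^n$, then overwrite the values to be $1$ on $\Omega_s^n$ and $0$ outside $\Omega^n$. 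Because $\overline{\Omega_s^n} \Subset \Omega_s$ and $\overline{[0,1]^2 \setminus \Omega^n} \Subset [0,1]^2\setminus \Omega$ with $v$ already equal to the target values on a neighborhood, the overwriting only affects cells where $v$ is already (close to) constant, so $v^n \to v$ in $L^1$ and $v^n \in C^n$. It remains to show $\limsup_n \TV^n(v^n) \le \TV(v)$. One pairs the discrete gradient $\nabla v^n \vee 0$ against a discretization $p^n, q^n$ of the continuous test fields $p, q$ (sampling at grid points, or averaging), using summation by parts: $\TV^n(v^n) \ge \frac{1}{n^2}\sum_{ij}(\nabla v^n)^{ij}\!\cdot(p^n,q^n)^{ij} \approx \int v\,(\div p - \div q)$ up to $o(1)$ errors controlled by the $\mathcal{C}^1$-regularity of $p,q$; but this only gives a $\liminf$ lower bound on $\TV^n$, which is the wrong direction. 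For the \emph{upper} bound on $\TV^n(v^n)$ I would instead invoke directly the $\limsup$ half of the $\Gamma$-convergence in \cite{ChaLevLuc11} applied to the cell-averaged function, and then argue that the constraint-correction step increases $\TV^n$ by at most $o(1)$: since the corrected cells sit strictly inside regions where $v$ is constant, for $n$ large the correction does not alter any discrete difference that straddles the free region, contributing nothing extra to $\TV^n$.

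The \textbf{main obstacle} is precisely this last point — reconciling the hard Dirichlet-type constraints with the recovery sequence. One must ensure that forcing $v^n = 1$ on $\Omega_s^n$ and $v^n = 0$ outside $\Omega^n$ does not create spurious jumps: this is exactly why the discrete constraint sets were deliberately chosen \emph{smaller} than the continuous ones (the $B(1/n)$ erosions and \eqref{eq:discompact}), leaving a one-cell-thick ``buffer'' in which $v^n$ can freely interpolate the cell-averages of $v$. Making this buffer argument quantitative — showing the total variation contribution from the buffer cells converges to the correct boundary/interface contribution of $\TV(v)$ and not more — is the technical heart of the proof, and it is where Lemma \ref{lem:TVpos} is genuinely used, since it lets one work with nonnegative (hence ``upwind-compatible'') dual fields that match the one-sided nature of the discrete gradient \eqref{eq:grad}. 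Once the lower and upper bounds are established, the standard consequence of $\Gamma$-convergence together with the equicoercivity of $\TV^n + \rchi_{C^n}$ (uniform $L^1$-$BV$ bounds from the boundedness of $\TV^n$ on the constrained set, plus the fact that the constraint sets are uniformly bounded in $L^\infty$) yields convergence of discrete minimizers to a minimizer of $\TV + \rchi_C$, as claimed.
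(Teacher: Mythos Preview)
Your $\liminf$ argument is essentially correct and matches the paper's in spirit; the constraint-passing via compact exhaustion is equivalent to the paper's contrapositive formulation. However, you have misplaced the role of Lemma~\ref{lem:TVpos}: in the paper it is used in the $\liminf$ step, not the $\limsup$. The paper writes $\TV^n(v_n)$ in dual form with nonnegative discrete test fields, identifies these as samples of smooth $\overline p \gs 0$, passes to the limit in the divergence, and then needs Lemma~\ref{lem:TVpos} to conclude that the resulting supremum over nonnegative continuous test pairs equals the ordinary $\TV$. If you simply cite \cite{ChaLevLuc11} for the $\liminf$ of $\TV^n$, you bypass this, which is acceptable but then the lemma is unused in your argument.

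For the $\limsup$ you have made things harder than necessary, and your stated reason for avoiding mollification is mistaken. Mollification does \emph{not} conflict with the discrete constraints: this is precisely why $\Omega_s^n$ and $[0,1]^2\setminus\Omega^n$ were defined via the erosion $E - B(1/n)$ so that \eqref{eq:discompact} holds. The paper's recovery sequence is simply $v_\delta = \psi_\delta * v$ followed by cell-averaging; since $v$ is already constant on a full neighbourhood of $\overline{\Omega_s^n}$ and of $\overline{[0,1]^2\setminus\Omega^n}$, taking $\delta \ls 1/n$ guarantees $v_\delta$ (hence its cell averages) satisfies the discrete constraints exactly, with no overwriting needed. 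The smoothness of $v_\delta$ then makes $\TV^n(v_{\delta,n}) \to \TV(v_\delta)$ elementary, and a diagonal argument in $(\delta,n)$ finishes. Your ``overwriting'' step is in fact a no-op for the same reason (cells in $\Omega_s^n$ lie entirely inside $\Omega_s$ where $v\equiv 1$, so their averages are already $1$), so the buffer concern you flag as the ``main obstacle'' does not actually arise; but without mollifying first you are left having to control $\TV^n$ of cell-averages of a merely $\BV$ function, which you then defer back to \cite{ChaLevLuc11} --- essentially reintroducing mollification through the back door.
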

\begin{proof}
First, we study the $\Gamma$-liminf and assume that $v_n \to v$ in $L^1$. Notice that we can write $\TV^n(v_n)$ as a dual formulation
$$\TV^n(v_n) = \sup\left\{ v_n \cdot \div^n(p) \, \mid \, p:G \to (\R^2)^2 \right\}$$
where $\div^n p \in \R^2$ is the signed divergence corresponding to \eqref{eq:grad}, and defined by 
\begin{align*}(\div^n(p))^{ij} := &(p_{1,+})^{i,j} - (p_{1,+})^{i-1,j} + (p_{1,-})^{i,j} - (p_{1,-})^{i+1,j} \\&(p_{2,+})^{i,j} - (p_{2,+})^{i,j-1} + (p_{2,-})^{i,j} - (p_{2,-})^{i,j+1}.\end{align*}
This is obtained easily by a (discrete) integration by parts in the expression
$$|\nabla v \vee 0| = \sup_{\substack{|p| \ls 1 \\ p_i \gs 0}} \nabla v \cdot p.$$

Now, we note that every $p : G \to (\R^2)^2$ can be viewed as the discretization of some smooth function $\overline p:[0,1]^2 \to (\R^2)^2$, for example stating 
$$ p^{ij} = \fint_{R_{ij}} \overline p.$$
As a result, one can write
$$\TV^n(v_n) = \sup\left\{ v_n \cdot \div^n(\overline p) \, \middle\vert \, \overline p\in \mathcal C_0^1\left([0,1]^2, (\R^2)^2\right), \ | \overline p| \ls 1, \overline p \gs 0 \ \right\}.$$
It is well known that for a smooth function $\overline p$, the quantity $\div^n \overline p$ converges to
\begin{align*}\div \overline p & = \div(\overline p_{1,1},\overline p_{2,1})+ \div(-\overline p_{1,2},-\overline p_{2,2}).
\end{align*}
Therefore, using Lemma \ref{lem:TVpos} we get
$$\TV^+(v) = \TV(v) \ls \liminf \TV^n(v_n).$$
For $\rchi_{C^n}$, let us first assume $\rchi_{C}(v) = + \infty$, that is either $v \not \equiv 0$ on $[0,1]^2 \setminus \Omega$ or $v \not \equiv 1$ 
on $\Omega_s$. If the latter holds, then for $\eps$ small enough, 
$\Omega_s \cap \left(\{v > 1+2\eps\} \cup \{v < 1- 2 \eps \} \right)$ has positive measure and thanks to the 
$L^1$ convergence of $v_n$,
$$\Omega^n_s \cap \left( \{v_n >1+ \eps \} \cup \{v_n < 1- \eps \} \right)$$
must have a positive measure for $n$ big enough. That implies $\rchi_{C^n}(v_n) = + \infty$ and the $\Gamma$-liminf inequality is trivially true. If $\rchi_{C}(v) < \infty$, then $\rchi_{C}(v) =0$ and the inequality is also true since $C^n \subset C$.

Let now $v \in \BV ((0,1)^2)$. For the $\Gamma$-limsup inequality we want to construct a sequence 
$v_n \to v$ such that $$\TV(v)  + \rchi_C(v) \gs \limsup \TV^n(v_n) + \rchi_{C^n}(v_n).$$ If $v \notin C$, any $v_n \to v$ gives the inequality. If $v \in C$, then we first introduce
$$v_\delta = \psi_\delta  \ast v$$
where $\psi_\delta$ is a convolution kernel with width $\delta$.

Then, $\TV(v_\delta) \to \TV(v)$ (\cite[Theorem 1.3]{AnzGia80}, noticing that $v$ is constant around $\partial [0,1]^2$) and, thanks to \eqref{eq:discompact}, if $\delta \ls \frac 1n,$ we have $\rchi_{C^n}(v_\delta) = 0.$

We define $v_{\delta,n}$ by
$$(v_{\delta,n})^{ij} = \fint_{R_{ij}^n} v_\delta, $$
that satisfies $\rchi_{C^n}(v_{\delta,n}) = 0$, and compute
$$ \frac{ \left | (v_{\delta,n})^{i+1,j}-(v_{\delta,n})^{i,j} \right |}{n} = \frac 1n \left |\fint_{R_{ij}^n} v_\delta(x+\frac 1n,y,z) - v_\delta(x,y) \right | \gs \inf_{R_{ij}^n \cup \big( R_{ij}^n+(\frac 1n,0) \big) } \vert \partial_x v_\delta \vert . $$
Then since $v_\delta \in C^1$, it is clear that the right hand side converges to $|\partial_x v_\delta|$.
Note that in the 'upwind' gradient of a smooth function, only one term by direction can be active, then it is also true for 
$v_{\delta,n}$ if $n$ is large enough and therefore $\TV^n(v_{\delta,n}) \to \TV(v_\delta).$
By a diagonal argument on $\delta$ and $n$, we conclude.
\end{proof}

\subsection{Single particle results}
\label{sec:BV}
In this section, we again restrict ourselves to the case in which there is either only one particle, or the particles are constrained to move with the same velocity. 
 
In \cite{FriIglMerPoeSch17}, it is shown analytically that the minimizers of $\TV$ over the set $\BV_{\diamond, 1}$ defined in \eqref{eq:bvdo} have level-sets that minimize some geometrical quantities. In particular, Theorem 4.10 shows that there exists a minimizer of the form
 $$ u_0 := 1_{\Omega_1} - \lambda 1_{\Omega_-}$$
 where $\Omega_-$ is the maximal Cheeger set of $\Omega \setminus \Omega_s$, and $\Omega_1$ is a minimizer of $$E \mapsto P(E)+\frac{P(\Omega_-)}{|\Omega_-|} |E|$$ over $E \supset \Omega_s$.
 
Unfortunately, determining Cheeger sets analytically is only possible in a very narrow range of sets, which makes useful the numerical computation of minimizers. We present two examples of the output of the numerical method for \eqref{eq:discmin} with the constraint \eqref{eq:cnsingle}. First, we consider the ``Pacman'' shaped $\Omega_s$ within again a square $\Omega$; see Figure \ref{fig:pacman} (left). This example induces both asymmetry (left-right) and non-convexity of $\Omega_s$ which is showed in \cite{FriIglMerPoeSch17} to influence the geometry of the minimizer. The solution is shown in the central panel of Figure \ref{fig:pacman} and the right-hand panel shows a histogram of the solution. 
\begin{figure}[htb]
    \begin{center}
       \subfigure{
           \label{fig:pacman-bc1}
           \includegraphics[width=0.225\textwidth]{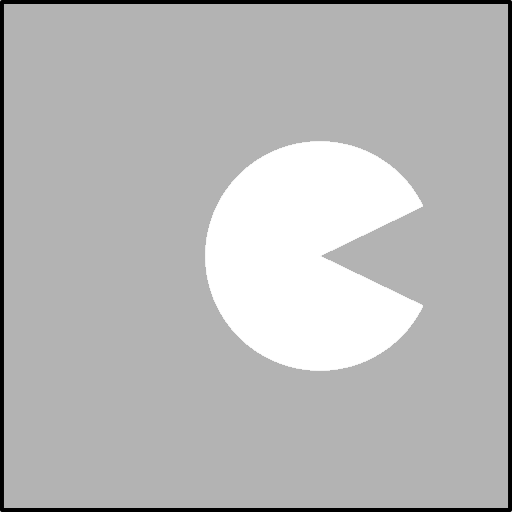}
       }
       \subfigure{
          \label{fig:pacman-result1}
          \includegraphics[width=0.225\textwidth]{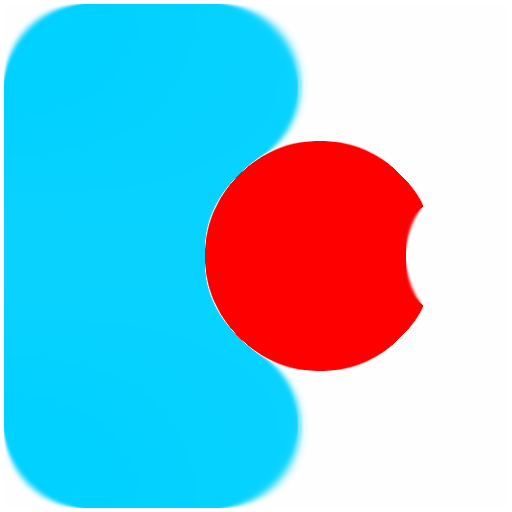}
       }
       \subfigure{
           \label{fig:pacman-bc2}
           \includegraphics[width=0.225\textwidth]{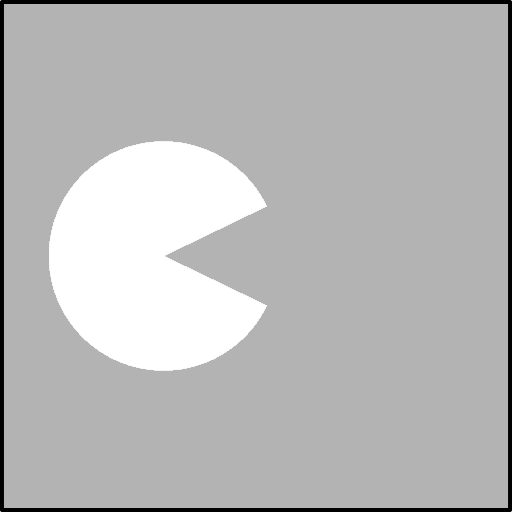}
       }
       \subfigure{
          \label{fig:pacman-result2}
          \includegraphics[width=0.225\textwidth]{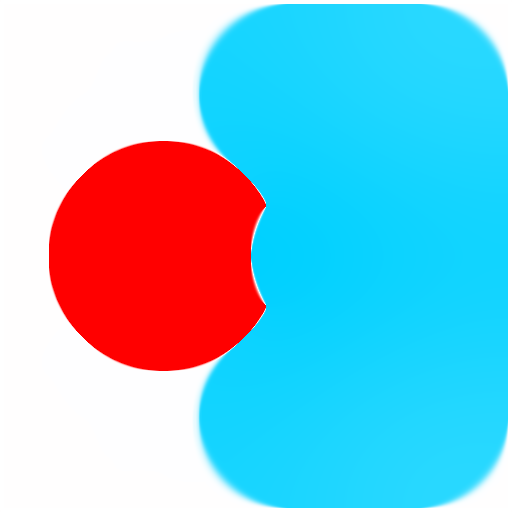}
       }       
   \end{center}
   \caption{Numerical results with a single particle, illustrating the results of \cite{FriIglMerPoeSch17}. On the left of the two subfigures, the free part $\Omega^n \setminus \Omega^n_s$ of the computational domain $G^n$ is in gray, and the particles $\Omega^n_s$ are white. The minimizers are on the right, where the blue colour represents the negative values and the red colour, the positive ones. More precisely, the left result has nonzero values $\{-2.38, 7.41\}$ whereas the right result has $\{-2.35, 7.41\}.$ The corresponding computed critical yield numbers are is $Y_c=0.0576$ and $Y_c=0.0596$, with $\Omega$ having side length $1$.}
  \label{fig:pacman}
\end{figure}

The second example concerns the geometry depicted in Figure \ref{fig:dumbbell} (top panel), in which $\Omega_s$ denotes the two L-shaped regions in the white dumbbell-shaped domain $\Omega$. By giving a close look, it is clear that there is a Cheeger set of $\Omega \setminus \Omega_s$ in each half of the domain, which implies the non uniqueness of the minimizer. The question is which solution the computations will converge to. Figure \ref{fig:dumbbell} (lower, left and right) show that different minimizers are selected numerically, in this case by using different numerical resolution.  

\begin{figure}[htb]
    \begin{center}
       \subfigure{
           \label{fig:dumbbell-bc}
           \includegraphics[width=0.4\textwidth]{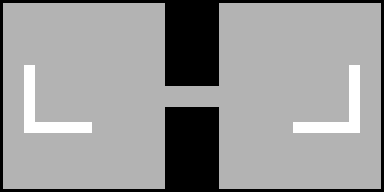}
       }\\
       \subfigure{
          \label{fig:dumbbell-result}
          \includegraphics[width=0.4\textwidth]{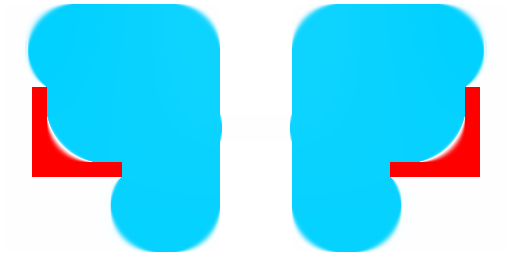}
       }
       \subfigure{
           \label{fig:dumbbell-histogram}
           \includegraphics[width=0.4\textwidth]{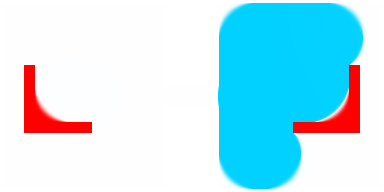}
       }
   \end{center}
   \caption{Boundary conditions and results computed at two different resolutions, in a situation when uniqueness of minimizers of $\TV$ in $\BV_{\diamond, 1}$ is not expected \cite{KawLac06, FriIglMerPoeSch17}. The nonzero values are $\{-4.45,53.0\}$ for the left result and $\{-8.67, 53.8\}$ for the right one. In both cases, $Y_c = 0.087$, where the longest side of $\Omega$ is $1$.  }
  \label{fig:dumbbell}
\end{figure} 

\subsection{Several particles}\label{sec:multiNum}
We now extend the numerical scheme of to optimize also over the velocities $\gamma_i$ on each component $\Omega_s^i$. The corresponding problem is again the minimization \eqref{eq:discmin}, but with the new constraint set
\[C^n := \left\{v \in X \ \middle \vert \ v=0 \text{ on }G^n \setminus \Omega^n,\; v \text{ constant on }(\Omega^n_s)^i, \;\frac{1}{|\Omega^n_s|} \sum_{\Omega^n_s} v = 1\right\}.\]
Here, $(\Omega^n_s)^i$ denotes the $i$-th component of the discrete domain, corresponding to $\Omega_s^i$. The set $C^n$ is the discrete counterpart to the set $\BV_\diamond$ used in sections \ref{sec:relax} and \ref{sec:geomsols}.

We give several examples that illustrate the behavior of $\TV$-minimizers in $\BV_\diamond$ with a disconnected $\Omega_s$. Figure \ref{fig:bridges} shows the influence of the positions of particles with respect to each other and to the boundary, which might lump up in different configurations. Figure \ref{fig:MultpartGeneric} shows two generic situations: \ref{fig:MultpartGeneric1}, the flowing part is concentrated around one connected component of $\Omega_s$ whereas on \ref{fig:MultpartGeneric2}, it is concentrated around the whole $\Omega_s.$

\begin{figure}[htb]
     \begin{center}
     \mbox{} \hfill
     \raisebox{-0.5\height}{\includegraphics[width=0.32\textwidth]{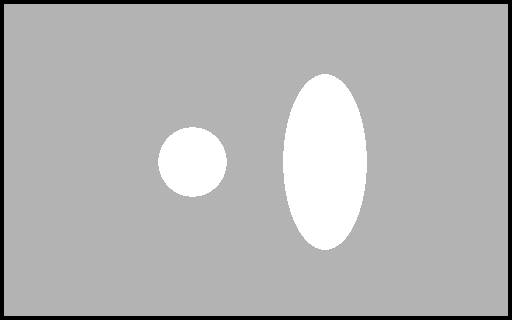}}
     \hfill
     \raisebox{-0.5\height}{\includegraphics[width=0.32\textwidth]{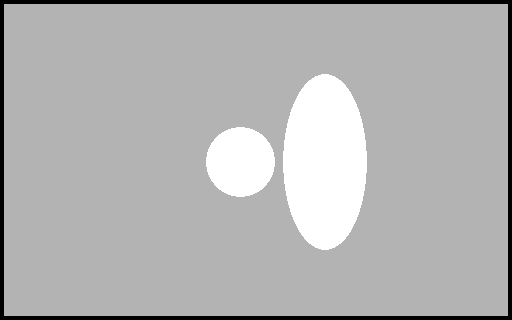}}
     \hfill
     \raisebox{-0.5\height}{\includegraphics[width=0.32\textwidth]{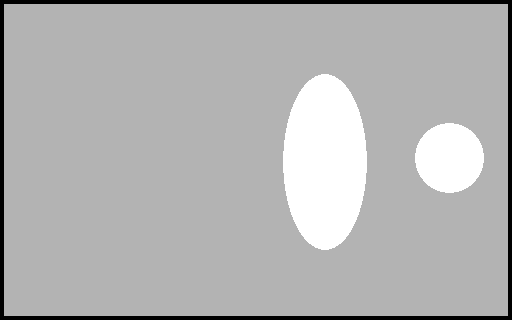}}
     \hfill \\\vspace{0.1cm}
     \hfill
     \raisebox{-0.5\height}{\includegraphics[width=0.32\textwidth]{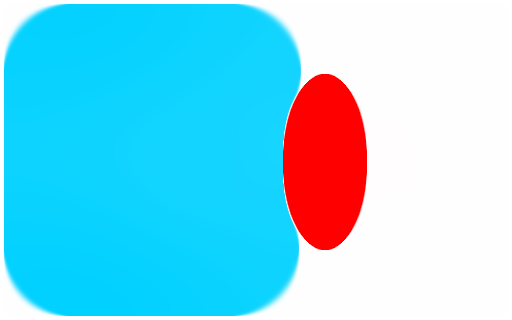}}
     \hfill
     \raisebox{-0.5\height}{\includegraphics[width=0.32\textwidth]{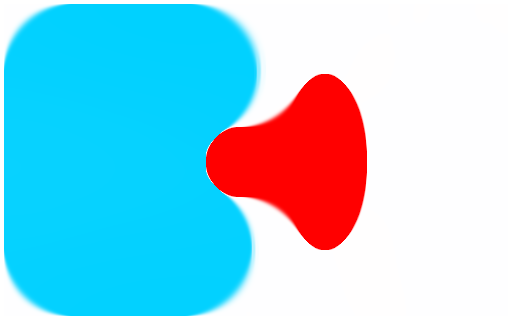}}
     \hfill
     \raisebox{-0.5\height}{\includegraphics[width=0.32\textwidth]{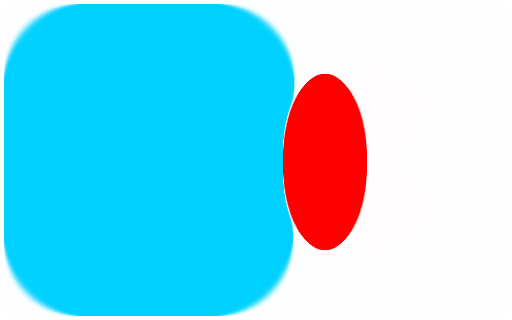}}
     \hfill
     \mbox{}
     \end{center}
\caption{Examples of minimizers of $\TV$ in $\BV_\diamond$ for several particles. The top row represents the boundary conditions. The computed minimizers are depicted below, where the blue colour represents the negative values and the red colour, the positive ones. The nonzero values are $\{-3.40, 23.7\}$, $\{-4.27, 17.1\}$ and $\{-3.21, 22.9\}$ respectively, whereas the corresponding critical yield numbers are $Y_c=0.0378$, $Y_c=0.0383$ and $Y_c=0.0396$, again when the longest side of $\Omega$ is $1$.}
\label{fig:bridges}
\end{figure}

\begin{figure}[htb]
 \begin{center}
  \subfigure{ 
  \label{fig:MultpartGeneric1}
  \includegraphics[width = 0.23 \textwidth]{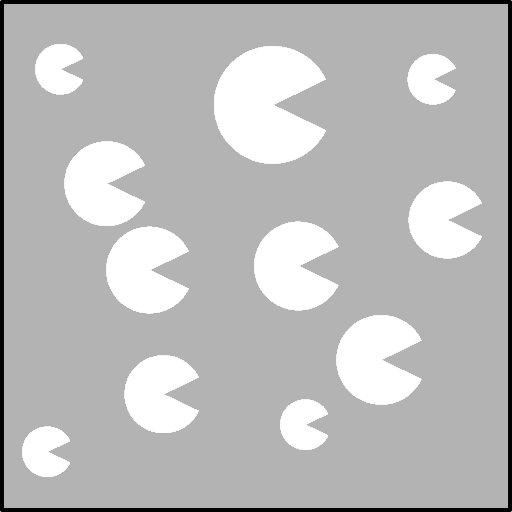}
  \includegraphics[width = 0.23 \textwidth]{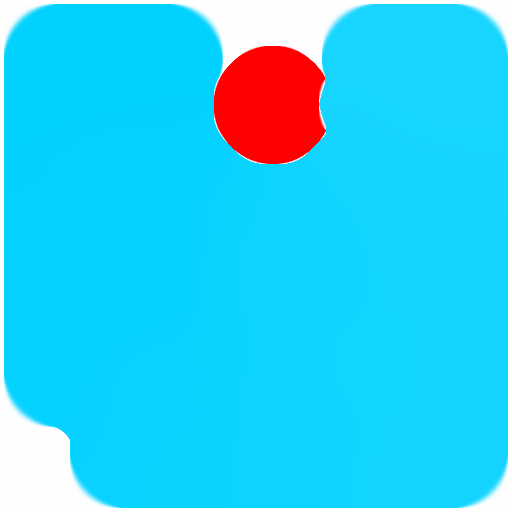}
  }
  \subfigure{ 
  \label{fig:MultpartGeneric2}
    \includegraphics[width = 0.23 \textwidth]{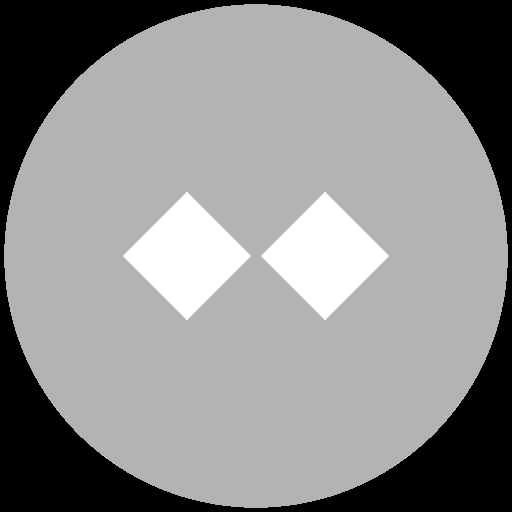}
    \includegraphics[width = 0.23 \textwidth]{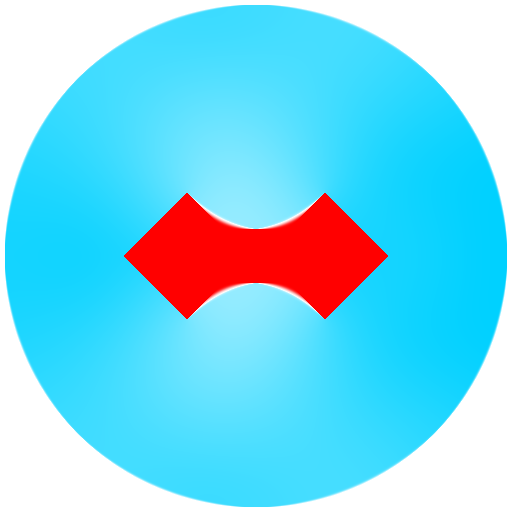}
  }
 \end{center}
\caption{Two examples of minimizing $\TV$ in $\BV_\diamond$ for several particles. Here, the nonzero values are $\{-1.78, 33.8\}$ and $\{-2.21, 16.3\}$ and $Y_c=0.0324$ (the length of a side of $\Omega$ being $1$) and $Y_c=0.0344$ (the diameter of $\Omega$ being $1$) respectively. For the left result, since the magnitude of the negative values is much smaller than that of the positive ones, their color has been rescaled.}
\label{fig:MultpartGeneric}
\end{figure}

We also give an example where uniqueness of the minimizer is not expected. In Figure \ref{fig:MultNonUniq}, we consider a grid of circular particles in a square. It is easy to see analytically that any subset of the particles can be chosen as positive part of the minimizer. We present two computations at different numerical resolutions that pick two different subsets.

Since the solutions we compute correspond to limit profiles of the original flows (Theorem \ref{thm:conv}), the results presented both here and in Section \ref{sec:3level} mean that near the stopping regime $Y \to Y_c$ the transition between yielded and unyielded regions of the fluid typically happens closer and closer to the particle boundaries and the domain boundaries. This is consistent with the Cheeger set interpretation of the buoyancy case (which was already present in \cite{FriIglMerPoeSch17}) and the many previous works on non-buoyancy cases (\cite{MosMia66, Hild2002}, for example).

\begin{figure}[htb]
 \begin{center}
 \includegraphics[width = 0.32 \textwidth]{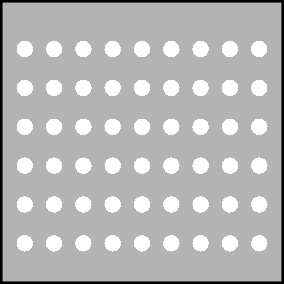}
  \includegraphics[width = 0.32 \textwidth]{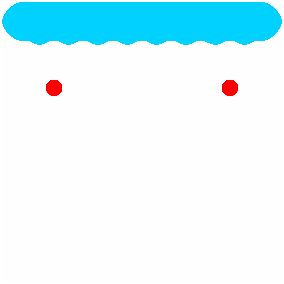}
    \includegraphics[width = 0.32 \textwidth]{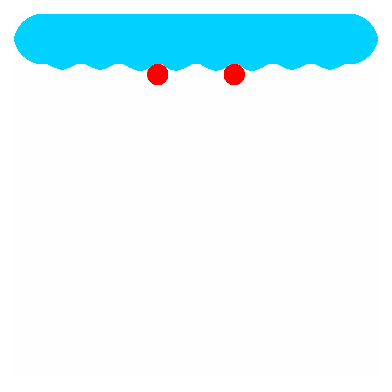}
 \end{center}
\caption{Numerical computation of a minimizer at two different resolutions when uniqueness is not expected. Here, since the magnitude of the negative values is much smaller than that of the positive ones, their color has been rescaled.}
\label{fig:MultNonUniq}
\end{figure}

\subsection{A random distribution of small particles}
We also present two examples of random distribution of square particles in a bigger square. Figure \ref{fig:randpart} shows the same number of particles distributed in two different ways and the corresponding minimizers. This example shows that the yield number depends strongly on the geometry of the problem, not only on the ratio solid/fluid. An interesting problem would be to investigate the optimal distribution to maximize/minimize this yield number.

\begin{figure}
 \begin{center}
 \includegraphics[width = 0.4 \textwidth]{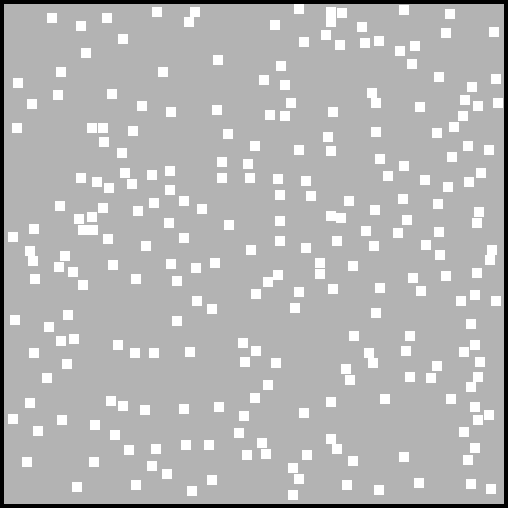}
 \hspace{.5cm}
   \includegraphics[width = 0.4 \textwidth]{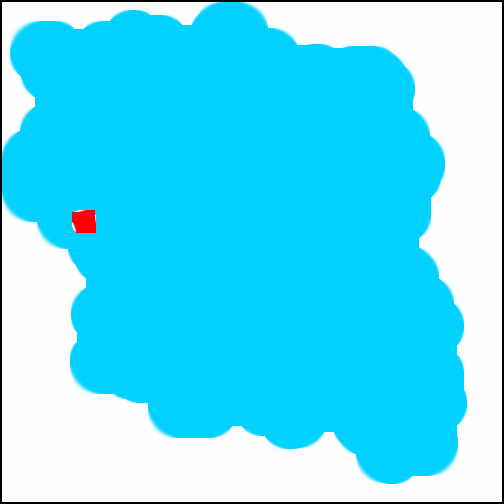}\\
   \vspace{.75cm}
 \includegraphics[width = 0.4 \textwidth]{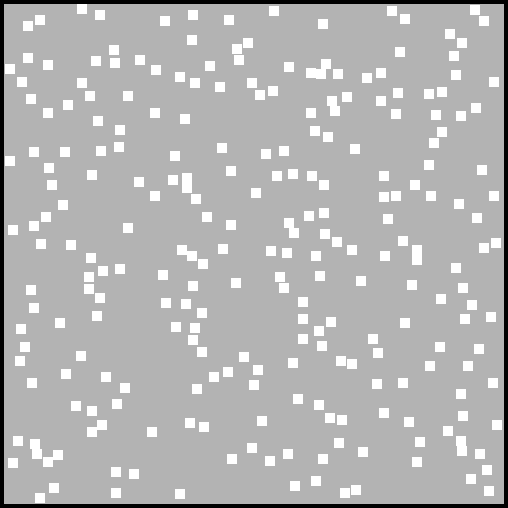} \hspace{.5cm}
  \includegraphics[width = 0.4 \textwidth]{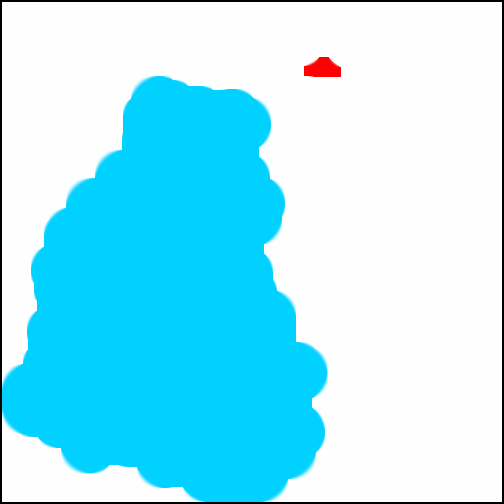}
 \end{center}
\caption{Two random distributions of the same number of particles in a square. On the left lies the distribution of particles and on the right the computed minimizer. Note that the values are $\{-2.51, 712\}$ (up) and $\{-4.21, 702\}$ (down) while $Y_c^{\text{up}} = 7.89\cdot 10^{-3}$ and $Y_c^{\text{low}} = 6.75\cdot 10^{-3}$. Here again, the side length of the domain is $1$ and the color of the negative values has been rescaled.}
\label{fig:randpart}
\end{figure}

\section*{Acknowledgments}
This research was supported by the Austrian Science Fund (FWF) through the National Research Network `Geometry+Simulation' (NFN S11704). We would like to thank Ian Frigaard (UBC) for useful discussions.

\bibliographystyle{plain}
\bibliography{IglMerSch18-final}

\begin{thebibliography}{10}

\bibitem{AmbCasMasMor01}
L.~Ambrosio, V.~Caselles, S.~Masnou, and J.-M. Morel.
\newblock Connected components of sets of finite perimeter and applications to
  image processing.
\newblock {\em J. Eur. Math. Soc. (JEMS)}, 3(1):39--92, 2001.

\bibitem{AmbFusPal00}
L.~Ambrosio, N.~Fusco, and D.~Pallara.
\newblock {\em Functions of bounded variation and free discontinuity problems}.
\newblock Oxford Mathematical Monographs. Oxford University Press, New York,
  2000.

\bibitem{AnzGia80}
G.~Anzellotti and M.~Giaquinta.
\newblock Existence of the displacement field for an elastoplastic body subject
  to {H}encky's law and von {M}ises yield condition.
\newblock {\em Manuscripta Math.}, 32(1-2):101--136, 1980.

\bibitem{BalFriOva14}
N.~J. Balmforth, I.~A. Frigaard, and G.~Ovarlez.
\newblock Yielding to stress: Recent developments in viscoplastic fluid
  mechanics.
\newblock {\em Annu. Rev. Fluid Mech}, 46(1):121--146, 2014.

\bibitem{BerTsi97}
D.~Bertsimas and J.N. Tsitsiklis.
\newblock {\em Introduction to Linear Optimization}.
\newblock Athena Scientific, 1997.

\bibitem{BogBucFra18}
B.~Bogosel, D.~Bucur, and I.~Fragal{\`a}.
\newblock Phase field approach to optimal packing problems and related cheeger
  clusters.
\newblock {\em Applied Mathematics {\&} Optimization}, Feb 2018.

\bibitem{CarComIonPey11}
G.~Carlier, M.~Comte, I.~Ionescu, and G.~Peyr{\'e}.
\newblock A projection approach to the numerical analysis of limit load
  problems.
\newblock {\em Math. Models Methods Appl. Sci.}, 21(6):1291--1316, 2011.

\bibitem{CarComPey09}
G.~Carlier, M.~Comte, and G.~Peyr\'e.
\newblock Approximation of maximal {C}heeger sets by projection.
\newblock {\em M2AN Math. Model. Numer. Anal.}, 43(1):139--150, 2009.

\bibitem{CasFacMei09}
V.~Caselles, G.~Facciolo, and E.~Meinhardt.
\newblock Anisotropic {C}heeger sets and applications.
\newblock {\em SIAM J. Imaging Sci.}, 2(4):1211--1254, 2009.

\bibitem{ChaDuvPeyPoo17}
A.~Chambolle, V.~Duval, G.~Peyr{\'e}, and C.~Poon.
\newblock Geometric properties of solutions to the total variation denoising
  problem.
\newblock {\em Inverse Prob.}, 33(1):015002, 2017.

\bibitem{ChaLevLuc11}
A.~Chambolle, S.~E. Levine, and B.~J. Lucier.
\newblock An upwind finite-difference method for total variation-based image
  smoothing.
\newblock {\em SIAM J. Imaging Sci.}, 4(1):277--299, 2011.

\bibitem{ChaPoc11}
A.~Chambolle and T.~Pock.
\newblock A first-order primal-dual algorithm for convex problems with
  applications to imaging.
\newblock {\em J. Math. Imaging Vision}, 40(1):120--145, 2011.

\bibitem{DuvLio76}
G.~Duvaut and J.-L. Lions.
\newblock {\em Inequalities in mechanics and physics}.
\newblock Springer-Verlag, Berlin-New York, 1976.
\newblock Grundlehren der Mathematischen Wissenschaften, 219.

\bibitem{Fri98}
I.~A. Frigaard.
\newblock Stratified exchange flows of two bingham fluids in an inclined slot.
\newblock {\em J. Non-Newtonian Fluid Mech.}, 78(1):61 -- 87, 1998.

\bibitem{FriIglMerPoeSch17}
I.~A. Frigaard, J.~A. Iglesias, G.~Mercier, C.~P\"oschl, and O.~Scherzer.
\newblock Critical yield numbers of rigid particles settling in {B}ingham
  fluids and {C}heeger sets.
\newblock {\em SIAM J. Appl. Math.}, 77(2):638--663, 2017.

\bibitem{FriSch98}
I.~A. Frigaard and O.~Scherzer.
\newblock Uniaxial exchange flows of two {B}ingham fluids in a cylindrical
  duct.
\newblock {\em IMA J. Appl. Math.}, 61:237--266, 1998.

\bibitem{FriSch00}
I.~A. Frigaard and O.~Scherzer.
\newblock The effects of yield stress variation in uniaxial exchange flows of
  two {B}ingham fluids in a pipe.
\newblock {\em SIAM J. Appl. Math.}, 60:1950--1976, 2000.

\bibitem{Hassani2005}
R.~Hassani, I.~R. Ionescu, and T.~Lachand-Robert.
\newblock Shape optimization and supremal minimization approaches in landslides
  modeling.
\newblock {\em Appl. Math. Optim.}, 52:349--364, 2005.

\bibitem{Hild2002}
P.~Hild, I.~R. Ionescu, T.~Lachand-Robert, and I.~Rosca.
\newblock The blocking of an inhomogeneous {B}ingham fluid. applications to
  landslides.
\newblock {\em M2AN Math. Model. Numer. Anal.}, 36:1013--1026, 2002.

\bibitem{HupHal07}
H.~E. Huppert and M.~A. Hallworth.
\newblock Bi-directional flows in constrained systems.
\newblock {\em J. Fluid Mech.}, 578:95--112, 2007.

\bibitem{IglMerSch17}
J.~A. Iglesias, G.~Mercier, and O.~Scherzer.
\newblock A note on convergence of solutions of total variation regularized
  linear inverse problems.
\newblock Preprint arXiv:1711.06495, 2017.

\bibitem{Ionescu2005}
I.~R. Ionescu and T.~Lachand-Robert.
\newblock Generalized cheeger's sets related to landslides.
\newblock {\em Calc. Var. Partial Differential Equations}, 23:227--249, 2005.

\bibitem{JosMag01}
L.~Jossic and A.~Magnin.
\newblock Drag and stability of objects in a yield stress fluid.
\newblock {\em AIChE J.}, 47:2666–--2672, 2001.

\bibitem{KawLac06}
B.~Kawohl and T.~Lachand-Robert.
\newblock Characterization of {C}heeger sets for convex subsets of the plane.
\newblock {\em Pacific J. Math.}, 225(1):103--118, 2006.

\bibitem{LeoPra16}
G.~P. Leonardi and A.~Pratelli.
\newblock On the {C}heeger sets in strips and non-convex domains.
\newblock {\em Calc. Var. Partial Differential Equations}, 55(1):Art. 15, 28p,
  2016.

\bibitem{LieLos01}
E.~H. Lieb and M.~Loss.
\newblock {\em Analysis}, volume~14 of {\em Graduate Studies in Mathematics}.
\newblock American Mathematical Society, Providence, RI, second edition, 2001.

\bibitem{MosMia65}
P.~P. Mosolov and V.~P. Miasnikov.
\newblock Variational methods in the theory of the fluidity of a
  viscous-plastic medium.
\newblock {\em J. Appl. Math. Mech.}, 29(3):545--577, 1965.

\bibitem{MosMia66}
P.~P. Mosolov and V.~P. Miasnikov.
\newblock On stagnant flow regions of a viscous-plastic medium in pipes.
\newblock {\em J. Appl. Math. Mech.}, 30(4):841--854, 1966.

\bibitem{Par11}
E.~Parini.
\newblock An introduction to the {C}heeger problem.
\newblock {\em Surv. Math. Appl.}, 6:9--21, 2011.

\bibitem{PutFri10}
A.~Putz and I.~A. Frigaard.
\newblock Creeping flow around particles in a {B}ingham fluid.
\newblock {\em J.~Non-Newt.~Fluid Mech.}, 165:263--280, 2010.

\bibitem{VinWacAga05}
G.~Vinay, A.~Wachs, and J.-F. Agassant.
\newblock Numerical simulation of non-isothermal viscoplastic waxy crude oil
  flows.
\newblock {\em J.~Non-Newt.~Fluid Mech.}, 128(2):144 -- 162, 2005.

\bibitem{Wei72}
H.~F. Weinberger.
\newblock Variational properties of steady fall in {S}tokes flow.
\newblock {\em J. Fluid Mech.}, 52(2):321--344, 1972.

\end{thebibliography}
\end{document}